\newcommand{\uhr}{\upharpoonright}
\newcommand{\concat}{^\smallfrown}
\tikzstyle{block} = [draw,minimum size=1em]
\newcommand{\RCA}{\mathsf{RCA}_0}
\newcommand{\ACA}{\mathsf{ACA}_0}
\newcommand{\WKL}{\mathsf{WKL}_0}
\newcommand{\ATR}{\mathsf{ATR}_0}
\newcommand{\DPB}{\mathsf{CD}\text{-}\mathsf{PB}}
\newcommand{\JI}{\mathsf{JI}}
\newcommand{\LwCA}{\mathsf{L_{\omega_1,\omega}}\text{-}\mathsf{CA}}
\newcommand{\DCA}{\Delta^1_1\text{-}\mathsf{CA_0}}
\newcommand{\BorelDRT}{\mathsf{Borel}\text{-}\mathsf{DRT}}
\newcommand{\ceil}[1]{\lceil #1 \rceil}
\newcommand{\makeset}[1]{|#1|}
\newcommand{\Decorate}{\operatorname{Decorate}}
\newcommand{\kO}{\mathcal O}
\newtheorem{theorem}{Theorem}[section]
\newtheorem{definition}[theorem]{Definition}
\newtheorem{proposition}[theorem]{Proposition}
\newtheorem{lemma}[theorem]{Lemma}
\newtheorem{question}[theorem]{Question}
\newtheorem*{restatement}{Restatement}
\title{The determined property of Baire in reverse math}
\author[Astor]{Eric P. Astor}
\address{Google LLC\\
111 8th Ave.\\
New York, NY 10011, U.S.A.}
\email{eric.astor@gmail.com}
\author[Dzhafarov]{Damir Dzhafarov}
\address{Department of Mathematics\\
University of Connecticut\\
Storrs, Connecticut U.S.A.}
\email{damir.dzhafarov@uconn.edu}
\author[Montalb\'an]{Antonio Montalb\'an}
\address{Department of Mathematics\\
University California-Berkeley\\
Berkeley, California U.S.A.}
\email{antonio@math.berkeley.edu}
\author[Solomon]{Reed Solomon}
\address{Department of Mathematics\\
University of Connecticut\\
Storrs, Connecticut U.S.A.}
\email{solomon@math.uconn.edu}
\author[Westrick]{Linda Brown Westrick}
\address{Department of Mathematics\\
Penn State University\\
University Park, Pennsylvania U.S.A.}
\email{westrick@psu.edu}
\thanks{Dzhafarov was supported by grant DMS-1400267 from the National Science Foundation of the United States and a Collaboration Grant for Mathematicians from the Simons Foundation.}
\begin{document}

\begin{abstract}
We define the notion of a completely determined Borel code in reverse 
mathematics, and consider the principle $\DPB$, which states 
that every completely determined Borel set has the property of Baire. 
We show that this principle is strictly weaker than 
$\ATR$. Any $\omega$-model of $\DPB$ must be closed under 
hyperarithmetic reduction, but $\DPB$ is not a theory 
of hyperarithmetic analysis.  We show that whenever 
$M\subseteq 2^\omega$ is the second-order part of an 
$\omega$-model of $\DPB$, then for every $Z \in M$,
there is a $G \in M$ such that $G$ is $\Delta^1_1$-generic 
relative to $Z$.
\end{abstract}

\maketitle

\section{Introduction}

The program of reverse mathematics aims to quantify the 
strength of the various axioms and 
theorems of ordinary mathematics by assuming only a 
weak base theory ($\RCA$) and then determining which 
axioms and theorems can prove which others over that
weak base.  Five robust systems emerged, 
(in order of strength,
$\RCA, \WKL, \ACA, \ATR, \Pi^1_1\text{-}\mathsf{CA}_0$)
with most 
theorems of ordinary mathematics being equivalent 
to one of these five (earning this group the moniker ``the big five'').  
The standard reference is 
\cite{sosa}.  In recent decades, most work in reverse mathematics
has focused on the theorems that 
do not belong to the big five but are
in the vicinity of $\ACA$.  
Here we discuss two principles which are 
outside of the big five and
located in the general vicinity of $\ATR$:
the \emph{property of Baire for completely determined Borel sets}
($\DPB$) and the \emph{Borel dual Ramsey theorem for 3 partitions 
and $\ell$ colors} ($\BorelDRT^3_\ell$). Both 
principles involve Borel sets.  

Our motivation is to make it possible
to give a meaningful reverse mathematics analysis of theorems
whose statements involve Borel sets.
The way that Borel sets are usually 
defined in reverse mathematics forces many theorems that even mention 
a Borel set to imply $\ATR$, in an unsatisfactory 
sense made precise in 
\cite{DFSW}.  Here we propose another
definition for a Borel set in reverse mathematics, distinguished
from the original by the terminology 
\emph{completely determined Borel set}, and to put bounds on the 
strength of the statement
$$\DPB:  \text{``Every completely determined Borel set has the property of Baire''}$$
This statement should be compared with the usual
``Every Borel set has the property of Baire'', which \cite{DFSW} 
showed is 
equivalent to $\ATR$ for aforementioned empty reasons.
In contrast, working with $\DPB$ requires working with 
hyperarithmetic generics, giving this theorem more thematic 
content.  While we do not claim that $\DPB$ is the ``right'' 
formalization of the principle that Borel sets have the 
Baire property, it is a step in that direction.

We show that over $\RCA$, $\DPB$ is implied by $\ATR$ and
implies $\LwCA$. Our first main theorems say that both
implications are strict.
\begin{theorem}\label{thm:01}
  There is an $\omega$-model of $\DPB$ in which $\ATR$ fails.
\end{theorem}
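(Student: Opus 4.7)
The plan is to construct a countable $\omega$-model $M$ by iteratively adjoining $\Delta^1_1$-generics and closing under hyperarithmetic reducibility, so that $\DPB$ is witnessed via these generics while $\ATR$ fails because no hyperjump is ever added. Let $M_0$ be the collection of hyperarithmetic reals. Having built $M_n$, use a finite-extension argument to produce a real $G_{n+1}$ that is $\Delta^1_1$-generic over every element of $M_n$, and let $M_{n+1}$ be the hyperarithmetic closure of $M_n \cup \{G_{n+1}\}$. Take $M = \bigcup_n M_n$; this is a countable $\omega$-model of $\RCA$, closed under $\leq_{\mathrm{HYP}}$.

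To verify $M \models \DPB$, fix a completely determined Borel code $B \in M$ and choose $n$ with $B \in M_n$; then $G_{n+1}$ is $\Delta^1_1$-generic relative to $B$. Let $U = \bigcup\{[\sigma] : \sigma \Vdash_B x \in B\}$, where $\Vdash_B$ denotes Cohen forcing relative to $B$. A standard category argument shows that $B \triangle U$ is meager, so $U$ witnesses the property of Baire for $B$. On the completely determined hypothesis, the forcing relation $\sigma \Vdash_B x \in B$ is hyperarithmetic in $B$, so $U \leq_{\mathrm{HYP}} B$ and $U \in M_n \subseteq M$.

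To verify $M \not\models \ATR$, use the standard characterization that $\omega$-models of $\ATR$ are exactly those closed under the hyperjump $Z \mapsto \kO^Z$. Every $Z \in M$ is hyperarithmetic in a finite join of the $G_k$'s, and iteratively $\Delta^1_1$-generic reals preserve $\omega_1^{\mathrm{CK}}$: one shows inductively $\omega_1^{\mathrm{CK},\, G_1 \oplus \cdots \oplus G_n} = \omega_1^{\mathrm{CK}}$ for every $n$. Hence $\kO^\emptyset$ is not hyperarithmetic in any such join, so $\kO^\emptyset \notin M$, showing $M$ is not closed under hyperjump.

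The main obstacle is the claim in Step 2 that on a completely determined Borel code, Cohen forcing is hyperarithmetic in the code. This is where the hypothesis of complete determination must be unpacked: it should give a coherent, $\Delta^1_1$-in-$B$ way of deciding $\sigma \Vdash_B x \in B$, via an effective recursion on the structure of the Borel code. The remaining ingredients — the preservation of $\omega_1^{\mathrm{CK}}$ by $\Delta^1_1$-generics and the hyperjump characterization of $\omega$-models of $\ATR$ — are standard.
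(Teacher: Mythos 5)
The central claim of your Step 2 --- that for a completely determined Borel code $B$ in the model, the forcing relation $\sigma \Vdash_B x \in B$ is hyperarithmetic in $B$ ``via an effective recursion on the structure of the Borel code'' --- is precisely the point where the proof breaks down. Your model $M$ is not a $\beta$-model (by your own Step 3 it omits $\kO$), so it contains ill-founded trees that it believes are well-founded; in particular a completely determined Borel code in $M$ may be genuinely ill-founded, and then there is no recursion ``on the structure of the code'' to perform. Complete determinacy supplies an evaluation map for each individual $X\in M$, not a transfinite recursion along the tree; and for a genuinely well-founded code the recursion you describe does work and puts the Baire code in $HYP(B)\subseteq M$ --- but that easy case is already handled by $\LwCA$ alone, and is explicitly not where the content of the theorem lies. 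The paper's proof replaces your recursion by \emph{polling the generics}: it sets $U^i_\sigma = \{p : \forall q \succeq p,\ q\concat G_i \in \makeset{T_\sigma}\}$, uses $\LwCA$ inside the model to see that this family is $\Delta^1_1(G_i)$ uniformly in $\sigma$, and then uses mutual $\Sigma^1_1$-genericity (Proposition \ref{prop:9}: $\Delta^1_1(G_i)\cap\Delta^1_1(G_j)=\Delta^1_1$) to conclude that the entire Baire decomposition is $\Delta^1_1$ in the parameters of the code, hence in $M$; an arithmetic transfinite \emph{induction} (not recursion), valid in $M$ because $M$ thinks the code is well-founded, then verifies that the decomposition is correct. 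Your set $U$ is morally the same object, but you have given no valid argument that it (or the accompanying dense sets $D_n$, which you never construct) lies in $M$.

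A second, independent problem is Step 3's appeal to ``the standard characterization that $\omega$-models of $\ATR$ are exactly those closed under the hyperjump.'' That characterization is false: closure under hyperjump is the condition associated with $\Pi^1_1\text{-}\mathsf{CA}_0$, which is strictly stronger than $\ATR$ even for $\beta$-models, so there are ($\beta$-)models of $\ATR$ that are not closed under hyperjump, and your inference ``not hyperjump-closed, hence not a model of $\ATR$'' is invalid. The failure of $\ATR$ in $M$ has to be exhibited directly: the paper takes a pseudo-ordinal $a^\ast \in \kO^\ast\setminus\kO$, shows $M$ believes the associated linear order is well-founded (no descending sequence is hyperarithmetic in finitely many mutually generic reals, by a genericity/comeagerness argument), and shows no jump hierarchy on it can lie in $M$ for the same reason. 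Your computation that the generics preserve $\omega_1^{ck}$ is a genuine ingredient of that argument, but it does not by itself yield $M \not\models \ATR$.
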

\begin{theorem}\label{thm:01a}
There is an $\omega$-model of $\LwCA$ in which $\DPB$ fails. 
In fact, $HYP$ is such an $\omega$-model.
\end{theorem}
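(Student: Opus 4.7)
The theorem has two parts. The first, that $HYP$ is an $\omega$-model of $\LwCA$, is classical: $HYP$ is well known to be the minimum $\omega$-model of several theories of hyperarithmetic analysis, and $\LwCA$ is among them. Standard arguments (in the style of Van Wesep, Steel, and Simpson) show that $HYP$ is closed under the comprehension operations required by $\LwCA$, so I would simply invoke these known results.

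The substance of the theorem lies in showing $HYP \not\models \DPB$. My plan is to exhibit a completely determined Borel code $B$ coded in $HYP$ such that no pair $(U, M) \in HYP$ can witness the Baire property of $B$. The key observation is that $HYP$ contains no $\Delta^1_1$-generic real: any hyperarithmetic real $G$ fails the $\Delta^1_1$ dense set of conditions $\{\sigma \in 2^{<\omega} : \sigma \not\prec G\}$. So it suffices to show that a BP witness in $HYP$ for a carefully chosen $B$ would produce such a generic. This follows a ``mini version'' of the paper's main theorem on $\Delta^1_1$-generics in models of $\DPB$.

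For the specific construction, I would build $B$ from a hyperarithmetic enumeration of $\Sigma^1_1$-style density requirements, combined into a completely determined Borel code. Given a purported BP witness $(U, M)$ in $HYP$, I would locate a basic open neighborhood $[\sigma] \subseteq U$ on which the Borel set $B$ is comeager (using that BP forces this local dichotomy) and then extract, within $HYP$, a hyperarithmetic real in $[\sigma] \setminus M$ meeting all the encoded density requirements, i.e., a $\Delta^1_1$-generic. The existence of such a real contradicts the fact that $HYP$ has no $\Delta^1_1$-generics.

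The main obstacle is designing $B$ so that complete determination is preserved (membership being defined for every real, not just hyperarithmetic ones) while simultaneously forcing BP witnesses to encode genericity. This requires delicate combinatorial bookkeeping on the Borel code tree, and essentially reduces to the general result on $\Delta^1_1$-generics in $\omega$-models of $\DPB$; in effect, Theorem~\ref{thm:01a} is the $HYP$-shaped shadow of that later theorem.
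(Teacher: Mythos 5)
Your high-level strategy is sound and is in fact the strategy of the paper's Theorem \ref{thm:02} (Section 6), of which the paper explicitly treats Theorem \ref{thm:01a} as a warm-up: since $HYP$ contains no $\Delta^1_1$-generic, it suffices to show that $\DPB$ would force one to appear. The first half (that $HYP\models\LwCA$) is indeed just the statement that $\LwCA$ is a statement of hyperarithmetic analysis, so invoking known results is fine there. But the second half of your proposal has a genuine gap, and it is the entire technical content of the theorem. The missing idea is that the counterexample code $B$ \emph{cannot} be a truly well-founded hyperarithmetic Borel code: any such code has a Baire decomposition hyperarithmetic in itself (built by effective transfinite recursion along its actual rank), and since $HYP$ is closed under hyperarithmetic reduction, that witness lies in $HYP$. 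So no ``hyperarithmetic enumeration of density requirements combined into a completely determined Borel code'' can work as literally described --- indeed the family of all $\Delta^1_1$ dense open sets is not hyperarithmetically enumerable. The paper instead takes a pseudo-ordinal $a^\ast\in\kO^\ast\setminus\kO$, which $HYP$ believes is well-founded, and builds an \emph{ill-founded} code $E_{a^\ast}$ stacking up universal $\Sigma^0_b$ sets for all $b<_\ast a^\ast$; your proposal never confronts the need for non-well-founded codes, and without them the construction fails at the start.

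The second gap is the one you flag but do not solve: making the ill-founded code completely determined on all of $2^\omega$ as seen by $HYP$. This is not routine bookkeeping; it is the $\Decorate$ machinery of Section 4, which grafts onto every node of sufficiently high rank a copy of a code for the ``slice'' $\makeset{S_b}=\{X: X\le_T H_b,\ X\not\le_T H_c\ \forall c<_\ast b\}$, so that each hyperarithmetic $X$ acquires an $H_{b+O(1)}^X$-computable evaluation map (Lemma \ref{lem:2}) while the comeager behavior of the set is untouched. Finally, note that the paper's actual proof of this theorem does not extract a $\Delta^1_1$-generic from the Baire witness at all: a Baire approximation in $HYP$ is $\le_T H_b$ for some single $b\in\kO$, and meeting its countably many dense sets only yields $b$-genericity, not $\Delta^1_1$-genericity. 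The paper instead applies the recursion theorem to choose an index $e$ with $W_e^{H_b}=\{p: 0^e1\concat p\in V\}$ and diagonalizes, producing an $X\le_T H_b$ with $X\in U\iff X\in V$. Your generic-extraction route can be made to work (it is carried out in Section 6 via the Baire \emph{decomposition} and the dense sets $D_{\ell,p}$), but it requires the full strength of that longer argument, so the reduction you gesture at is not a simplification.
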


This establishes that $\DPB$ is located in the general 
vicinity of the
\emph{theories of hyperarithmetic analysis},
a mostly linearly ordered collection of logical 
principles which are strong enough to support 
hyperarithmetic reduction, but too weak to imply 
the existence of jump hierarchies.  With the 
exception of Jullien's indecomposability theorem 
\cite{Montalban2006}, no
theorems of ordinary mathematics are known to exist in 
this space.  The only known statement of 
hyperarithmetic analysis that is not linearly ordered 
with the others is the arithmetic Bolzano-Weierstrass 
theorem (see \cite{Friedman1975}, \cite{Conidis2012}).
Now, $\DPB$ is not a theory of hyperarithmetic 
analysis because it does not hold in $HYP$. However these 
theories of hyperarithmetic analysis are 
the closest principles to $\DPB$ that have already 
been studied.

  \begin{center}
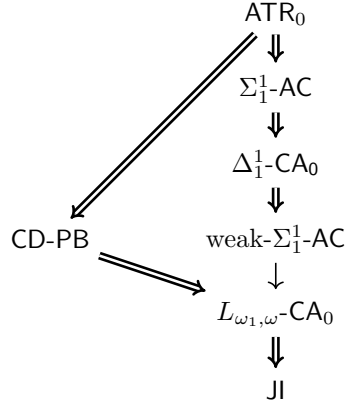
\begin{figure}
  \begin{tikzpicture}[->,shorten >=1pt,auto,node distance=1cm,
                    semithick]

  \node (A)                    {$\ATR$};
  \node (B) [below of=A]       {$\Sigma^1_1\text{-}\mathsf{AC}$};
  \node (C) [below of=B]       {$\Delta^1_1\text{-}\mathsf{CA_0}$};
  \node (D) [below of=C]       {weak-$\Sigma^1_1\text{-}\mathsf{AC}$};
  \node (E) [below of=D]       {$L_{\omega_1,\omega}\text{-}\mathsf{CA}_0$};
  \node (F) [below of=E]       {$\JI$};
  \node (G) at (-3,-3)        {$\DPB$};

  \draw[-Implies, line width=1pt,double distance=1pt] (A)-- (B);
  \draw[-Implies, line width=1pt,double distance=1pt] (B)-- (C);
  \draw[-Implies, line width=1pt,double distance=1pt] (C)-- (D);
  \draw[->] (D)-- (E);
  \draw[-Implies, line width=1pt,double distance=1pt] (E)-- (F);
  \draw[-Implies, line width=1pt,double distance=1pt] (A)-- (G);
  \draw[-Implies, line width=1pt,double distance=1pt] (G)-- (E);

\end{tikzpicture}
\caption{$\DPB$, $\ATR$, and some theories of hyperarithmetic analysis.  The new results are those concerning $\DPB$.  A double arrow indicates a 
strict implication.}\label{fig:1}
\end{figure}
\end{center}

To elaborate on the factors preventing to $\DPB$ from 
being a theory of hyperarithmetic analysis, we prove 
the following generalization of Theorem \ref{thm:01a}
above, establishing that hyperarithmetic generics must
appear in any $\omega$-model of $\DPB$.

\begin{theorem}\label{thm:02}
If $\mathcal M$ is an $\omega$-model of $\DPB$, then for any 
$Z \in M$, there is a $G \in M$ that is $\Delta^1_1(Z)$-generic.
\end{theorem}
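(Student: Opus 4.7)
The plan is to use $\DPB$ inside $\mathcal{M}$ to extract a real $G \in M$ that is externally $\Delta^1_1(Z)$-generic. Fix $Z \in M$. We seek a completely determined Borel code $B \in M$ for a set $A$ with two properties: $A$ is meager, and every non-$\Delta^1_1(Z)$-generic $G \in M$ lies in $A$. Granting such $B$, apply $\DPB$ to obtain codes in $M$ for an open $U$ and a meager $F_\sigma$ set $N = \bigcup_n N_n$ (each $N_n$ nowhere dense), with $A = U \triangle N$. Because $A$ is meager, the open set $U = A \triangle N$ is also meager and hence empty, so $A \subseteq N$. From the code of $N$, a standard effective Baire category argument available in $\RCA$ produces a real $G$ avoiding every $N_n$; since this construction is computable from the code, $G \in M$, and $G \notin N \supseteq A$ forces $G$ to be $\Delta^1_1(Z)$-generic.

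The construction of $B$ exploits that $\DPB$ implies $\LwCA$, so $M$ is closed under hyperarithmetic reduction relative to $Z$. In particular, every $\Delta^1_1(Z)$ dense open set $D$ has its open code in $M$, together with an ordinal notation $e \in \kO^Z$ that $M$ can verify as well-founded (since a $\Delta^1_1$ evaluation procedure for $D$ itself witnesses well-foundedness of the associated notation). Let $T \in M$ be the tree that $M$ regards as a well-founded Borel code for $A = \bigcup_{e \in O}(2^\omega \setminus D_e)$, where $O$ is $M$'s internal collection of such recognized notations. The code is completely determined at every $X \in M$ because the stepwise Borel evaluation is $\Delta^1_1$ in $X \oplus Z$ and therefore lies in $M$ by $\LwCA$; each $N_n$ and the corresponding meagerness witness are then also hyperarithmetic and thus inside $M$.

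The main obstacle is bridging the internal and external viewpoints: we must ensure that if $G \in M$ is not $\Delta^1_1(Z)$-generic then $G$ really lies in $A = |B|$ as evaluated externally, and not merely in $M$'s internal version of the set. If $G \in M$ fails genericity, it witnesses failure of some genuine $\Delta^1_1(Z)$ dense open set $D$; by $\LwCA$ applied inside $M$ to $Z \oplus G$, an explicit notation $e$ for $D$ lies in $M$ and is internally recognized as well-founded, so $e \in O$ and $G \in 2^\omega \setminus D_e \subseteq A$. Making precise this recovery of an internal notation from the external failure---and verifying that $M$'s internal $\kO^Z$ is rich enough to index every $\Delta^1_1(Z)$ dense open set relevant to elements of $M$---is where I expect the argument to be most delicate and to depend most heavily on the specific formalization of the completely determined Borel code.
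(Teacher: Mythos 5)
Your high-level strategy---code a meager set $A$ containing every non-$\Delta^1_1(Z)$-generic, apply $\DPB$, and then meet the resulting dense sets to produce a generic in $M$---is the same strategy the paper uses, but the proposal has a genuine gap exactly at the point you flag as ``delicate,'' and the gap is not a matter of bookkeeping. The index set $O$ of notations for $\Delta^1_1(Z)$ dense open sets is $\Pi^1_1(Z)$-complete, so it is not an element of $M$ (an $\omega$-model of $\DPB$ need not satisfy $\Pi^1_1$-$\mathsf{CA}$), and you cannot form the union $\bigcup_{e\in O}(2^\omega\setminus D_e)$ as a single code in $M$. Worse, there is a structural obstruction to any repair that keeps the code truly well-founded: a genuinely well-founded $\Delta^1_1(Z)$ Borel code has a Baire approximation that is itself $\Delta^1_1(Z)$, so your construction would output a $G\in\Delta^1_1(Z)$, and no such $G$ is $\Delta^1_1(Z)$-generic (it fails to meet the dense set $\{p: p\not\prec G\}$). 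So the witness code must be \emph{ill-founded} while $M$ believes it well-founded. The paper gets this by splitting into cases: if $\mathcal M$ is a $\beta$-model the theorem follows directly from the $\Sigma^1_1(Z)$-ness of ``$X$ is $\Delta^1_1(Z)$-generic'' (a case your argument does not address), and otherwise it extracts a pseudo-ordinal $a^\ast\in\kO^{\ast,Z}\setminus\kO^Z$ from a tree $M$ wrongly believes well-founded and indexes the union by the $Z$-c.e.\ set $\{b: b<_\ast a^\ast\}$, which properly overshoots $\kO^Z$.

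Once the code is ill-founded, your justification of complete determinacy (``the stepwise Borel evaluation is $\Delta^1_1$ in $X\oplus Z$'') collapses: evaluating the root of an ill-founded union over $\{b : b<_\ast a^\ast\}$ would require something like a jump hierarchy along the ill-founded order, which does not exist. This is precisely what the $\Decorate$ machinery of Section 4 is for: it splices shortcut subtrees $P_b, N_b$ (built from the first level $b$ at which $X$ fails $1$-genericity relative to $H_b^Z$) into every node of high rank, so that each non-$\Delta^1_1(Z)$-generic $X\in M$ acquires an $H_{b+O(1)}^{X\oplus Z}$-computable evaluation map that bypasses the ill-founded part (Lemma \ref{lem:2}), while $\Delta^1_1(Z)$-generics either witness non-determinacy (and are then already the desired $G$) or are handled by the Baire decomposition. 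Your proposal also relies only on the root-level Baire approximation, whereas the paper needs the full Baire decomposition of Proposition \ref{prop:noextra} to argue, subtree by subtree, that each genuine $\Delta^1_1(Z)$ dense open set contains one of the canonical dense sets $D_{\ell,p}$. In short: the skeleton is right, but the two essential ideas---overshooting $\kO^Z$ with a pseudo-ordinal (plus the separate $\beta$-model case) and decorating the resulting ill-founded code to force complete determinacy---are missing, and without them the witness set cannot even be written down inside $M$.
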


As an application, we use 
$\DPB$ to analyze the theorem $\BorelDRT^3_\ell$,
whose statement contains no concept of mathematical logic
apart from that of Borel sets.
(The statement of this theorem can be found in 
Section \ref{sec:bdrt}.)  We show that, under appropriate 
formalization, $\BorelDRT^3_\ell$ is strictly weaker than 
$\ATR$ and shares some properties with the theories of
hyperarithmetic analysis.  It is left open whether 
$\BorelDRT^3_\ell$ is a statement of hyperarithmetic
analysis.

\begin{theorem}
For any finite $\ell \geq 2$, 
the principle $\BorelDRT^3_\ell$ is strictly implied by 
$\ATR$.  Any $\omega$-model of $\BorelDRT^3_\ell$ 
is closed under hyperarithmetic reduction.
\end{theorem}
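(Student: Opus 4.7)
The plan has three ingredients: a formalization of the classical proof inside $\ATR$, a reduction from $\DPB$ that uses Theorem \ref{thm:01} to get strictness, and a coding argument for hyperarithmetic closure.

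First, I would prove $\ATR \vdash \BorelDRT^3_\ell$ by formalizing the classical Carlson--Simpson proof of the Borel dual Ramsey theorem inside $\ATR$. This system fully evaluates ordinary Borel codes, supports the Galvin--Prikry / Ellentuck-style Baire category argument on the space of partitions, and carries out the transfinite induction on Borel rank underlying the proof. Since every completely determined Borel code is in particular an ordinary Borel code, $\BorelDRT^3_\ell$ follows.

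For the strictness, I would aim to show $\DPB \vdash \BorelDRT^3_\ell$, whereupon Theorem \ref{thm:01} yields an $\omega$-model of $\BorelDRT^3_\ell$ in which $\ATR$ fails. The strategy is to mirror the Carlson--Simpson argument using only completely determined Borel sets: every step that invokes the Baire property of a Borel set is instead applied to a completely determined set, which is legitimate by $\DPB$. In the typical inductive refinement, given a pure condition one passes to a subcondition on which the coloring is constant on a comeager set, and the required subcondition is extracted from the property-of-Baire decomposition.

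For hyperarithmetic closure of $\omega$-models, given $\mathcal M \models \BorelDRT^3_\ell$ and $Z \in M$, my plan is to show by coding that every set hyperarithmetic in $Z$ lies in $M$. For an appropriate sequence of $Z$-computable ordinals $\alpha$, I would build a $Z$-computable coloring $c_\alpha$ of the space of partitions of $\omega$ into three blocks, presented by a completely determined Borel code, so that any monochromatic infinite partition uniformly computes the $\alpha$-th jump of $Z$. The coloring would be arranged so that homogeneity forces consistency with the jump hierarchy, and complete determination is ensured because evaluation converges along any $Z$-hyperarithmetic path.

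The principal obstacle I expect is in the second step: one must verify that every Borel set appearing in the Carlson--Simpson argument --- particularly those obtained by countable unions, intersections, or projections along pure subspaces --- remains completely determined rather than merely Borel. This is where the two notions of Borelness diverge most sharply, and some bookkeeping is required, likely by strengthening the inductive hypothesis to maintain complete determination throughout. The coding in the third step is the next most delicate point: extracting the full hyperarithmetic content, rather than a jump-bounded approximation, from a single monochromatic partition may require iterating the construction along the hyperarithmetic hierarchy and arranging compatibility of successive colorings.
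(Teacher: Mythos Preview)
Your overall structure is right, but the paper's route to the forward implication and strictness is considerably simpler than the one you propose, and it sidesteps exactly the ``principal obstacle'' you flag.

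Rather than formalizing Carlson--Simpson inside $\ATR$ (or inside $\DPB$), the paper invokes a known two-step decomposition of the Borel dual Ramsey theorem from \cite{ProemelVoigt1985, DFSW}: (i) find a Baire approximation to each color, and (ii) apply a purely combinatorial principle $\mathsf{CDRT}^3_\ell$ to a derived coloring of $2^{<\omega}$. Step (i) needs only $\DPB$ once the $T_i$ are replaced by completely determined codes (a short lemma handles this: intersect each leaf with the open set $P_3$). Step (ii) is known to follow from Hindman's Theorem, hence from $\ACA^+$. So the paper obtains $\DPB + \ACA^+ \vdash \BorelDRT^3_\ell$, not $\DPB \vdash \BorelDRT^3_\ell$ as you propose. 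This matters for your strictness argument: the $\omega$-model of Theorem \ref{thm:01} satisfies $\ACA^+$ automatically (being closed under hyperarithmetic reduction), so strictness still follows, but you should not claim the unadorned implication from $\DPB$. More importantly, this route never touches the internals of Carlson--Simpson, so the worry about preserving complete determination through countable unions and projections along pure subspaces simply does not arise.

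For hyperarithmetic closure, your sketch is in the right spirit but misses the key mechanism. The paper does not code the jump hierarchy directly into consistency conditions; instead it uses Jockusch's characterization of the hyperarithmetic sets as those computable from every sufficiently fast-growing function. Concretely, one first proves $\ACA$ by a coloring whose homogeneous partitions must dominate the settling-time function of $A'$. Then, given a limit notation $3\cdot 5^e \in \kO^A$ with all $H^A_{d_i}$ already in $M$, one builds a coloring $R = \bigcup_{0<a<b} \bigcap_{i\leq a} C_{a,b,d_i}$ where $C_{a,b,d_i}$ is $O_{a,b}$ or $\emptyset$ according to whether $b \geq f_{d_i}(a)$, for uniformly chosen dominating functions $f_d$. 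Complete determination holds because for each $X$ only one $O_{a,b}$ is relevant and the corresponding finite intersection has genuinely well-founded rank $d_a + O(1)$, so its evaluation map is computable from $H^A_{d_a + O(1)} \in M$. Any homogeneous partition then has principal function eventually dominating every $f_{d_i}$, and Jockusch's uniformity gives $H^A_{3\cdot 5^e}$ from $A$ together with such a function. This domination-based coding is the idea your third step is reaching for.
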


The first section gives the preliminaries.  In Section 2 
we give the definition of a completely determined Borel code 
and prove its basic properties.  In Section 3 we construct 
an $\omega$-model to separate $\DPB$ from $\ATR$. 
In Section 4 we develop the machinery of \emph{decorating 
trees} which will be used in Sections 5 and 6.  In 
Section 5, we prove that $\DPB$ does not hold in $HYP$. 
In Section 6, we prove Theorem \ref{thm:02}.  This is a 
strictly stronger theorem than the one proved in Section 5, 
but also a bit longer to prove, so Section 5 could be regarded 
as a warm-up.  In Section 7 we give an application to 
the Borel dual Ramsey theorem.
Section 8 contains open questions.

The authors would like to thank Julia Knight and Jindra Zapletal for 
helpful discussions on this topic, and the anonymous referee for 
many suggestions which have made the arguments clearer 
and more accessible.

\section{Preliminaries}
\subsection{Notation, Borel sets and Borel codes}

We typically denote elements of $\omega^{<\omega}$ by $\sigma, \tau$ 
and elements of $2^{<\omega}$ by $p,q$.  We write $\sigma\preceq \tau$ to 
indicate that $\sigma$ is an initial segment of $\tau$, with $\prec$ 
if $\sigma\neq \tau$.    We may also use this notation to indicate when 
a finite string is an initial segment of an infinite string.
For $p \in 2^{<\omega}$, the notation $[p]$ refers to the set 
$\{X \in 2^\omega : p \prec X\}$.
The empty 
string is denoted by $\lambda$.  A string with a single component 
of value $n\in\omega$ is denoted by $\langle n \rangle$.  String 
concatenation is denoted by $\sigma \concat \tau$.  Usually 
we write $\sigma \concat n$ instead of the more technically correct 
but uglier $\sigma \concat \langle n \rangle$.  

If $U$ is a set of strings (for example, a tree, or a coded 
open subset of $2^\omega$), and $\sigma$ is any string, we write 
$\sigma \concat U$ to mean $\{\sigma \concat  \tau : \tau \in U\}$.
If $T$ is a tree and $\sigma \in T$, we write
$T_\sigma$ to mean $\{\tau : \sigma \concat \tau \in T\}$.

The \emph{Borel} subsets of a topological space are the smallest
collection which contains the open sets
and is closed under
complements and countable unions (and thus countable intersections).

A \emph{Borel code} is a well-founded tree 
$T\subseteq \omega^{<\omega}$ whose leaves are labeled 
by basic open sets or their complements, and whose inner nodes
are labeled by $\cup$ or $\cap$.  The Borel set associated to a 
Borel code is defined by induction, 
interpreting the labels in the obvious way.
Any Borel set 
can be represented this way, by applying DeMorgan's laws to push
any complementation out to the leaves.

We use standard recursion-theoretic notation.  The $e$th Turing 
functional is denoted $\Phi_e$.  A pair of natural numbers $(n,m)$ is 
coded as a single natural number $\langle n,m\rangle$ via 
a canonical computable bijection between $\mathbb N$ and $\mathbb N\times \mathbb N$.
Although this notation $\langle n, m\rangle$ could also refer to a string 
with two elements, context will make it clear which type is meant.

\subsection{Reverse mathematics}

We assume the reader is familiar with the program of reverse
mathematics.  The standard reference on this subject is 
\cite{sosa}.  Here we just recall that the principle of 
\emph{arithmetic transfinite recursion} is formulated as follows.
If $X \in 2^\omega$ codes a linear order on some subset of 
$\mathbb N$, let $<_X$ denote
that linear order and (abusing notation) let $X$ also denote the 
domain of the linear order.  Assuming there is a linear order $X$ in the 
context, given $Y \in 2^\omega$ and $a \in X$, we 
let $Y^a$ denote $\{\langle n,b \rangle \in Y : b <_X a\}$.
Given an arithmetic predicate $\phi(n,Z)$, we
define the predicate $H_\phi(X,Y)$ as follows:
$$H_\phi(X,Y) \qquad \equiv \qquad X \text{ is a linear order and } Y = \{\langle n, a\rangle : a \in X \text{ and } \phi(n,Y^a)\}.$$
The principle $\ATR$ is a scheme ranging over arithmetic 
formulas $\phi$, which states that 
for each such $\phi$, if $X$ is a 
well-order, then there is a $Y$ such that $H_\phi (X,Y)$.  
Using $\ACA$, one can show that such $Y$ is unique.  For 
details, see \cite[Section V.2]{sosa}.

In the special case where $\phi(n,Z)$ is the jump operator, that is 
$\phi(n,Z) \equiv n \in Z'$, then any $Y$ satisfying $H_\phi(X,Y)$ 
is called a \emph{jump hierarchy} on $X$.  

The principle of \emph{effective transfinite recursion} is 
defined almost the same as $\ATR$, but using $\Delta^0_1$ formulas 
instead of arithmetical formulas.  In \cite{DFSW} it is shown that 
effective transfinite recursion also goes through in $\ACA$.  

Both $\ATR$ and effective transfinite recursion are used to 
define objects by recursion along a well-order $X$.  If we 
only want to use induction to 
\emph{verify} some arithmetic property 
of a family of objects indexed by $X$, the principle of \emph{arithmetic 
transfinite induction} is used and this principle also holds 
in $\ACA$ (\cite[Lemma V.2.1]{sosa}).

In reverse mathematics, the role of an ordinal is played simply 
by a well-founded linear order.  For certain of our constructions it is 
convenient to have a more structured well-order for which the 
operation of finding a successor is effective.  For that reason we also 
use the terminology of Kleene's $\kO$, which is briefly 
reviewed in the next section.

\subsection{Ordinal notations and pseudo-ordinals}

We assume the reader is familiar with ordinal notations
and pseudo-ordinals.  
A standard reference is \cite{sacksHRT}. 
Here we give just a brief summary of the concepts and 
techniques that we use.  Recall that $\emph{Kleene's O}$, 
denoted $\kO$, is a $\Pi^1_1$-complete subset of $\omega$
consisting of notations for all the computable ordinals, 
where 1 denotes the ordinal 0, $2^a$ denotes the
successor of the ordinal denoted by $a$, and $3\cdot 5^e$
denotes the limit of the ordinals denoted by $\Phi_e(n)$,
provided that $\Phi_e$ is total and for all $n$, $\Phi_e(n) <_\ast \Phi_e(n+1)$ 
(where $<_\ast$ is the transitive closure of
the relation defined by 
$1<_\ast x$ if $x\neq 1$, $x <_\ast 2^x$, 
and $\Phi_e(n) <_\ast 3\cdot 5^e$).  The notation $\leq_\kO$ 
refers to the relation $<_\ast$ restricted to $\kO$.

To avoid 
excessive repetition of the phrase ``denoted by'', henceforth we
will conflate ordinals with their notations.  A given ordinal 
may have many notations, but for each such notation $a$, 
$\{b \in \kO: b<_\kO a\}$ is linearly ordered by $\leq_\kO$, 
so canonical names for the ordinals below $a$ are implied 
by the choice of $a$.  We will also write 
$a+k$ for the $k$th successor of $a$ (instead of the technically 
accurate but more 
cumbersome tower of exponentials), 
and $a-k$ for its $k$th predecessor 
when this exists.  Although $a$ is technically an element of $\omega$, 
it would never make sense to add or subtract an ordinal using the usual 
addition on the natural numbers, so this should not create confusion.
Also, sometimes 
we will take a fixed but unspecified number of successors of 
$a$, and the result is denoted $a + O(1)$.

The set $\{b  \in \kO : b <_\kO a\}$ is c.e. uniformly 
in $a$, because the relation $<_\ast$ is a c.e. relation.
Throughout, we let $p$ denote the computable function such that for each 
$a \in \omega$, we have $W_{p(a)} = \{b \in \omega : b <_\ast a\}$.

The definition of $\kO$ also relativizes to any oracle $X$, producing 
a $\Pi^1_1(X)$-complete set $\kO^X$ with the same properties as above.

The basic tool for working with ordinal notations is effective transfinite 
recursion, which suffices to define a large swath 
of important constructions involving ordinal notations.  
These constructions also relativize (in reverse mathematics 
this corresponds to allowing a real parameter appear in the 
formula $\phi$).  
Here are
two examples which are used in this paper.  (Ranked formulas of $L_{\omega_1,\omega}$ are defined 
in the next subsections). 

\begin{proposition}\label{prop:Hformula}
Given an oracle $X$, an ordinal $a \in \kO^X$, and a number $x \in \omega$, 
there is an $a$-ranked formula of $L_{\omega_1,\omega}$ which holds exactly 
if $x \in H_a^X$, where $H_a^X$ denotes the unique jump hierarchy (relative to $X$) 
on the well-order
$W_{p(a)}^X$.  
\end{proposition}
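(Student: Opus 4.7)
The plan is to define the formula $\psi_{a,x}$ by effective transfinite recursion along $\kO^X$, uniformly in $a$ and $x$, so that $\psi_{a,x}$ holds (interpreted in the structure with a predicate for $X$) if and only if $x \in H_a^X$. The construction at each stage is an arithmetic manipulation of formulas built at earlier stages, so effective transfinite recursion, which (as noted in the preliminaries) goes through in $\ACA$ relative to $X$, suffices to produce the family.

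In the base case $a = 1$, the hierarchy is trivial, so $\psi_{1,x}$ can be taken to be $\bot$, which has rank $0$. For the successor case $a = 2^b$ with $b <_\kO a$, I decompose membership in $H_a^X$ along the defining recursion: $x \in H_a^X$ iff $x \in H_b^X$ (handled inductively by $\psi_{b,x}$) or $x = \langle n, b \rangle$ for some $n$ with $n \in (H_b^X)'$. Since the jump condition is $\Sigma^0_1$ relative to $H_b^X$, I express ``$n \in (H_b^X)'$'' as a countable disjunction, ranging over pairs $(s, u)$ with $u \in 2^{<\omega}$ such that $\Phi_n^u(n){\downarrow}$ in $s$ steps, of the finite Boolean combination $\bigwedge_{i<|u|,\,u(i)=1} \psi_{b,i} \wedge \bigwedge_{i<|u|,\,u(i)=0} \neg \psi_{b,i}$ asserting that $u$ is an initial segment of $H_b^X$. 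Since each $\psi_{b,i}$ has rank $b$, the resulting formula has rank $b + O(1)$, matching the rank of $2^b$. In the limit case $a = 3\cdot 5^e$, I set $\psi_{a,x} = \bigvee_n \psi_{\Phi_e(n), x}$; since $\Phi_e(n) <_\kO a$ for all $n$ and $a$ is their supremum, this formula has rank at most $a$.

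The main obstacle is keeping the rank bookkeeping in line with whatever definition of ``$a$-ranked'' is fixed in the next subsection. The successor step might produce a formula of rank $b + k$ for some small constant $k$ rather than exactly $2^b$, but this can be absorbed by applying a fixed number of extra successor operations to the notation (which leaves the ordinal unchanged). Correctness of $\psi_{a,x}$ — that it defines exactly $H_a^X$ — follows by a parallel induction along $\leq_\kO$ using the uniqueness of jump hierarchies and the recursive clauses $H_a^X \uhr b = H_b^X$ and $\langle n,b \rangle \in H_a^X \iff n \in (H_b^X)'$; this is an arithmetic property indexed by $\kO^X$, so it can be verified within $\ACA$ via arithmetic transfinite induction.
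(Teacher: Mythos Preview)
Your overall approach---effective transfinite recursion along $\kO^X$ with the natural base/successor/limit decomposition---is exactly the paper's. The issue is the rank bookkeeping at the successor step. The formula you write for $x \in H_{2^b}^X$ has the shape $\bigvee_{(s,u)} \bigwedge_{i<|u|} \pm\psi_{b,i}$ (possibly disjoined with $\psi_{b,x}$). Since each $\psi_{b,i}$ already has rank $b$ and you stack a conjunction and then a disjunction on top, the resulting tree has height at least $|b|+2$, whereas $2^b$ denotes the ordinal $|b|+1$; so this formula cannot be $2^b$-ranked as written. Your proposed fix (``applying a fixed number of extra successor operations to the notation, which leaves the ordinal unchanged'') is not correct either: successors in $\kO$ strictly increase the denoted ordinal, so there is no way to relabel the root with $2^b$.

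The paper handles this by invoking the normal form theorem: the finite conjunction $\bigwedge_{i<|u|}$ is a bounded quantifier and can be effectively absorbed into the existing alternation structure of the $\psi_{b,i}$ (distribute it over their top-level disjunctions, and recursively below), so that the successor step adds exactly one alternation and the result is genuinely $2^b$-ranked. That simplification is the one non-automatic ingredient in an otherwise routine recursion, and it is precisely what the paper's brief proof singles out.
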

 
\begin{proof}
The existence of a formula of $L_{\omega_1,\omega}$ 
defining membership in $H_a^X$ follows directly from effective 
transfinite recursion applied to the definition of $H_a^X$.
The fact that the formula can be $a$-ranked uses the normal form theorem 
for simplifying expressions involving bounded quantifiers.  These
simplifications can be carried out effectively.
\end{proof}

\begin{proposition}\label{prop:LObound}
Given an $X$-computable linear order $L$, there is a number $a$
such that $L$ is well-founded if and only if $a \in \kO^X$.  Furthermore,
if $L$ is ill-founded, any descending sequence in $W_{p(a)}^X$ uniformly
computes a descending sequence in $L$.
\end{proposition}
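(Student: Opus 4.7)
The strategy is a standard application of the recursion theorem to manufacture a single candidate $\kO^X$-notation whose $<_\ast$-structure below it mirrors the $<_L$-structure of $L$. I will describe the construction uniformly in $X$ and in an $X$-index for $L$.

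First I would use the recursion theorem to build an $X$-computable function $f$ such that, on input $n$, we have $f(n) = 3\cdot 5^{e(n)}$, where $\Phi_{e(n)}^X$ is designed to enumerate a $<_\ast$-increasing sequence of ``notations for the $L$-initial segment below $n$.'' Concretely, on input $k$, $\Phi_{e(n)}^X$ enumerates the first $k+1$ natural numbers $m_0,\dots,m_k$ that lie $<_L n$ and outputs a canonical $\kO^X$-sum $f(m_0)\oplus\cdots\oplus f(m_k)$; here $\oplus$ is a fixed associative sum operation on $\kO^X$ (defined by effective transfinite recursion) with the property that $u \oplus v$ is always $<_\ast$-strictly above both summands, and is itself in $\kO^X$ whenever $u,v$ are. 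Applying the same idea to all of $L$ (adding, say, one extra successor on top), define $a = 3\cdot 5^{e^\ast}$, so that every $f(n)$ for $n \in L$ satisfies $f(n) <_\ast a$.

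The two directions of the biconditional then separate cleanly. If $L$ is well-founded, I would argue by transfinite induction along $<_L$ that $f(n) \in \kO^X$ for every $n \in L$: the inductive hypothesis gives $f(m)\in\kO^X$ for all $m<_L n$, so each value of $\Phi_{e(n)}^X$ is a finite $\oplus$-sum of elements of $\kO^X$ and the sequence is $<_\ast$-increasing by construction, certifying $f(n) \in \kO^X$; then $a \in \kO^X$ as well. Conversely, if $n_0 >_L n_1 >_L \cdots$ is an $L$-descending sequence, then for each $i$ the element $f(n_{i+1})$ eventually appears as a summand in $\Phi_{e(n_i)}^X(k)$, so $f(n_{i+1}) <_\ast f(n_i)$, giving a $<_\ast$-descending sequence below $a$, hence $a \notin \kO^X$.

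For the \emph{furthermore} clause, I would extract a descending $L$-sequence uniformly from a $<_\ast$-descending sequence $b_0 >_\ast b_1 >_\ast \cdots$ in $W_{p(a)}^X$. Each $b_i$ lies $<_\ast$ below $a$, so by unwinding the definition of $<_\ast$ through $a = 3\cdot 5^{e^\ast}$ and then through the values $f(n) = 3\cdot 5^{e(n)}$, one can $X$-computably locate, for each $i$, an element $n_i \in L$ with $b_i \leq_\ast f(n_i)$; repeatedly taking minimal such witnesses and using the $\oplus$-sum structure one extracts an $L$-descending subsequence from $\langle b_i\rangle$. The main obstacle is designing $\oplus$ and the enumerations $\Phi_{e(n)}^X$ carefully enough that (i) every $\Phi_{e(n)}^X$ is total and strictly $<_\ast$-increasing regardless of the order type or well-foundedness of $L_{<n}$, and (ii) the syntactic unpacking in the last step actually recovers honest $L$-predecessors in a way that is computable in the descending sequence, with no appeal to well-foundedness of $L$.
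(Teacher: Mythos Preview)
Your high-level plan---use the recursion theorem to produce an order-preserving map $n\mapsto f(n)$ from $L$ into the $<_\ast$-predecessors of some candidate notation $a$---is exactly the paper's plan. But the specific enumeration you propose for $\Phi_{e(n)}^X$ does not work as written, and you yourself flag the problem without resolving it. If $n$ has only finitely many $<_L$-predecessors, then ``enumerate the first $k+1$ natural numbers $m_0,\dots,m_k$ that lie $<_L n$'' fails to halt for large $k$, so $\Phi_{e(n)}^X$ is not total and $f(n)$ is not even a candidate notation. Padding with extra successors does not obviously help, because one cannot computably detect when the list of predecessors has been exhausted. Until this is repaired, neither direction of the biconditional goes through, and the iterated $\oplus$-sums make the ``furthermore'' clause murkier than it needs to be.

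The paper's construction fixes both issues at once with a small trick. Working in $L' = 1 + L + 1$ with least element $0$, it sets $h(k,n)$ to be the $<_{L'}$-greatest element of $\{j\le n : j <_{L'} k\}$; this always exists because $0$ is below everything. Then $\Phi_{e(k)}^X(n)$ is defined to be (the notation corresponding to) $f(h(k,n))$ plus $n$ successors. This is automatically total and strictly $<_\ast$-increasing, and the resulting $W_{p(a)}^X$ has the transparent order type $\omega\cdot(1+L)$: each element is $f(m)+n$ for a unique $m\in L'$ and $n\in\omega$. The ``furthermore'' clause is then immediate, since a $<_\ast$-descending sequence projects computably to a non-ascending, unbounded-below sequence in $L'$ via $f(m)+n\mapsto m$, from which an $L$-descending subsequence is extracted. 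I would recommend replacing your $\oplus$-sum enumeration with this ``greatest predecessor seen so far, plus padding'' device; it eliminates the obstacle you identified and makes the inverse map explicit.
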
 
\begin{proof}
The first statement above is a special case of \cite[Lemma 4.3]{sacksHRT}.  
The second statement is true for the construction in \cite{sacksHRT}, but
not explicitly stated there.  So for the reader's convenience here 
is an alternative construction which establishes both parts of the proposition.  

Let $L'$ denote the linear order with order type $1 + L + 1$.  
Without loss of generality, the least
element of $L'$ is 0 and the greatest element of $L'$ is 1.  
Define a function $e:L' \rightarrow \omega$ as follows.  
Let $e(0) = 1$ (the latter being the code for ordinal 0). 
For $k \in L'$ and $n \in \omega$, let $h(k,n)$ 
denote the $<_{L'}$-greatest element among $\{j\leq n : j<_{L'} k\}$.  Then
for $k\in L'$ with $k\neq 0$, define $e(k)$ by effective transfinite 
recursion as follows.
$$\Phi_{e(k)}^X(n) = \begin{cases} n  \text{ (i.e. the $n$th successor of 0)} & \text{ if }h(k,n) = 0\\
3\cdot 5^{e(h(k,n))} + n & \text{ otherwise.}\end{cases}$$
Then let $a = e(1)$.  It is routine to show that 
$e: (L, <_{L}) \rightarrow (W_{p(a)}^X, <_\ast)$ is order-preserving.  Also, 
the order type of $W_{p(a)}^X$ is $\omega \cdot (1+L)$, with $e$ 
providing a selector for each $\omega$-chain in $W_{p(a)}^X$.  
If $L$ is well-founded, induction along $L$ shows that $a \in \kO^X$.
On the other hand, $e$ and its inverse (the inverse being 
applied to the $\omega$-chains of $W_{p(a)}^X$) 
provide an effective 
correspondence between any descending sequences in $L$ and
in $W_{p(a)}^X$.
\end{proof}

Kleene's $\kO$ also has a $\Sigma^1_1$ superset $\kO^\ast$, defined as the intersection 
of all $X \in HYP$ such that $1 \in X$, $a \in X \implies 2^a \in X$, and 
$$\forall n [\Phi_e(n)\in X \text{ and } \Phi_e(n) <_\ast \Phi_e(n+1)] \implies 3\cdot 5^e \in X.$$
Observe also that $\kO$ is contained in $\kO^\ast$ (the definition of $\kO$ 
is the same, except that to get $\kO$ 
we quantify over all $X$ rather than just those in $HYP$).
Then, since $O^\ast$ is $\Sigma^1_1$, 
it must be a proper extension of $\kO$, and thus
there must be elements in $a \in \kO^\ast \setminus \kO$.
Such elements are called \emph{pseudo-ordinals}.
For all such $a$, $W_{p(a)}$ is an ill-founded linear ordering without
hyperarithmetic descending sequences, and hence must be isomorphic to
$\omega_1^{ck} + \omega_1^{ck}\cdot \mathbb Q + \beta$ for some computable
ordinal $\beta$ (see \cite[Theorem 1.8]{Harrison1968}).
In particular, for every pseudo-ordinal $a$ and every actual ordinal 
$\beta < \omega_1^{ck}$, there exists some $b<_\ast a$ 
which denotes $\beta$.

We will frequently use the following facts about pseudo-ordinals: 
any function on $\kO$ defined by effective transfinite recursion
with $HYP$ parameters
is also defined on all of $\kO^\ast$, and any arithmetic properties 
of the resulting objects also hold for all of $\kO^\ast$,
provided those properties are proved by induction.  These 
facts follow from the more general \cite[Corollary 1.6]{Harrison1968}, 
but they can also be easily seen in our reverse mathematics 
context, using the fact that effective transfinite recursion
and arithmetic transfinite induction hold in $HYP$, but $HYP$ 
believes all pseudo-ordinals are ordinals.  

\subsection{Alternating and ranked trees}\label{sec:breakapart}

The following definition of a ranking for a tree is looser 
than given by some authors.  We only require that the notations 
decrease, rather than the strong requirement that 
$\rho(\sigma) = \sup_n (\rho(\sigma\concat n) + 1)$.
Additionally, it is technically convenient for us to assume
that leaves have the smallest possible rank,
but nothing serious hinges on this.

\begin{definition}
If $T\subseteq \omega^{<\omega}$ is any tree, 
and $\rho : T\rightarrow \kO^\ast$, we say that  $\rho$ 
\emph{ranks} $T$ if 
\begin{enumerate}
\item for all $\sigma$ and $n$ 
such that $\sigma\concat n \in T$, we have 
$\rho(\sigma \concat n) <_\ast \rho(\sigma)$, 
and 
\item for each leaf $\sigma \in T$, $\rho(\sigma) = 1$. 
\end{enumerate}
If $T$ is ranked by $\rho$ and $\rho(\lambda) = a$, 
we say that $T$ is \emph{$a$-ranked} by $\rho$.
\end{definition}
If $T$ is a ranked tree and the name of the ranking 
function is not explicitly given, then its name is $\rho_T$.

Trees appear for us in two contexts: as codes for formulas 
of $L_{\omega_1,\omega}$ and codes for Borel sets.  In both cases,
interior nodes are labeled with one of 
$\{\cap, \cup\}$.  The nicest codes alternate these.

\begin{definition}
If $T\subseteq \omega^{<\omega}$ is a tree with a labeling 
function $\ell$ 
then we say $(T,\ell)$ \emph{alternates} if for every 
$\sigma\concat n \in T$, we have
$\ell(\sigma) \neq \ell(\sigma\concat n)$.
\end{definition}

The main point about alternating trees is that it is 
always safe to assume that we have them. 
If we start with a labeled, $a$-ranked tree,
we can effectively transform it into an alternating
$a$-ranked tree, with no effect on the logic of the tree
(assuming that whatever model we are working in does 
not contain any paths, if the tree is truly ill-founded.)

One small technical detail about this effective transformation will
be used later, so we give the transformation 
explicitly.  The definition is by effective transfinite 
recursion on the rank of the tree.   Given 
an $a$-ranked tree $T$ with a $\cup$ 
at the root, define
$$\operatorname{Alternate}(T) = \{\lambda\} \cup \bigcup_{\sigma \in A(T)}\langle n_\sigma\rangle \concat \operatorname{Alternate}(T_\sigma),$$
where $A(T)$ is the collection of all $\sigma \in T$ such that $\sigma$ is not a 
$\cup$, but each $\tau \prec \sigma$ is a $\cup$, and where $\sigma\mapsto n_\sigma$
is a computable injection from $\omega^{<\omega}$ to $\omega$; define the 
operation analogously when $T$ has a $\cap$ at the root, and 
of course the operation does nothing to a leaf.  We see that 
this operation is just rearranging some subtrees by 
breaking them apart and reattaching them to a higher-ranking parent than 
their original attachment.  The rank 
of any node in $\operatorname{Alternate}(T)$ is inherited from its 
rank in the original tree.  Observe that every level-one subtree of 
the alternated tree (that is, every subtree of the form
$\operatorname{Alternate}(T)_{\langle n_\sigma\rangle}$) is the alternating 
version of a subtree of a single level-one subtree (namely $T_{\langle \sigma(0)\rangle}$)
of the original tree.  In other words, the process of making a tree 
alternate may break apart level-one subtrees, but never mixes them together.

  \subsection{Borel sets in reverse mathematics}

In reverse mathematics, open subsets of $2^\omega$ 
are represented by sets of
strings $p \in 2^{<\omega}$.  If $U$ is such a code, we will 
abuse notation and write $X \in U$ to mean 
that for some $p \in U$, $p \prec X$.  This is 
in addition to also sometimes speaking of the 
strings $p \in U$.  Context will tell which usage is meant.

For arbitrary Borel sets, we will make a more careful 
distinction between code and object.  We restrict attention to 
Borel subsets of $2^\omega$.
A clopen subset $C$ of $2^\omega$ is represented by an element of
$\omega$ which canonically
codes a finite subset $F\subseteq 2^{<\omega}$.  As above,
for $X\in 2^\omega$, we say $X \in C$ if and only if
$p\prec X$ for some $p \in F$.
A code for $C$ as a clopen set gives more
information about $C$ than an open code for the same set,
because the number of elements of
$F$ is computable from the code.  Effectively
in a standard code for a clopen set, one can find a standard
code for its complement.

We take the following as the definition of 
a (labeled) Borel code in reverse mathematics. 

\begin{definition}
A  \emph{labeled Borel code} is a well-founded tree 
$T\subseteq \omega^{<\omega}$, together with a function 
$\ell$ whose domain is $T$, such that if $\sigma$ is 
an interior node, $\ell(\sigma)$ 
is either $\cup$ or $\cap$, and if $\sigma$ is a leaf,
$\ell(\sigma)$ is a standard code for a clopen 
subset of $2^\omega$.
\end{definition}

We call this a \emph{labeled Borel code} instead of a 
\emph{Borel code}, because we have added a labeling 
function to the
original definition to improve 
readability.\footnote{The original definition of a Borel code 
in reverse mathematics \cite{sosa} 
is a well-founded tree $T$ such that for exactly one 
$m \in \omega$, $\langle m \rangle \in T$.

Some conventions are then adopted: if 
$\langle m\rangle \in T$ is a leaf, then $T$
represents a clopen set coded by $m$ according to a 
standard computable look-up; if $\langle m\rangle$ is 
not a leaf, then $T$ represents a union or intersection 
according to the parity of $m$, and the sets to be thus 
combined are those coded by the subtrees 
$T_n = \{\langle n\rangle \concat \sigma : \langle m, n \rangle
\concat \sigma \in T\}$. 
Classically, one can translate easily between this definition and the
definition of Borel code given above, but one direction of the 
translation requires $\ACA$ 
because one cannot effectively determine when 
a node is a leaf.  All the principles considered in this 
paper will imply $\ACA$ over $\RCA$, 
so nothing will be muddled, but for the sake of fastidious 
readers, we will always call these \emph{labeled Borel codes} to 
acknowledge the distinction.}  If $\ell(\sigma) = \cup$ we 
may simply say ``$\sigma$ is a union node'', and similarly for 
$\cap$.  We will also usually suppress mention of $\ell$, 
in an abuse of notation.

If $T$ is a labeled Borel code and $X \in 2^\omega$, 
the existence of an \emph{evaluation map} is used to determine 
whether $X$ is in the set coded by $T$.
\begin{definition}
If $T$ is a labeled Borel code and $X \in 2^\omega$,
an \emph{evaluation map}
for $X \in T$ is a function
$f:T\rightarrow \{0,1\}$ such that 
\begin{itemize}
  \item If $\sigma$ is a leaf, $f(\sigma) = 1$ if and only if $X$
    is in the clopen set coded by $\ell(\sigma)$.
  \item If $\sigma$ is a union node, $f(\sigma) = 1$ if and only
    if $f(\sigma\concat n) = 1$ for some $n\in \omega$.
  \item If $\sigma$ is an intersection node, $f(\sigma) = 1$ if and
    only if $f(\sigma\concat n) = 1$ for all $n \in \omega$.
  \end{itemize}
We say that $X$ is in the set coded by $T$, denoted $X \in \makeset T$, 
if there is an evaluation map $f$ for $X$ in $T$ such that 
$f(\lambda) = 1$.
\end{definition}

Note that $X \in \makeset T$ is a $\Sigma^1_1$ statement.  
Because evaluation maps are naturally constructed by arithmetic
transfinite recursion, $\ATR$ proves that if $T$ is a Borel code and 
$X \in 2^\omega$,
  there is an evaluation map $f$ for $X$ in $T$.  Furthermore, 
  $\ACA$ proves that if an evaluation map exists, then it is unique.
  For detailed proofs, see \cite[Chapter V.3]{sosa}.

Because we are considering these definitions in the context 
of reverse mathematics, there will sometimes be an ill-founded $T$ 
which a model thinks is well-founded.  In these cases, 
the statement $X \in \makeset{T}$ is meaningful inside 
the model, or in the context of a proof inside second
order arithmetic, but is not meaningful outside a model.
However, 
the criteria defining what it means to be an evaluation 
map are absolute, so we can and will construct 
evaluation maps on ill-founded but otherwise coherent 
labeled Borel codes.    
If $T$ is ill-founded, we will never use the notation
$\makeset{T}$ outside of a model.  But if $T$ is 
well-founded, then every $X$ has a unique evaluation map in
$T$.  In that case we give the notation ``$\makeset{T}$''
the obvious meaning of 
$$\{X : \text{the unique evaluation map $f$ for $X$ in $T$ satisfies $f(\lambda) = 1$}\}$$ when 
we refer to it outside the context of a model.

Operations on Borel sets are carried out easily.
Observe that the operation which corresponds to 
complementation on a labeled Borel code is primitive recursive:
just swap all the $\cup$ and $\cap$ labels, and 
replace every clopen leaf label with its complementary label.
\begin{definition}
If $(T,\ell)$ is a labeled Borel code, let $(T,\ell^c)$ denote the 
labeled 
Borel code whose tree is the same, and whose labeling $\ell^c$ is 
complementary to $\ell$ as described above.
\end{definition}
Continuing the abuse of notation, if $T$ is used to refer to some 
$(T,\ell)$, then $T^c$ will be shorthand for $(T,\ell^c)$.
Observe that $\RCA$ proves that if $T$ is a labeled Borel 
code, then $T^c$ is a labeled Borel code.  Similarly, 
if $(T_n)_{n\in\omega}$ is a sequence of labeled Borel codes, 
in $\RCA$ we can construct a code for the intersection 
or union of these sets in the obvious effective way, and $\RCA$ 
will prove that the result is a labeled Borel code.

\subsection{On the maxim that ``Borel sets need $\ATR$''}

Because making meaning out of a standard (labeled)
 Borel code 
requires evaluation maps to be around, $\ATR$ is typically 
taken as the base theory when evaluating theorems involving 
Borel sets.  Even when $\ATR$ is not taken as the base theory, 
theorems involving Borel sets tend to imply $\ATR$.  The 
probable reason for this was observed in \cite{DFSW}.

\begin{theorem}[\cite{DFSW}]\label{thm:1.7}
In $\RCA$, the statement
``For every Borel code $T$, there exists $X$ such that $X \in |T|$ 
or $X \in |T^c|$'' implies $\ATR$.
\end{theorem}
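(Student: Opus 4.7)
The plan is to leverage the equivalence, provable in $\RCA$, between $\ATR$ and the assertion that, for every well-order $L$ and every set $Z$, the jump hierarchy $H$ on $L$ with base $Z$ exists (see \cite[Section V.2]{sosa}). A preliminary observation is that the hypothesis implies $\ACA$: for any Turing index $e$, the code $T = \bigcap_{n}(A_n \cup A_n^c)$, where each $A_n$ is a clopen-union Borel code for ``$\Phi_e(n)\downarrow$,'' has an evaluation map supplied by the hypothesis whose commitments at the nodes $A_n \cup A_n^c$ define $W_e$. With $\ACA$ in hand, evaluation maps are unique, which will be crucial below.

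Given $L$ and $Z$, I would construct by effective transfinite recursion along $L$ a family of labeled Borel codes $\{T_a^n : a \in L,\, n \in \omega\}$ that internalize the recursive definition of $H$. Each $T_a^n$ is a union over pairs $(s, \tau)$, where $\tau$ is a candidate finite oracle answer and $s$ a step bound: the $(s,\tau)$-disjunct is the intersection of the clopen assertion that $\Phi_n^{Z \oplus \tau}(n)$ halts in $s$ steps with a verification that each bit ``$m \in H(b)$'' (for $b <_L a$) queried along the way in $\tau$ is confirmed by $T_b^m$ or refuted by $(T_b^m)^c$. Classically, this recursion forces $|T_a^n| = 2^\omega$ when $n \in H(a)$ and $|T_a^n| = \emptyset$ otherwise, though the model need not yet know which case obtains. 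Combine the $T_a^n$ into
$$T = \bigcap_{a \in L,\, n \in \omega}\bigl(T_a^n \cup (T_a^n)^c\bigr),$$
a Borel code whose classical meaning is $2^\omega$.

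Apply the hypothesis to $T$ to obtain an $X$ together with an evaluation map $f$ witnessing $X \in |T|$ or $X \in |T^c|$. At each top-level subtree $T_a^n \cup (T_a^n)^c$, $f$ commits to exactly one disjunct; collecting these commitments over all $(a,n)$ yields a candidate set for $H$. The main obstacle is proving that this candidate really is the jump hierarchy. I would handle this by a simultaneous transfinite induction along $L$: at stage $a$, the internal invocations of $T_b^m$ (and $(T_b^m)^c$) inside each $T_a^n$ must, by uniqueness of evaluation maps, carry truth values agreeing with the independently-made top-level commitments at $T_b^m \cup (T_b^m)^c$ for $b <_L a$. Hence the commitments at the nodes $T_a^n \cup (T_a^n)^c$ satisfy the jump-hierarchy recursion, the extracted set is $H$, and $\ATR$ follows.
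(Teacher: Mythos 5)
Your argument is correct and follows the route the paper (via \cite{DFSW}) indicates: the hypothesis forces an evaluation map for some $X$ in $T$ to exist (taking $1-f$ when the witness is for $X \in |T^c|$), and a Borel code built from the jump-hierarchy recursion along $L$ --- all of whose leaves code $2^\omega$ or $\emptyset$, so that the map's values are independent of $X$ --- turns any such evaluation map into the hierarchy itself, with uniqueness of evaluation maps (available once $\ACA$ is derived, exactly as you derive it) identifying the internal copies of $T_b^m$ with the top-level commitments. The only point left implicit, and it is routine, is that $T$ is provably well-founded in $\RCA$ because any infinite path through it would compute a descending sequence in $L$.
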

The strength comes from the fact that this statement is asserting the 
existence of an evaluation map for $X$ in $T$.
  If $f$ is an evaluation map for $X$ in $T$,
  then $1-f$ is an evaluation map for $X$ in $T^c$.\footnote{The 
statement in \cite{DFSW} is for original Borel codes, 
but the
proof of the theorem remains valid for labeled Borel codes.}

\begin{restatement}[of Theorem \ref{thm:1.7}]
The statement
    ``For every Borel set, either it or its complement is nonempty''
    is equivalent to $\ATR$ over $\RCA$.
\end{restatement}

This can make the reverse mathematics of some standard theorems 
about Borel sets feel rather empty.  Here is an example. 
Recall that a set
 $A \subseteq 2^\omega$ has the \emph{property of Baire}
  if it differs from an open set by a meager set.  
  That is, there are open sets $U$ and $\{D_n\}_{n\in\omega}$ such that
each $D_n$ is dense,
and for all $X \in \cap_n D_n$, $X \in U \Leftrightarrow X \in A$.
A basic proposition is that
every Borel set has the property of Baire, but what is the 
strength of that proposition in reverse mathematics?  In \cite{DFSW}, the
relevant notions were formalized as follows.

\begin{definition}
A \emph{Baire code} is a collection of open sets
  $U, V, \{D_n\}_{n \in \omega}$ such that $U \cap V = \emptyset$
  and the sets $U\cup V$ and $D_n$ are dense.
\end{definition}

The statement $\mathsf{PB}$ below formalizes the proposition ``Every 
Borel set has the property of Baire.''

\begin{definition}
If $T$ is a Borel code and $U,V,\{D_n\}$ is a Baire code, we say that 
$U,V,\{D_n\}$ is a \emph{Baire approximation} to $T$ if
for all $X \in \cap_n D_n$,
  $X \in U \Rightarrow X \in \makeset T$ and
  $X \in V \Rightarrow X \in \makeset{T^c}$.
\end{definition}

\begin{definition}
Let $\mathsf{PB}$ denote the statement ``Every Borel code has a Baire approximation.''
  \end{definition}
  
\begin{proposition}{\cite{DFSW}}\label{prop:unsatisfactory} In $\RCA$, $\ATR$ is equivalent to $\mathsf{PB}$.
\end{proposition}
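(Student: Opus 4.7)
The plan is to prove both implications over $\RCA$.

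For $\ATR \Rightarrow \mathsf{PB}$, I would proceed by arithmetic transfinite recursion on the Borel code $T$, assigning to each $\sigma \in T$ a Baire approximation $(U_\sigma, V_\sigma, \{D_{\sigma,n}\}_n)$ to the set coded by the subtree $T_\sigma$. At a leaf coding a clopen set $C$, take $U_\sigma = C$ and $V_\sigma = 2^\omega \setminus C$, with all $D_{\sigma,n} = 2^\omega$. At a $\cup$-node $\sigma$ with children's data $(U_n, V_n, \{D_{n,k}\}_k)$, set $U_\sigma = \bigcup_n U_n$ and let $V_\sigma = \{p \in 2^{<\omega} : \text{no } q \succeq p \text{ lies in } U_\sigma\}$, available via $\ACA$. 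The new family of dense sets consists of every $D_{n,k}$ together with $U_n \cup V_n$ for each $n$. Density of $U_\sigma \cup V_\sigma$ is immediate: any $p$ either has an extension in $U_\sigma$ or else $p \in V_\sigma$. If $X$ lies in $V_\sigma$ and in all of the dense sets, then for each $n$ we have $X \notin U_n$ but $X \in U_n \cup V_n$, so $X \in V_n$; by induction, $X \in \makeset{T_{\sigma \concat n}^c}$ for every $n$, and hence $X \in \makeset{T_\sigma^c}$. Intersection nodes are handled dually via DeMorgan, by swapping the roles of $U$ and $V$ at the children, applying the union construction, then swapping again at the output. Verification that this assignment is indeed a Baire approximation is by arithmetic transfinite induction, using the uniqueness of evaluation maps provable in $\ACA$.

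For $\mathsf{PB} \Rightarrow \ATR$, I would appeal to the restatement of Theorem \ref{thm:1.7}. Given a Borel code $T$, let $(U, V, \{D_n\}_n)$ be a Baire approximation provided by $\mathsf{PB}$. In $\RCA$, I would construct $X \in (U \cup V) \cap \bigcap_n D_n$ by the usual sequential procedure: first choose $p_0$ with $[p_0] \subseteq U$ or $[p_0] \subseteq V$ (possible since $U \cup V$ is dense and $U, V$ are open and disjoint), then extend each $p_n$ to $p_{n+1}$ with $[p_{n+1}] \subseteq D_n$. Because $U$ and $V$ are open, $X = \bigcup_n p_n$ remains in whichever of $U, V$ was chosen initially, and lies in every $D_n$. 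By the defining property of a Baire approximation, this forces $X \in \makeset{T}$ or $X \in \makeset{T^c}$, which verifies the hypothesis of Theorem \ref{thm:1.7} and hence yields $\ATR$.

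The principal obstacle is in the forward direction, specifically the construction of $V_\sigma$ at $\cup$-nodes without explicitly complementing a Borel set. The crucial observations are that Baire approximations are symmetric under $U \leftrightarrow V$, so $\cap$-nodes reduce to $\cup$-nodes via DeMorgan, and that inserting the auxiliary dense sets $U_n \cup V_n$ is precisely what allows the inductive step to transfer membership in the ``interior of the complement'' $V_\sigma$ down to membership in each individual $V_n$. The reverse direction is essentially routine once Theorem \ref{thm:1.7} is invoked.
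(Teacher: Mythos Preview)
Your proposal is correct and follows essentially the same approach as the paper: the forward direction is the standard construction of a Baire approximation by arithmetic transfinite recursion (the paper says only ``the standard proof uses arithmetic transfinite recursion''), and the reverse direction is exactly the paper's observation that a Baire approximation yields some $X$ in $\makeset{T}$ or $\makeset{T^c}$, reducing to Theorem~\ref{thm:1.7}. You have simply filled in the details that the paper leaves implicit.
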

\begin{proof} $(\Rightarrow)$ The standard proof uses arithmetic transfinite recursion.\\
$(\Leftarrow)$ If a set has the property of Baire, either it or its complement
is nonempty.
\end{proof}

The reverse direction of this proof is highly unsatisfactory.  The purpose
of this paper is to propose a variant on the definition of a Borel set 
which avoids this and similar unsatisfactory reversals to $\ATR$.

\subsection{Some landmarks between $\ATR$ and $\JI$}

We will end up placing a variant of $\mathsf{PB}$ somewhere in a zoo 
which exists just below $\ATR$.  Much of what is known about this 
region concerns theories, such as $\DCA$, whose $\omega$-models 
are closed under join, hyperarithmetic reduction, and not much more.

\begin{definition}
A \emph{statement of hyperarithmetic analysis} is any statement $S$ 
such that 
\begin{enumerate}
\item whenever $\mathcal M$ is an $\omega$-model which satisfies 
$S$, its second-order part $M$ is closed under hyperarithmetic reduction.
\item For every $Y$, $HYP(Y)$ is the second-order part of an 
$\omega$-model of $S$, where $HYP(Y) = \{X : X\leq_h Y\}$.
\end{enumerate}
A \emph{theory of hyperarithmetic analysis} is any theory which 
satisfies the same requirements as above.
\end{definition}

The original definition of a theory of hyperarithmetic analysis,
due to Steel \cite{Steel1978}, was a theory whose minimum 
$\omega$-model is $HYP$.  The relativized version above was 
first explicitly defined in \cite{Montalban2006}.  At the time of 
Steel's definition, theories such as $\Delta^1_1$-$\mathsf{CA}_0$
were unrelativized (did not permit real parameters from the model). 
However, modern definitions of these theories allow parameters.  
Therefore, the relativized definition of theories of 
hyperarithmetic analysis is the right one for modern usage.

It would be tempting to hope that there would be some theory of 
hyperarithmetic analysis whose $\omega$-models are 
\emph{exactly} the Turing ideals which are closed under
hyperarithmetic reduction, in analogy to the theorems 
characterizing the $\omega$-models of $\RCA$ 
as the Turing ideals, the $\omega$-models
of $\WKL$ as the Scott ideals, and 
the $\omega$-models of $\ACA$ as the Turing ideals 
closed under arithmetic reduction.  However, no 
such theory can exist.

\begin{theorem}\cite{vanwesep77}
For every theory $T$, all of whose $\omega$-models are closed 
under hyperarithmetic reduction, there is a strictly weaker theory $T'$, 
all of whose $\omega$-models are also closed under 
hyperarithmetic reduction,
and which has more $\omega$-models than $T$.
\end{theorem}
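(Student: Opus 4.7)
The plan is to construct, given $T$, a strictly weaker theory $T'$ by exhibiting a new $\omega$-model $\mathcal{M}^*$ which is closed under hyperarithmetic reduction but fails some axiom of $T$, and then setting $T'$ to be a theory satisfied both by $\mathcal{M}^*$ and by the original $\omega$-models of $T$.  Since the hypothesis on $T$ means every second-order part of an $\omega$-model of $T$ is a hyperarithmetic ideal (a Turing ideal closed under $\leq_h$), the task reduces to finding a strictly larger class of $\omega$-models, still consisting of hyperarithmetic ideals, that is itself the class of $\omega$-models of some theory.

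First I would construct $\mathcal{M}^*$ using the pseudo-ordinal machinery from the preliminaries: starting from an $\omega$-model $\mathcal{M}_0 \models T$, I would use a pseudo-ordinal $a \in \kO^\ast \setminus \kO$ to build a countable hyperarithmetic ideal $M^* \subseteq M_0$ engineered so that $\mathcal{M}^*$ fails some $\Sigma^1_1$ axiom of $T$ while remaining closed under $\leq_h$.  Alternatively, a Barwise-compactness argument in $L_{\omega_1,\omega}$ produces a countable hyperarithmetic ideal satisfying arbitrarily strong fragments of $T$ while deliberately failing a chosen axiom.  Then I would set $T' = T \cap \mathrm{Th}(\mathcal{M}^*)$; by construction $T' \subsetneq T$ since $\mathcal{M}^* \models T'$ but $\mathcal{M}^* \not\models T$, and the class of $\omega$-models of $T'$ strictly extends that of $T$.

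The main obstacle is verifying that every $\omega$-model of $T'$ is still closed under hyperarithmetic reduction.  A priori, weakening $T$ risks producing an $\omega$-model that fails the HYP-closure scheme ``for every $X$ and every $a \in \kO^X$, $H_a^X$ exists''.  Since $\mathcal{M}^*$ is itself a hyperarithmetic ideal, every HYP-closure consequence of $T$ already holds in $\mathcal{M}^*$ and is thus inherited by $T'$; what remains is the genuinely hard step of checking that this surviving fragment \emph{forces} HYP-closure in \emph{arbitrary} $\omega$-models of $T'$, not just in $\mathcal{M}^*$ and the old models of $T$.  This requires identifying a ``HYP-forcing core'' of $T$ and verifying that $T'$ retains it, and since the theorem applies to arbitrary $T$, the real work of van Wesep's argument is a uniform construction of $\mathcal{M}^*$ that preserves this core regardless of what $T$ happens to prove.
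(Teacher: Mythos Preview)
The paper does not give a proof of this theorem; it is simply cited from van Wesep's 1977 thesis as background to motivate why there is no ``canonical'' theory of hyperarithmetic analysis. So there is no paper proof to compare against.

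That said, your proposal is not a proof but a plan, and you have correctly located the real difficulty and then not resolved it. Setting $T' = T \cap \mathrm{Th}(\mathcal{M}^*)$ for some hyperarithmetically closed $\mathcal{M}^* \not\models T$ does give a theory with strictly more $\omega$-models, but there is no reason whatsoever that an arbitrary $\omega$-model of this intersection should be hyperarithmetically closed. Your observation that ``every HYP-closure consequence of $T$ already holds in $\mathcal{M}^*$ and is thus inherited by $T'$'' is true but irrelevant: the HYP-closure scheme is a set of $\Pi^1_2$ sentences, and there is no single such sentence (or even recursively axiomatizable set of them) whose $\omega$-models are exactly the hyperarithmetic ideals---indeed, that is precisely what the theorem you are trying to prove implies. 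So you cannot hope to isolate a ``HYP-forcing core'' of $T$ syntactically and check that $T'$ retains it; any such core would itself be a counterexample to the theorem.

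Van Wesep's actual argument does not proceed by weakening $T$ syntactically. It uses Steel's tagged-tree forcing to build, for any such $T$, a hyperarithmetically closed $\omega$-model $\mathcal{N}$ in which $T$ fails, and then takes $T'$ to be (essentially) $T$ restricted to what holds in $\mathcal{N}$ together with enough of the forcing machinery to guarantee that \emph{every} $\omega$-model of $T'$ arises from a similar forcing extension and hence is hyperarithmetically closed. The point is that the construction of $\mathcal{M}^*$ and the verification that $T'$ still forces HYP-closure are not separate steps: the forcing notion is chosen precisely so that its generic extensions are automatically hyperarithmetic ideals, and $T'$ is engineered to capture this. Your outline has no mechanism of this kind.
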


Therefore, we are stuck with an infinitely descending zoo 
of statements/theories of hyperarithmetic analysis.

One theory of hyperarithmetic analysis is most relevant to us. 
Recall that a formula of 
$L_{\omega_1,\omega}$ is a formula constructed from 
the usual building blocks of first-order logic, together 
with countably infinite conjunctions and disjunctions.
In a language which contains no atomic formulas other 
than {\tt true} and {\tt false}, a formula of $L_{\omega_1,\omega}$
is just a well-founded tree whose interior nodes are 
labeled with either $\cup$ (infinite disjunction) or 
$\cap$ (infinite conjunction), and whose leaves are labeled 
with either {\tt true} or {\tt false}.
An \emph{evaluation map} for a formula of $L_{\omega_1,\omega}$
is defined the same as an evaluation map for an element 
$X$ in a Borel code $T$, except that the evaluation 
map must satisfy $f(\sigma) = 1$ if $\ell(\sigma) =$ {\tt true}
and $f(\sigma) = 0$ if $\ell(\sigma) =${\tt false}.
A formula of $L_{\omega_1,\omega}$ is \emph{completely determined} 
if it has an evaluation map.  Classically, every 
formula of $L_{\omega_1,\omega}$ is completely determined, but 
in weaker theories the witnessing function could fail to exist.
A formula is called \emph{true} if it has a witnessing 
function which maps the formula itself to ${\tt true}$.  

The following definition and result essentially appear in
\cite{Montalban2006}, where $\LwCA$
goes by the name $\mathsf{CDG\text{-}CA}$, and is stated
in terms of games.  The name $\LwCA$ and the definition
given here were introduced in
\cite{MontalbanTHA_Friedman}.

\begin{definition}[similar to \cite{Montalban2006}] The principle $\LwCA$ 
is this statement: If $\{\phi_i : i \in \mathbb N\}$ is a 
sequence of completely determined $L_{\omega_1,\omega}$ formulas, 
then the set $X = \{i : \phi_i \text{ is true}\}$ exists.
\end{definition}

\begin{theorem}[essentially \cite{Montalban2006}] The principle 
$\LwCA$ is a statement of hyperarithmetic analysis.
\end{theorem}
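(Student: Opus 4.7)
The plan is to verify both clauses of the definition of a statement of hyperarithmetic analysis. Before starting I would observe that $\LwCA$ implies $\ACA$: given a $\Sigma^0_1$ predicate $\exists x\,\theta(n,x)$, the depth-one $L_{\omega_1,\omega}$-formula $\bigvee_x \theta(n,x)$ is trivially completely determined (its evaluation map is computable from $\theta$), so $\LwCA$ returns $\{n : \exists x\,\theta(n,x)\}$. Hence any $\omega$-model of $\LwCA$ is closed under Turing jumps, which I will use repeatedly.

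For clause (1), fix an $\omega$-model $\mathcal M$ of $\LwCA$ and $Z \in M$; it suffices to show $H_a^Z \in M$ for every $a \in \kO^Z$, since then every $A \leq_h Z$ lies in $M$. I would proceed by (external) transfinite induction along $<_\ast$ inside $\kO^Z$. The successor case is handled immediately by jump closure. In the limit case $a = 3\cdot 5^e$, the inductive hypothesis gives $H_{\Phi_e(n)}^Z \in M$ for each $n$. By Proposition \ref{prop:Hformula}, there is a uniformly effective sequence of $L_{\omega_1,\omega}$-formulas $\psi_{n,x}$ expressing $x \in H_{\Phi_e(n)}^Z$; this sequence is arithmetic in $Z$ and $a$, hence in $M$. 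The evaluation map for $\psi_{n,x}$ is arithmetic in $H_{\Phi_e(n)}^Z$ (at each node it simply reads off membership of the relevant element in the hierarchy), so by jump closure each $\psi_{n,x}$ is completely determined in $M$. Applying $\LwCA$ inside $M$ to this sequence yields the set $\{(n,x) : x \in H_{\Phi_e(n)}^Z\}$, from which $H_a^Z$ is arithmetic by the standard limit-stage definition.

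For clause (2), fix $Y$ and consider $HYP(Y)$, which is an $\omega$-model of $\Delta^1_1\text{-}\mathsf{CA}_0$. Given a sequence $\{\phi_i\}_{i \in \omega} \in HYP(Y)$ of completely determined formulas, I would argue that $\{i : \phi_i \text{ is true}\}$ is $\Delta^1_1$ in the parameters. It is $\Sigma^1_1$ as ``there exists an evaluation map $f$ for $\phi_i$ with $f(\lambda) = 1$''. Since each $\phi_i$ is completely determined and hence well-founded (its only path, if any, would be $\Sigma^1_1$-definable from $Y$ and so would lie in $HYP(Y)$, contradicting well-foundedness there), its evaluation map is unique in $HYP(Y)$, so the set is equivalently $\Pi^1_1$ as ``no evaluation map $g$ has $g(\lambda) = 0$''. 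Thus the truth set is $\Delta^1_1$ in $Y$, so exists in $HYP(Y)$ by $\Delta^1_1$-comprehension.

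The main obstacle I anticipate is the limit step in clause (1): one must verify carefully that the sequence $(\psi_{n,x})$ and its evaluation maps are genuinely uniformly arithmetic in $H_{\Phi_e(n)}^Z$ inside $M$, and that the truth set returned by $\LwCA$ can be effectively massaged into $H_a^Z$ itself. This is essentially a bookkeeping exercise in effective transfinite recursion, likely requiring the alternation/normalization apparatus of Section~\ref{sec:breakapart} to keep the ranks of the $\psi_{n,x}$ bounded correctly, but it is where all the real work of the proof takes place.
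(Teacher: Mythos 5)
The paper does not actually prove this theorem --- it is cited as essentially due to \cite{Montalban2006} (for the game formulation $\mathsf{CDG}$-$\mathsf{CA}$) --- so I can only judge your argument on its merits. Your preliminary observation that $\LwCA$ implies $\ACA$ and your treatment of clause (1) are fine: the limit step, feeding the formulas $\psi_{n,x}$ of Proposition \ref{prop:Hformula} (which are ranked by genuine notations and hence genuinely well-founded, with evaluation maps arithmetic in $H_{\Phi_e(n)}^Z$) into $\LwCA$ to recover $\bigoplus_n H_{\Phi_e(n)}^Z$, is exactly the right idea.

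Clause (2), however, contains a genuine gap. You assert that a completely determined formula $\phi_i \in HYP(Y)$ must be genuinely well-founded, because an infinite path ``would be $\Sigma^1_1$-definable from $Y$ and so would lie in $HYP(Y)$.'' This is false: an ill-founded tree in $HYP(Y)$ need not have any path in $HYP(Y)$ --- this is precisely the phenomenon of pseudo-well-founded trees (e.g.\ the tree of descending sequences of the Harrison order) on which Sections 4--6 of this paper are built. Such a tree, with all nodes labeled $\cup$ and leaves labeled {\tt false}, is completely determined in $HYP$ via the evaluation map $f\equiv 0$, yet in the real world the characteristic function of any path is an evaluation map with root value $1$. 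So your $\Sigma^1_1$ definition (``some evaluation map has root value $1$'') computes the wrong set, and its claimed $\Pi^1_1$ equivalent fails because evaluation maps are not unique over all reals, only over reals in the model. The repair is to quantify over evaluation maps \emph{in} $HYP(Y)$: the sets $\{i : \exists f \in HYP(Y)\, (f \text{ evaluates } \phi_i,\ f(\lambda)=1)\}$ and $\{i : \exists g \in HYP(Y)\, (g \text{ evaluates } \phi_i,\ g(\lambda)=0)\}$ are each $\Pi^1_1(Y)$ by the Kleene/Spector--Gandy theorem on quantification over $HYP$; they are disjoint (two disagreeing evaluation maps in $HYP(Y)$ would jointly compute a path in $HYP(Y)$) and, by complete determinacy, exhaustive, so each is $\Delta^1_1(Y)$ and hence in $HYP(Y)$. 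As written, your argument does not establish clause (2).
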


\subsection{Genericity}

The concept of genericity stems directly from category; a 
sufficiently generic member of a set which has the property of Baire
has individual behavior that agrees with the behavior of
a comeager set of reals.  In this subsection we introduce 
the terminology around genericity 
and provide proofs of several folklore results which will
be needed later.

A predicate $P(X)$ is called computable if there is a Turing 
functional $\Delta$ such that for all $X \in 2^\omega$, 
$\Delta(X)$ halts and outputs {\tt true}  or {\tt false} according 
to the truth value of $P(X)$.
A relativized formula of $L_{\omega_1,\omega}$ is a formula 
of $L_{\omega_1,\omega}$ for which the leaves bear 
computable predicates instead of simply {\tt true} or {\tt false}. 
Using the compactness of $2^\omega$ to translate between 
clopen sets and $\{X : \Delta(X) = {\tt true}\}$,
it is immediate that the set of $X$ which satisfy a given 
relativized formulas of $L_{\omega_1,\omega}$
are exactly the members of the Borel set coded by essentially the 
same formula.  If such a formula $\phi$ is computable,
$a$-ranked, and has a union at the root, then 
$\{X \in 2^\omega : \phi(X)\}$ is a $\Sigma^0_a$ 
set and the formula is called a $\Sigma^0_a$ formula.
Of course, the input to the formula could also be a natural 
number, in which case it defines a $\Sigma^0_a$ subset of $\omega$.

If $S \subseteq 2^{\omega}$ is a set of strings, a real $X$ 
is said to \emph{meet} $S$ if for some $p \in S$, 
$p \prec X$, while $X$ is said to \emph{avoid} $S$ if 
some $p \prec X$ has no extension in $S$.  The set $S$ 
is \emph{dense} if every $p \in 2^{<\omega}$ can be 
extended to meet it.
A real $X$ is called \emph{$a$-generic} if $X$ meets or 
avoids every $\Sigma^0_a$ set of strings.  The following propositions, 
which taken together informally assert that set of strings which force a $\Sigma^0_a$ 
statement is $\Sigma^0_a$, are folklore.

\begin{proposition}
Uniformly in a code for a $\Sigma^0_a$
set $A$, there is a $\Sigma^0_a$ code for an open set $U$, 
as well as a uniform sequence 
of $\Sigma^0_a$
codes for dense open sets $D_n$ such that for all $X \in \cap_k D_k$, 
we have $X \in A$ if and only if $X \in U$.  
\end{proposition}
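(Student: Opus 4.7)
The plan is to proceed by effective transfinite recursion on the rank $a$ of the given $\Sigma^0_a$ code for $A$.  For the base case ($a = 1$, a clopen leaf), set $U := A$ and take each $D_n$ to be all of $2^\omega$.

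For the inductive step, after applying the $\operatorname{Alternate}$ operation from Section \ref{sec:breakapart} we may assume $A = \bigcup_n A_n$, where each $A_n$ is coded by a level-one subtree of rank $b_n <_\ast a$ and is either clopen or $\Pi^0_{b_n}$ (with $\cap$ at the root).  If $A_n$ is clopen, simply set $V_n := A_n$ with no associated dense open sets.  If $A_n$ is $\Pi^0_{b_n}$, apply the inductive hypothesis to $A_n^c$, which is $\Sigma^0_{b_n}$, to get an open $W_n$ and a sequence of dense open sets $\{E_{n,k}\}_k$ with $A_n^c$ agreeing with $W_n$ on $\bigcap_k E_{n,k}$.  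Define the open set
$$V_n := \bigcup\{[p] : [p] \cap W_n = \emptyset\},$$
which is the interior of the closed set $W_n^c$, and the dense open set $G_n := V_n \cup W_n$, which is the complement of the closed nowhere dense boundary of $W_n^c$.  A direct check shows that $A_n$ agrees with $V_n$ on $G_n \cap \bigcap_k E_{n,k}$: if $X \in A_n$ then $X \notin W_n$ so $X \in G_n$ forces $X \in V_n$; and if $X \in V_n$ then $X \notin W_n$, whence $X \notin A_n^c$.

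Set $U := \bigcup_n V_n$ and let the output sequence of dense open sets uniformly enumerate all the $G_n$ together with all the $E_{n,k}$.  Verification is routine: for $X$ in the intersection of all these sets, $X \in A$ iff some $A_n$ contains $X$, iff the corresponding $V_n$ contains $X$, iff $X \in U$.

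The main obstacle will be the rank bookkeeping.  In the $\Pi^0_{b_n}$ case, the code for $V_n$ as a set of strings is $\Pi^0_{b_n}$, since it asserts that no extension of $p$ belongs to the $\Sigma^0_{b_n}$ code for $W_n$; equivalently, it is $\Sigma^0_{b_n + 1}$.  The code for $G_n$ is treated similarly, while the $E_{n,k}$ are $\Sigma^0_{b_n}$ by induction.  Since $b_n <_\ast a$, absorbing the constant number of rank bumps in the sense of the $a + O(1)$ convention keeps all these codes within $\Sigma^0_a$, and the countable union producing $U$ stays $\Sigma^0_a$.  Uniformity in the code for $A$ is automatic from the effectiveness of the construction.
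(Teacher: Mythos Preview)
Your argument is correct and follows essentially the same route as the paper: induct on the rank, complement the $\Pi^0_{b_n}$ children to apply the inductive hypothesis, take interiors of the complements of the resulting open sets, and throw in $V_n \cup W_n$ as additional dense open sets. One small quibble: the rank bookkeeping has nothing to do with the ``$a+O(1)$ convention''; the point is simply that $b_n <_\ast a$ implies $b_n+1 \leq_\ast a$, so a $\Sigma^0_{b_n+1}$ code is already a $\Sigma^0_a$ code with no overflow to absorb.
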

\begin{proof}
This is a straightforward effectivization of the usual
proof that every Borel set has the property of Baire.

The result is is immediate if $a$ is the 0 ordinal 
(in which case all sets described are clopen).  Suppose that it holds
for all $b<_\ast a$.  We have $A = \cup_n A_n$ where each 
$A_n$ is $\Pi^0_{b_n}$ for some $b_n <_\ast a$.  Apply the 
induction hypothesis to the complements $A_n^c$ to 
get a sequence of open sets $U_n$ and a double sequence 
of dense open sets $D_{n,k}$, where each $U_n$ and $D_{n,k}$
have a $\Sigma^0_{b_n}$ code, and any $X \in \cap_k D_{n,k}$ 
is in $A_n^c$ if and only if it is in $U_n$.  Define $V_n$ to be the 
interior of $U_n^c$.  Then each $V_n$ is uniformly 
$\Sigma^0_{b_n+1}$ and thus $\Sigma^0_a$.  We can let 
$U = \cup_n V_n$ and let the sequence of dense open sets 
include all sets $D_{n,k}$, as well as sets of the form $U_n \cup V_n$.
Suppose that $X$ meets all these dense sets. The $X \in A$ 
exactly if $X \in A_n$ for some $n$, which happens exactly if 
$X \not\in U_n$ for some $n$.  Since $X \in U_n \cup V_n$, this 
is true exactly when $X \in V_n$ for some $n$, 
equivalently when $X\in U$.
\end{proof}

\begin{proposition}\label{prop:agen}
If $\phi(X,q)$ is a $\Sigma^0_a$ formula,
there is a $\Sigma^0_a$ formula $R(q)$ such that
for all $a$-generic reals $X$, 
$$\{q : \phi(X,q)\} = \{q : \exists n R(X\upharpoonright n, q)\}$$
\end{proposition}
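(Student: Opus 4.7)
The plan is to combine the previous proposition with the definition of $a$-genericity. Since a $\Sigma^0_a$ formula $\phi(X,q)$ with a natural number parameter $q$ can be viewed, for each fixed $q$, as a $\Sigma^0_a$ formula in $X$, and since the previous proposition is proved uniformly in its input code, I can first apply it in parameterized form.

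First, I would apply the previous proposition uniformly in $q$ to obtain, from a $\Sigma^0_a$ code for $\{X : \phi(X,q)\}$, a $\Sigma^0_a$ code for an open set $U_q$ together with a uniformly $\Sigma^0_a$ sequence of codes for dense open sets $D_{n,q}$ such that for every $X \in \bigcap_n D_{n,q}$, we have $\phi(X,q) \iff X \in U_q$. The uniformity is automatic from inspection of the previous proof: the recursion on the rank of $\phi$ only uses the structure of $\phi$ and passes the parameter $q$ through untouched, so all the codes produced are uniformly $\Sigma^0_a$ in $q$.

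Next, I would use $a$-genericity. Each $D_{n,q}$ is a dense $\Sigma^0_a$ set of strings, so an $a$-generic $X$ must meet it (since $X$ cannot avoid a dense set). Therefore for every $q$ and $n$, $X \in D_{n,q}$, and consequently for every $q$, $\phi(X,q) \iff X \in U_q$.

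Finally, since $U_q$ is coded by a $\Sigma^0_a$ set of strings, uniformly in $q$, I define $R(p,q)$ to be the $\Sigma^0_a$ predicate expressing that $p$ belongs to the code of $U_q$. Then $X \in U_q$ is equivalent to $\exists n\, R(X\upharpoonright n, q)$, and combining with the previous equivalence gives
\[
\{q : \phi(X,q)\} = \{q : \exists n\, R(X\upharpoonright n, q)\}
\]
for every $a$-generic $X$, as required. The only subtle point is tracking uniformity of the recursion in the previous proposition through a parameter; no new ideas beyond careful bookkeeping are needed.
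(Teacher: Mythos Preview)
Your proposal is correct and follows essentially the same route as the paper: apply the previous proposition uniformly in $q$ to obtain $U_q$ and dense $\Sigma^0_a$ sets, observe that an $a$-generic meets every dense $\Sigma^0_a$ set, and take $R(p,q)$ to assert that $p$ lies in (the code of) $U_q$. The paper's proof is terser but identical in substance.
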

\begin{proof}
By the previous proposition, uniformly in $q$ there is a code for a
$\Sigma^0_a$ set $U_q \subseteq 2^{<\omega}$
and a sequence of $\Sigma^0_a$ sets $D_k \subseteq 2^{<\omega}$
such that each $D_k$ is dense and for any $X$ that meets 
each $D_k$, 
$X$ meets $U_q$ if and only if $\phi(X,q)$.  Thus 
$R(r, q)$ can be taken to be 
$\exists p(p \in U_q \text{ and } p \prec r)$.
\end{proof}

Now we review some notions from 
higher genericity.  We assume a general familiarity with
 hyperarithmetic theory, and refer the reader to \cite{sacksHRT}
 for definitions and details.  For $G \in 2^\omega$,
 it is well-known that an element $X$ of
 $2^\omega$ is $\Delta^1_1(G)$ if and only if it is $HYP(G)$,
 if and only if there is some $b \in \kO^G$ such that
 $X \leq_T H_b^G$.  

 Recall that if $\Gamma$ is a pointclass, 
$X\in 2^\omega$ is called $\Gamma$-generic
 if $X$ meets or avoids every open set $U$ with a code in $\Gamma$.
 (We have already seen this in the case $\Gamma = \Sigma^0_a$.)
 We are interested in $\Delta^1_1$-generics $G$ with the additional 
property that $\omega_1^{ck} = \omega_1^G$.  By \cite{greenbergmonin}, 
these are precisely the $\Sigma^1_1$-generics.  
However, for our purposes the formal definition of $\Sigma^1_1$-genericity 
seems less useful than the ``$\Delta^1_1$-generic and $\omega_1^{ck}$-preserving'', 
and indeed we never use the equivalence with $\Sigma^1_1$-genericity 
in any way other than as an (accurate) notational shorthand.

The following three propositions must be 
folklore, but we give their proofs here.  Recall that $A$ and $B$ 
are relatively $\Gamma$-generic if $A$ is $\Gamma(B)$-generic 
and $B$ is $\Gamma(A)$-generic.
\begin{proposition}\label{prop:10}
For $G_0,G_1 \in 2^\omega$, we have 
$G_0 \oplus G_1$ is $\Sigma^1_1$-generic if and only if 
$G_0$ and $G_1$ are relatively $\Sigma^1_1$-generic.
\end{proposition}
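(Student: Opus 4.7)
The plan is to use the characterization recalled just above the statement (from \cite{greenbergmonin}) that $X$ is $\Sigma^1_1$-generic iff $X$ is $\Delta^1_1$-generic and $\omega_1^X = \omega_1^{ck}$, and to handle the $\omega_1$-preservation and genericity halves separately. The $\omega_1$-preservation half is straightforward: the condition $\omega_1^{G_0 \oplus G_1} = \omega_1^{ck}$ forces $\omega_1^{G_0} = \omega_1^{G_1} = \omega_1^{ck}$ and $\omega_1^{G_0 \oplus G_1} = \omega_1^{G_0} = \omega_1^{G_1}$, which are exactly the preservation conditions required by mutual $\Sigma^1_1$-genericity; conversely, since $\Sigma^1_1(G_1)$-genericity of $G_0$ already implies absolute $\Sigma^1_1$-genericity of $G_0$ (and so $\omega_1^{G_0} = \omega_1^{ck}$), those conditions suffice to recover $\omega_1^{G_0 \oplus G_1} = \omega_1^{ck}$.

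For the genericity half, stratify: every $\Delta^1_1$ set is $\Sigma^0_a$ for some $a \in \kO$, so $\Delta^1_1$-genericity equals $a$-genericity for all $a \in \kO$; and under $\omega_1$-preservation, $\Delta^1_1(G_1)$-genericity of $G_0$ likewise reduces to $\Sigma^0_a(G_1)$-genericity for all $a \in \kO$, since $\kO^{G_1}$ and $\kO$ enumerate the same ordinals. It therefore suffices to prove, for each fixed $a \in \kO$, the classical Cohen-forcing mutuality: $G_0 \oplus G_1$ is $a$-generic iff $G_0$ is $\Sigma^0_a(G_1)$-generic and $G_1$ is $\Sigma^0_a(G_0)$-generic.

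For the forward direction at level $a$, given a target $U = \{p_0 : \phi(G_1, p_0)\}$ for a $\Sigma^0_a$ formula $\phi$, Proposition~\ref{prop:agen} supplies a $\Sigma^0_a$ formula $R(r, p_0)$ (upward closed in $r$ by inspection of its proof) such that for $a$-generic $Z$, $\phi(Z, p_0) \iff \exists n R(Z \upharpoonright n, p_0)$. Apply $a$-genericity of $G_0 \oplus G_1$ to the $\Sigma^0_a$ set $W = \{(p_0, p_1) : R(p_1, p_0)\}$: meeting $W$ at $(p_0, p_1) \prec (G_0, G_1)$ immediately gives $\phi(G_1, p_0)$, hence $G_0$ meets $U$; avoiding $W$ at $(p_0^*, p_1^*)$ yields $\neg R(p_1', p_0')$ for all $p_0' \succeq p_0^*$ and $p_1' \succeq p_1^*$, and upward closure propagates this to $\neg R(G_1 \upharpoonright n, p_0')$ for every $n$ and every $p_0' \succeq p_0^*$, whence $\neg \phi(G_1, p_0')$ for all such $p_0'$ and $G_0$ avoids $U$ at $p_0^*$. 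The backward direction is symmetric: a $\Sigma^0_a$ set $V$ in the product space induces the $\Sigma^0_a(G_1)$ open set $U(G_1) = \{p_0 : \exists p_1 \prec G_1, (p_0, p_1) \in V\}$; meeting gives joint meeting, and avoiding at $p_0^*$ produces the companion $\Sigma^0_a$ set $V' = \{p_1 : \exists p_0' \succeq p_0^*, (p_0', p_1) \in V\}$, which $G_1$ does not meet, so by $a$-genericity (a consequence of $\Sigma^0_a(G_0)$-genericity) $G_1$ avoids $V'$ at some $p_1^* \prec G_1$, and $(p_0^*, p_1^*)$ then witnesses the joint avoidance of $V$.

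The main obstacle is just bookkeeping ordinal notations across $\kO$, $\kO^{G_0}$, and $\kO^{G_1}$, but this is harmless once $\omega_1$-preservation is in hand, since all three enumerate the same ordinals below $\omega_1^{ck}$.
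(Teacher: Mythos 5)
Your proof is correct and follows essentially the same route as the paper: both use the Greenberg--Monin characterization to split the claim into an $\omega_1^{ck}$-preservation part and a $\Delta^1_1$-genericity part, reduce the latter to $a$-genericity for each $a \in \kO$ (using $\omega_1$-preservation to identify $\kO^{G_1}$ with $\kO$), and then invoke level-by-level mutual genericity. The only difference is that the paper cites the finite-level mutuality theorem of \cite{ARC} and observes via Proposition \ref{prop:agen} that it lifts to ordinal levels, whereas you inline that same argument explicitly (and you use the relativized form of the Greenberg--Monin equivalence for the preservation direction, which the paper's footnote shows how to avoid but accepts as legitimate).
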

\begin{proof}
Consider the argument in \cite[Thm. 8.20.1]{ARC} (originally 
due to \cite{Yu2006}), where  
it is shown that $A\oplus B$ is $n$-generic if and only if 
$A$ and $B$ are relatively $n$-generic.  Observe that 
at no point do they make use of the fact that $n$ is finite,
and the same argument goes through if $n$ is replaced with 
any $a \in \kO$.  (Proposition \ref{prop:agen} is used in
the $a \in \kO$ case.)   Therefore the same argument 
shows that $A\oplus B$ is $a$-generic if and only if 
$A$ and $B$ are relatively $a$-generic.
  Observe that 
$A$ is $\Delta^1_1$-generic 
if and only if $A$ is $a$-generic for all $a \in \kO$.

Now suppose that $G_0 \oplus G_1$ is $\Sigma^1_1$-generic.
We will show that $G_0$ is $\Sigma^1_1(G_1)$-generic. 
We have $\omega_1^{G_0\oplus G_1} = \omega_1^{ck} = \omega_1^{G_1}$, 
so it suffices to show that $G_0$ is $\Delta^1_1(G_1)$-generic, 
or equivalently, that $G_0$ is $a$-generic relative to $G_1$ 
for all $a \in \kO$ 
(here we use the fact that $\omega_1^{G_1} = \omega_1^{ck}$).  
This follows from the previous 
paragraph because $G_0\oplus G_1$ is $a$-generic.

On the other hand, if $G_0$ and $G_1$ are relatively $\Sigma^1_1$-generic,
then in particular each is $\Sigma^1_1$-generic,  so 
$\omega_1^{G_0} = \omega_1^{G_1} = \omega_1^{ck}$, and 
by relative $\Sigma^1_1$-genericity, we also have 
$\omega_1^{G_0\oplus G_1} = \omega_1^{ck}$.\footnote{Although
\cite{greenbergmonin} does relativize, the conclusions here can be established
 without using that relativization. It 
 suffices to show that at least one of $G_0$, $G_1$ is $\omega_1^{ck}$-preserving.
 Suppose that $G_0$ computes a linear order 
 of order type $\omega_1^{ck}$.  Then $\{X : X \text{ computes a linear order of order type $\omega_1^{ck}$}\}$ is $\Delta^1_1(G_0)$ and meager, so $G_1$ 
 does not belong to it.  Thus $\omega_1^{G_1} = \omega_1^{ck}$.}
  Therefore it suffices 
to show that $G_0 \oplus G_1$ is $\Delta^1_1$-generic, or 
equivalently, that it is $a$-generic for all $a \in \kO$.  
This follows because $G_0$ and $G_1$ are relatively $a$-generic 
for all $a \in \kO$.
\end{proof}

The following two propositions will be used later in a relativized form.  To 
reduce clutter, we do not write the relativized form, but the reader 
can verify that all the arguments relativize.

\begin{proposition}\label{prop:9}
If $G_0\oplus G_1$ is $\Sigma^1_1$-generic, then 
$\Delta^1_1(G_0) \cap \Delta^1_1(G_1) = \Delta^1_1$.
\end{proposition}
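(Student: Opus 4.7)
The plan is to produce an explicit $\Delta^1_1$ definition of any $X \in \Delta^1_1(G_0)\cap\Delta^1_1(G_1)$, exploiting mutual genericity via Proposition~\ref{prop:10}. Since $G_0\oplus G_1$ is $\Sigma^1_1$-generic, $\omega_1^{G_0}=\omega_1^{G_1}=\omega_1^{ck}$, so standard hyperarithmetic theory lets me fix a single notation $a\in\kO$ and Turing indices $e,i$ with $X=\Phi_e^{H_a^{G_0}}=\Phi_i^{H_a^{G_1}}$.

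The central object is the Borel set
\[
S=\bigl\{(Y_0,Y_1)\in 2^\omega\times 2^\omega : \Phi_e^{H_a^{Y_0}}=\Phi_i^{H_a^{Y_1}} \text{ as total functions}\bigr\},
\]
which has a uniformly hyperarithmetic Borel code and contains $(G_0,G_1)$. Because $G_0\oplus G_1$ avoids every hyperarithmetic meager set, the property of Baire forces $S$ to be comeager in some basic open rectangle $[\sigma_0]\times[\tau_0]$ with $\sigma_0\prec G_0$ and $\tau_0\prec G_1$. Using this rectangle I would define
\[
X^{\ast}(n)=v\ \Longleftrightarrow\ A_{n,v}:=\{Y\in[\sigma_0]:\Phi_e^{H_a^{Y}}(n)=v\}\text{ is comeager in }[\sigma_0]
\]
and verify that $X^{\ast}=X\in\Delta^1_1$.

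Three points then need checking. (i) $X^{\ast}$ is well-defined: Kuratowski--Ulam applied to the comeagerness of $S$ in $[\sigma_0]\times[\tau_0]$ makes $\Phi_e^{H_a^Y}$ total for comeager $Y\in[\sigma_0]$, so $A_{n,0}\cup A_{n,1}$ is comeager in $[\sigma_0]$ and, being disjoint, exactly one of $A_{n,0}, A_{n,1}$ is comeager. (ii) $X^{\ast}=X$: by Proposition~\ref{prop:10} each of $G_0,G_1$ is $\Sigma^1_1$-generic, so since $A_{n,X(n)}$ is hyperarithmetic and contains $G_0$, it cannot be the meager member of the dichotomy and so must be the comeager alternative. (iii) $X^{\ast}\in\Delta^1_1$, which finishes the proof.

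The main obstacle I anticipate is step (iii): justifying that the predicate ``the Borel set with hyperarithmetic code $T$ is comeager in $[\sigma_0]$'' is itself hyperarithmetic, uniformly in $T$. I would handle this by effective transfinite recursion on the rank of $T$ --- in the spirit of Proposition~\ref{prop:Hformula} --- producing a hyperarithmetic Baire approximation to $|T|$, which reduces comeagerness of $|T|$ in $[\sigma_0]$ to the $\Pi^0_2$ statement that the approximating open set is dense in $[\sigma_0]$.
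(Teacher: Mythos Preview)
Your strategy coincides with the paper's through the first move: both fix $a\in\kO$ and indices with $X=\Phi_e^{H_a^{G_0}}=\Phi_i^{H_a^{G_1}}$, form the same set (the paper calls it $W$), and use the effective Baire property together with the $\Delta^1_1$-genericity of $G_0\oplus G_1$ to locate a basic rectangle in which the set is comeager. The endgames diverge. The paper picks any hyperarithmetic $c$-generic $Y\succ\sigma_0$ (with $c=a+O(1)$), argues via mutual genericity that $Y\oplus G_1$ is still $c$-generic and hence lies in $W$, and reads off $X=\Phi_e^{H_a^{Y}}\in\Delta^1_1$ in one stroke. Your route defines $X^\ast$ by a category predicate on the first coordinate alone and verifies (i)--(iii); this is legitimate but longer, and your step~(iii) is essentially the same effective Baire property already invoked to obtain the rectangle, so no new idea is needed there.

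There is one real slip in step~(i). From Kuratowski--Ulam you extract only that $\Phi_e^{H_a^Y}$ is \emph{total} for comeager $Y\in[\sigma_0]$, giving that $A_{n,0}\cup A_{n,1}$ is comeager, and then assert that disjointness forces exactly one of $A_{n,0},A_{n,1}$ to be comeager. That inference is invalid in general: two disjoint sets with the Baire property can each be comeager in half of $[\sigma_0]$, with neither comeager in $[\sigma_0]$. The fix is to draw a stronger conclusion from Kuratowski--Ulam: for comeager $Y\in[\sigma_0]$ the fiber $\{Z\in[\tau_0]:\Phi_e^{H_a^Y}=\Phi_i^{H_a^Z}\}$ is comeager in $[\tau_0]$, so any two such $Y,Y'$ share a common fiber point $Z$, whence $\Phi_e^{H_a^{Y}}=\Phi_i^{H_a^{Z}}=\Phi_e^{H_a^{Y'}}$. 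This \emph{constancy} of $\Phi_e^{H_a^{Y}}$ on a comeager subset of $[\sigma_0]$ is what makes one $A_{n,v}$ comeager, and with it your steps (ii) and (iii) go through. Indeed, once you have constancy you can shortcut (ii)--(iii) entirely: choose any hyperarithmetic $c$-generic $Y\in[\sigma_0]$, note that $Y$ lands in the (hyperarithmetic) comeager constancy set, and set $X=\Phi_e^{H_a^{Y}}$ --- which is essentially the paper's move.
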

\begin{proof}
If $X \in \Delta^1_1(G_0) \cap \Delta^1_1(G_1)$, then 
since $\omega_1^{G_0} = \omega_1^{G_1} = \omega_1^{ck}$, 
there are $a \in \kO$ and indices $e$ and $f$ such 
that $X = \Phi_e(H_a^{G_0}) = \Phi_f(H_a^{G_1})$.  Consider 
the set $$W =\{Y \oplus Z : \Phi_e(H_a^Y) = \Phi_f(H_a^Z)\}.$$  This 
set is $\Delta^1_1$, so it has the property of Baire, 
and in particular there is a $\Delta^1_1$ open set $V$ 
such that every sufficiently generic $Y\oplus Z$ 
is an element of $V$ 
if and only if it is an element of $W$.  Here the amount of genericity 
needed is not full $\Delta^1_1$-genericity, but rather 
$c$-genericity, where $c = a + O(1)$.  To see that $c$-genericity 
suffices, first use Proposition \ref{prop:Hformula} to write the 
defining property of $W$ as a $c$-ranked relativized 
formula of $L_{\omega_1,\omega}$, then apply Proposition \ref{prop:agen}.

Since $G_0\oplus G_1$ 
is $\Delta^1_1$-generic and in $W$, it is in $V$.  
Let $p, q \in 2^{<\omega}$ 
be such that $p\prec G_0, q\prec G_1$ and $p\oplus q \in V$.
Now let $Y$ be any $c$-generic, hyperarithmetic real with $p \prec Y$.  
Then 
since $G_1$ is $\Delta^1_1$-generic, it is $\Delta^1_1$-generic 
relative to $Y$, so in particular it is $c$-generic relative to $Y$. 
Using the ordinal version of \cite[Thm. 8.20.1]{ARC} a second time, we conclude that
$Y \oplus G_1$ is $c$-generic, and meets $V$.  Therefore, 
$Y \oplus G_1 \in W$, and we obtain a $\Delta^1_1$ formula for $X$,
that is, $X = \Phi_e(H_a^Y)$.
  \end{proof}

\begin{proposition}\label{prop:11}
Let $G_0$ be $\Sigma^1_1$-generic and $P$ a hyperarithmetic predicate.
If there is a $Y \in \Delta^1_1(G_0)$ 
such that $P(Y)$ holds,
then for all $\Delta^1_1$-generic $G_1$, there is a 
$Y \in \Delta^1_1(G_1)$ such that $P(Y)$ holds.
\end{proposition}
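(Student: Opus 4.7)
The plan is to encode the hypothesis as membership of $G_0$ in a single hyperarithmetic set $A\subseteq 2^\omega$, to use an effective Baire-category argument to extract an initial segment $p\prec G_0$ that forces $A$, and then to transfer the conclusion to an arbitrary $\Delta^1_1$-generic $G_1$ by pre-composing $G_1$ with $p$.

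Since $G_0$ is $\Sigma^1_1$-generic we have $\omega_1^{G_0}=\omega_1^{ck}$, so the hypothesized $Y\in\Delta^1_1(G_0)$ satisfying $P(Y)$ takes the form $Y=\Phi_e(H_a^{G_0})$ for some $a\in\kO$ and $e\in\omega$. Fix such a pair $(a,e)$ and define
\[A=\{Z\in 2^\omega:\Phi_e(H_a^Z)\text{ is total and }P(\Phi_e(H_a^Z))\}.\]
Because $P$ is hyperarithmetic and $H_a^Z$ is uniformly $\Delta^1_1$ in $Z$, the set $A$ is $\Delta^1_1$, hence $\Sigma^0_c$ for some $c\in\kO$. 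The effective Baire-property proposition preceding Proposition \ref{prop:agen}, applied at level $c$, yields a $\Sigma^0_c$ open set $U$ and a uniform sequence of $\Sigma^0_c$ dense open sets $(D_k)_k$ such that $Z\in A\iff Z\in U$ for every $Z\in\bigcap_kD_k$. Since $G_0$ is $c$-generic and lies in $A$, it meets every $D_k$ and therefore lies in $U$; fix a string $p\prec G_0$ with $p\in U$.

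Now let $G_1$ be an arbitrary $\Delta^1_1$-generic real and set $Z=p\concat G_1$. The continuous map $X\mapsto p\concat X$ pulls back every $\Delta^1_1$ dense open subset of $2^\omega$ to a $\Delta^1_1$ dense open subset (with $p$ as a computable parameter), so $Z$ is itself $\Delta^1_1$-generic, and in particular $c$-generic. Because $p\prec Z$ we have $Z\in U$, and because $Z$ meets each $D_k$ we conclude $Z\in A$; thus $Y':=\Phi_e(H_a^Z)$ is total and satisfies $P(Y')$. Finally $Z\equiv_T G_1$, since they differ only by the fixed finite string $p$, so $\Delta^1_1(Z)=\Delta^1_1(G_1)$ and $Y'\in\Delta^1_1(G_1)$ is the required witness.

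The only mildly delicate step is the effective Baire decomposition of $A$ at the correct level $c\in\kO$, which is what allows the ``forcing'' step producing $p$; the shift-invariance of $\Delta^1_1$-genericity under prefixing by $p$ is routine but deserves to be recorded explicitly. Notably, nothing in the argument requires that $G_1$ be $\omega_1^{ck}$-preserving, since the witness $Y'$ is produced uniformly from $G_1$ using only the fixed data $(a,e,p)$ distilled from $G_0$.
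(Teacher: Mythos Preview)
Your proof is correct and follows essentially the same approach as the paper: encode the hypothesis as membership in a hyperarithmetic set, use the effective Baire-category machinery to extract a forcing condition $p\prec G_0$, and transfer to $G_1$ via the shift $p\concat G_1$. The only cosmetic difference is that the paper existentially quantifies over $e$ in its predicate $R(X):=\exists e\,P(\Phi_e(H_a^X))$ and argues that $R$ holds of every shift $p\concat G_0$, whereas you fix $e$ and work directly with a single $p$ obtained from the Baire approximation; both routes are equivalent, and your version is if anything more explicit about the Baire-category step.
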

\begin{proof}
Since $\omega_1^{ck} = \omega_1^{G_0}$, there is some $a \in \kO$ 
and an index $e$ such that $Y = \Phi_e(H_a^{G_0})$.  Then 
$R(X) := \exists e P(\Phi_e(H_a^{X}))$ is a hyperarithmetic predicate 
that holds of $G_0$ and holds of $p\concat G_0$ for 
any $p \in 2^{<\omega}$.  Therefore, for any $\Delta^1_1$-generic 
$G_1$, $R(G_1)$ holds. 
\end{proof}

Finally, we remark that for any $Z$, the set of $\Delta^1_1(Z)$-generics 
is $\Sigma^1_1(Z)$.  This is because
$$X \text{ is $\Delta^1_1(Z)$-generic } \iff \forall Y \in \Delta^1_1(Z) [ X \text{ is 1-generic relative to $Y$}].$$

 \section{Completely determined Borel codes}

We propose the following variation on the definition of a Borel code.  We
shall see that when this variant is used, the unsatisfactory 
shortcut in
Proposition \ref{prop:unsatisfactory} vanishes, 
and indeed the reversal no longer holds. 

\begin{definition}
A labeled Borel code $T$ is called \emph{completely determined} if
  every $X\in 2^\omega$ has an evaluation map in $T$.  A \emph{completely determined
    Borel code} is a labeled Borel code that is completely determined.
\end{definition}

When we formalize statements in reverse mathematics, in order to not 
conflict with existing convention, we will 
say \emph{completely determined Borel set} to indicate when the formalized 
version of the statement should call for a completely determined Borel code. 

The following facts are immediate.
\begin{proposition} In $\RCA$,
\begin{enumerate}
\item If $T$ is a completely determined Borel code, 
then $T^c$ is also a completely determined Borel code.
\item For every completely determined Borel set $A$ and $X \in 2^\omega$,
either $X \in A$ or $X \not\in A$.
\end{enumerate}
\end{proposition}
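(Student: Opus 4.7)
The plan is to prove both parts directly from the definition of evaluation map, exploiting the fact that complementing a Borel code corresponds to pointwise negation of its evaluation maps.

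For part (1), I would fix an arbitrary $X \in 2^\omega$ and invoke complete determination of $T$ to obtain an evaluation map $f : T \to \{0,1\}$ for $X$ in $T$. Since $T^c$ has the same underlying tree as $T$, $\Delta^0_1$-comprehension (available in $\RCA$) allows me to define $g : T \to \{0,1\}$ by $g(\sigma) = 1 - f(\sigma)$. The verification that $g$ is an evaluation map for $X$ in $T^c$ proceeds case by case on the three defining clauses: at a leaf $\sigma$, $\ell^c(\sigma)$ codes the clopen complement of the set coded by $\ell(\sigma)$, so $g(\sigma) = 1 \Leftrightarrow f(\sigma) = 0 \Leftrightarrow X$ lies in the set coded by $\ell^c(\sigma)$; at a node labeled $\cap$ in $T$ (hence $\cup$ in $T^c$), $g(\sigma) = 1 \Leftrightarrow f(\sigma) = 0 \Leftrightarrow$ some $f(\sigma \concat n) = 0 \Leftrightarrow$ some $g(\sigma \concat n) = 1$; the dual $\cup$-to-$\cap$ case is symmetric. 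Since $X$ was arbitrary, $T^c$ is completely determined.

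For part (2), I would take a completely determined Borel code $T$ for $A$ and fix $X \in 2^\omega$. Complete determination supplies an evaluation map $f$ for $X$ in $T$, and the single bit $f(\lambda)$ resolves the dichotomy: if $f(\lambda) = 1$ then $X \in \makeset{T} = A$ by definition, while if $f(\lambda) = 0$ then the map $g = 1 - f$ produced in part (1) is an evaluation map for $X$ in $T^c$ with $g(\lambda) = 1$, placing $X$ in $\makeset{T^c}$, which is exactly what is meant by $X \notin A$.

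I anticipate no significant obstacle; the arguments simply unpack the definitions. The only point worth noting is that the passage from $f$ to $g$ stays within $\RCA$, since $g$ is $\Delta^0_1$-definable from the given total function $f$, so no comprehension beyond $\Delta^0_1$ is invoked.
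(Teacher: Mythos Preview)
Your proposal is correct and matches the paper's treatment: the paper simply declares both facts ``immediate'' and gives no further argument, and your write-up is exactly the unpacking of ``immediate'' that the paper has in mind (the key observation that $1-f$ is an evaluation map for $X$ in $T^c$ is even stated explicitly elsewhere in the paper). One small remark: in part (2) you identify ``$X \notin A$'' with ``$X \in \makeset{T^c}$'', which is the intended substantive reading here, though literally ``$X \in A$ or $X \notin A$'' is just an instance of excluded middle in the classical logic of $\RCA$.
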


With only a slight amount of effort, we also have the following.
\begin{proposition} In $\RCA$,
if $A$ is a completely determined Borel set and $h:2^\omega\rightarrow 2^\omega$ 
is continuous, then $h^{-1}(A)$ is a completely determined Borel set.
\end{proposition}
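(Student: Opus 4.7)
The plan is to produce, from a completely determined labeled Borel code $T$ for $A$, a completely determined labeled Borel code $T'$ for $h^{-1}(A)$ of essentially the same shape: keep every inner node of $T$ with its $\cup/\cap$ label, and replace each clopen leaf of $T$ by a short subtree coding the preimage of that clopen. Externally, $h^{-1}$ of a clopen set is clopen, but inside $\RCA$ one cannot effectively extract a clopen code for such a preimage (that would take some compactness along the lines of $\WKL$). What one can always do is write $h^{-1}(C_\sigma)$ as an effective open set, and this is already enough to be a labeled Borel code.

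Concretely, for each leaf $\sigma$ of $T$ labeled by a clopen $C_\sigma$ (coded by a finite $F_\sigma \subseteq 2^{<\omega}$), the code of the continuous function $h$ lets us uniformly enumerate the $\Sigma^0_1$ set
$$S_\sigma = \{p \in 2^{<\omega} : [p] \subseteq h^{-1}(C_\sigma)\},$$
and the continuity (plus totality) of $h$ gives $h^{-1}(C_\sigma) = \bigcup_{p \in S_\sigma}[p]$. I would build $T'$ by, at each former leaf $\sigma$, inserting a $\cup$-node whose children $\sigma \concat \langle n, s\rangle$ are leaves labeled by the basic clopen $[p_n]$ if $p_n$ has been enumerated into $S_\sigma$ by stage $s$ (fixing an effective listing $(p_n)_{n \in \omega}$ of $2^{<\omega}$), and by the empty clopen otherwise. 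This makes $T'$ a $\Delta^0_1$-definable labeled tree inside $\RCA$. It is well-founded in the model: any infinite path in $T'$ would have to either stay in the old $T$-part (contradicting well-foundedness of $T$) or descend into one of the rank-$1$ leaf-replacement subtrees (impossible).

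To verify that $T'$ is completely determined, fix $X \in 2^\omega$. Continuity of $h$ produces $Y = h(X)$ inside $\RCA$, and the complete determinacy of $T$ supplies an evaluation map $f$ for $Y$ in $T$. I define $f':T' \to \{0,1\}$ by copying $f$ on the nodes inherited from $T$ (including former leaves, which are now $\cup$-nodes of $T'$) and by the usual clopen rule on the new leaves. The only non-trivial clause to check is at a former leaf $\sigma$: $f'(\sigma) = f(\sigma) = 1$ iff $Y \in C_\sigma$ iff $X \in h^{-1}(C_\sigma)$ iff some new leaf $\sigma \concat \langle n, s\rangle$ carries label $[p_n]$ with $p_n \prec X$, iff some child of $\sigma$ in $T'$ evaluates to $1$. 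The same equivalence, run from the leaves up, shows that $T'$ codes $h^{-1}(A)$. The only real subtlety is bookkeeping, namely turning each $\Sigma^0_1$ set $S_\sigma$ into a $\Delta^0_1$-definable labeled subtree inside $\RCA$, which is exactly what the two-index $(n,s)$ parametrisation with $\emptyset$ fillers handles; everything else — computing $h(X)$, extending $f$, checking the evaluation-map clauses — is routine.
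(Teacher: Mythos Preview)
Your proposal is correct and follows the same approach as the paper: leave the interior of $T$ intact, replace each clopen leaf $\sigma$ by a $\cup$-node over an effective enumeration of basic clopens whose union is $h^{-1}(C_\sigma)$, and extend an evaluation map for $h(X)$ in $T$ to one for $X$ in the new tree. One small caveat: the specific set $S_\sigma = \{p : [p]\subseteq h^{-1}(C_\sigma)\}$ you name is not obviously $\Sigma^0_1$ in $\RCA$ alone (showing that it is uses the same $\WKL$-flavored compactness you were careful to avoid when declining to extract a clopen code for the preimage); the paper instead enumerates the directly c.e.\ set $\{p_i : (p_i,q_i)\text{ is in the code of }h\text{ and }[q_i]\subseteq C_\sigma\}$, which has the same union and makes the bookkeeping go through without issue.
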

\begin{proof}
Let $T$ be a completely determined Borel code and $h:2^\omega\rightarrow 2^\omega$ 
a continuous function. Then $h$ is encoded by a sequence of pairs 
$(p_1,q_1), (p_2,q_2)\dots$ from $2^{<\omega}\times 2^{<\omega}$, which are compatible in the sense that 
$p\preceq p' \implies q\preceq q'$ whenever $(p,q),(p',q')$ are in $h$.  If $(p,q)$ 
is in $h$, 
it means that $p \prec X$ implies that $q \prec h(X)$.  For $h$ to be well-defined, 
we must have that for each $X$, there are arbitrarily long $q$ for which $q \prec h(X)$. 
Define $S$ by starting with $S=T$ and 
modifying each leaf $\sigma \in T$ as follows:
\begin{enumerate}
\item In $S$, $\sigma$ is a union.
\item For each $n$, $\sigma \concat n \in S$ and is a leaf.
\item If $U$ is the clopen set attached to $\sigma$ in $T$, 
let $\sigma\concat n$ be labeled with a code for the clopen 
subset of $h^{-1}(U)$ defined by
$$\cup \{[p_i] : (p_i,q_i) \in h, i< n, [q_i] \subseteq U\}$$
\end{enumerate}
We claim that $S$ is completely determined and $X \in \makeset S$ if and only 
if $h(X) \in \makeset{T}$.  Let $f$ be an evaluation map for 
$h(X)$ in $T$.  We claim that $f$ can be extended to 
an evaluation map for $X$ in $S$ by adding $f(\sigma\concat n) = 1$ 
if and only if $X$ is in the clopen set attached to $\sigma\concat n$ 
in $S$.  One only needs to check that the logic of the evaluation 
map is correct at each $\sigma$ which was a leaf in $T$.  
\end{proof}

The fact that Borel sets are closed under countable union, which 
was trivial using the standard definition of a Borel set,
has quite some power for completely determined Borel sets.

\begin{proposition}\label{prop:countable_union}
In $\RCA$, the statement ``A countable union of completely determined Borel 
sets is a completely determined Borel set'' is equivalent to $\LwCA$.
\end{proposition}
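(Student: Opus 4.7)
For the forward direction, I would assume $\LwCA$ and let $(T_n)_{n \in \omega}$ be a sequence of completely determined Borel codes. The plan is to form the union code $T$---whose root is a union node with children heading the subtrees $T_n$---and show $T$ is completely determined. Fix any $X \in 2^\omega$. For each non-root node $\tau$ of $T$, sitting inside some $T_n$ at node $\sigma$, let $\phi_\tau^X$ denote the $L_{\omega_1,\omega}$ formula obtained from the subtree of $T_n$ rooted at $\sigma$ by replacing each leaf's clopen label $U$ with the truth value of $X \in U$. Each $\phi_\tau^X$ is completely determined, because any evaluation map for $X$ in $T_n$ restricts to one for $\phi_\tau^X$. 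Applying $\LwCA$ to the uniformly presented sequence $\{\phi_\tau^X\}_\tau$ yields $A = \{\tau : \phi_\tau^X \text{ is true}\}$, from which I define $f : T \to \{0,1\}$ by $f(\tau) = 1$ if and only if $\tau \in A$ for $\tau \neq \lambda$, and $f(\lambda) = 1$ if and only if some child of $\lambda$ lies in $A$. That $f$ satisfies the union, intersection, and leaf conditions of an evaluation map then follows directly from the recursive construction of $\phi_\tau^X$.

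For the converse, I would assume the closure principle and let $(\phi_i)_{i \in \omega}$ be a sequence of completely determined $L_{\omega_1,\omega}$ formulas. The plan is to build a Borel code $T_i$ from $\phi_i$ by replacing each leaf labeled \texttt{true} with the clopen set $2^\omega$ and each leaf labeled \texttt{false} with the clopen set $\emptyset$. Since the leaf values no longer depend on $X$, any evaluation map for $\phi_i$ also serves as an evaluation map for $T_i$ at every $X$, so each $T_i$ is completely determined. By hypothesis, $T = \bigcup_i T_i$ is then a completely determined Borel code, and I can fix any $X$ and let $h$ be its evaluation map in $T$. The value $h(\langle i \rangle)$ will equal $1$ precisely when $\phi_i$ is true, so $\Delta^0_1$-comprehension applied to $h$ yields $\{i : \phi_i \text{ is true}\}$.

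The main conceptual point to watch in either direction is that complete determinacy provides only the existence of evaluation maps, not a uniformly presented sequence of them. Both arguments sidestep this by constructing a single composite object---the union code $T$---and extracting all the needed truth values from one evaluation map together with a single application of $\LwCA$ (in the forward direction) or $\Delta^0_1$-comprehension (in the reverse direction). I would expect no serious obstacles beyond the usual care needed to verify that the syntactic operations on Borel codes are faithfully implemented within $\RCA$.
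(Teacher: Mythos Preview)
Your proposal is correct and follows essentially the same approach as the paper's proof: in both directions you form the union code $T$, and for the forward implication you apply $\LwCA$ to the family of subtree formulas $\phi_{k,\sigma}^X$ to extract an evaluation map, while for the reverse you read off $\{i : \phi_i \text{ is true}\}$ from a single evaluation map on $T$. The paper's argument is identical in structure, differing only in notation and the level of detail.
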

\begin{proof}
If $\{T^k : k \in \mathbb N\}$ are completely determined Borel codes,
and
$T = \{\lambda\} \cup \{\langle k \rangle \concat \sigma : 
\sigma \in T^k\}$, we claim that, assuming $\LwCA$, 
$T$ is completely determined.
Fixing $X$, let $\phi_{k,\sigma}$ be the formula obtained by 
replacing each clopen set at each leaf of $T^k_\sigma$ 
by {\tt true} or {\tt false} according to whether $X$ 
is in each clopen set.  Any evaluation map for $X$ in 
$T^k$ can be restricted to an evaluation map for $X$ in 
$T^k_\sigma$, which is an evaluation map for $\phi_{k,\sigma}$, 
so all these formulas are completely determined.  One obtains an 
evaluation map for $X$ in $T$ by letting 
$f(\sigma) = 1$ if and only if $\phi_{k,\sigma}$ is true, 
and then non-uniformly filling in $f(\lambda)$ to its 
unique correct value.

Conversely, if $\{\phi_k : k \in \mathbb N\}$ 
are completely determined, these formulas can be modified at the 
leaves to become completely determined Borel codes $T^k$ 
for $\emptyset$ or 
$2^\omega$ according to whether they are true or false.
Defining $T$ as above, any evaluation map $f$ for $T$ 
satisfies $f(\langle k\rangle) = 1$ if and only if $\phi_k$ is true.
\end{proof}

Now we consider the completely determined variant of $\mathsf{PB}$.

\begin{definition} Let $\DPB$ be the statement ``Every completely determined
    Borel set has the property of Baire.''
    \end{definition}

Our main question is: what is the reverse mathematics strength of $\DPB$?

\begin{proposition}\label{prop:dpb_implies_lwca}
 In
$\RCA$, $\DPB$ implies $\LwCA$.
\end{proposition}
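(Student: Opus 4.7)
The plan is to derive $\LwCA$ by packaging a sequence of completely determined $L_{\omega_1,\omega}$-formulas into a single completely determined Borel code whose property of Baire reveals all the truth values at once. Given a sequence $(\phi_i)_{i\in\omega}$ of completely determined formulas together with their evaluation maps $(g_i)_{i\in\omega}$, fix pairwise disjoint nonempty clopen sets $(C_i)_{i\in\omega}$ of $2^\omega$ (for instance, $C_i = [p_i]$ where $p_i$ consists of $i$ ones followed by a zero). Let $T_i$ be the labeled Borel code obtained from $\phi_i$ by replacing every {\tt true}-leaf by a clopen code for $2^\omega$ and every {\tt false}-leaf by a clopen code for $\emptyset$. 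Construct $T$ whose root is a $\cup$-node whose $i$th child $S_i$ is a $\cap$-node with two children: a leaf labeled by the clopen code for $C_i$, and the subtree $T_i$. Since each $T_i$ is well-founded in the ambient model, so is $T$. I would then verify in $\RCA$ that $T$ is completely determined: for any $X$, using $g_i$ on each $T_i$-subtree (note that $g_i$ serves as an evaluation map for $T_i$ for every $X$, since the leaf labels $2^\omega$ and $\emptyset$ force the values), placing $[X \in C_i]$ at each $C_i$-leaf, taking minima at the $S_i$-roots and the supremum at $\lambda$, one obtains an evaluation map that is $\Delta^0_1$-definable from $(X, T, (g_i))$; its value at $\lambda$ equals $g_{i_0}(\lambda)$ when $X \in C_{i_0}$ and $0$ otherwise.

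Applying $\DPB$ to $T$ yields a Baire approximation $(U, V, \{D_n\})$. For each $i$, I would computably search for the least $q$ extending the defining string of $C_i$ with $q \in U \cup V$; such $q$ exists by density. I claim $\phi_i$ is true precisely when $q \in U$. To see this, assume $q \in U$, and using $\Sigma^0_1$-induction extend $q$ to some $X \in \bigcap_n D_n$ by iteratively choosing the least extension in each $D_n$. Then $X$ lies in $U \cap \bigcap_n D_n \cap C_i$, so by the Baire approximation there is an evaluation map for $X$ in $T$ with value $1$ at the root; since the explicit map constructed above takes value $g_i(\lambda)$ at the root, it suffices to know that evaluation maps are unique.

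The key lemma, which I would prove in $\RCA$, is that evaluation maps for a labeled Borel code (for a fixed $X$) are unique. If two such maps $f_1, f_2$ differed at $\lambda$, then $A = \{\sigma \in T : f_1(\sigma) \neq f_2(\sigma)\}$ contains $\lambda$, and a case analysis at $\cup$- and $\cap$-nodes shows that every $\sigma \in A$ has a child in $A$: at a $\cup$-node where $f_1 = 1, f_2 = 0$, some child witnesses $f_1 = 1$ and all children have $f_2 = 0$, and dually for $\cap$. Selecting at each stage the least such child (a terminating $\Sigma^0_1$ search) and iterating by primitive recursion produces an infinite path in $T$, contradicting well-foundedness. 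Equipped with uniqueness, we conclude $g_i(\lambda) = 1$, so $\phi_i$ is true; the symmetric argument applied to $V$ and $T^c$ handles $q \in V$. Hence $\{i : \phi_i \text{ is true}\}$ coincides with $\{i : \text{the search yields } q \in U\}$, which exists by $\Delta^0_1$-comprehension. The most delicate point is the uniqueness lemma—without it, having merely $X \in \makeset T$ does not by itself pin down $g_i(\lambda) = 1$.
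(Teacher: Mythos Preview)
There is a genuine gap in your argument, and it occurs in the very first line: you assume you are given the sequence of evaluation maps $(g_i)_{i\in\omega}$ along with the formulas. The hypothesis of $\LwCA$ only says that each $\phi_i$ is completely determined, i.e., $\forall i\,\exists g_i$; it does \emph{not} hand you the uniform sequence $(g_i)_i$. Indeed, if the sequence were available, $\LwCA$ would be trivial in $\RCA$ with no appeal to $\DPB$: the set $\{i : g_i(\lambda)=1\}$ is $\Delta^0_1$ in $(g_i)_i$.

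This matters because an evaluation map is required to be \emph{total} on the code. In your tree $T$, the subtree $T_i$ has leaves labeled $2^\omega$ and $\emptyset$, so for every $X$ the restriction of any evaluation map to $T_i$ is literally $g_i$. Hence any evaluation map for $X$ in $T$ computes the entire sequence $(g_i)_i$, and you have not shown such an object exists. That the countable union of completely determined Borel codes need not be completely determined in $\RCA$ is exactly the content of Proposition~\ref{prop:countable_union}, which shows this closure is already equivalent to $\LwCA$.

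The paper's proof avoids this by replacing each {\tt true}-leaf of $\phi_k$ with the clopen set $[0^k1]$ (rather than $2^\omega$) and each {\tt false}-leaf with $\emptyset$. Then for any $X$ there is at most one $k_0$ with $X\in[0^{k_0}1]$; on every other $T_k$ all leaves evaluate to $0$, so the all-zero map works there, and only the single $g_{k_0}$ is needed to fill in $T_{k_0}$. Thus complete determination follows from the existence of each individual $g_k$. Your argument is easily repaired by making this change (use $C_i$ in place of $2^\omega$ at the leaves of $T_i$); the rest of your outline, including the $\RCA$-level uniqueness argument and the extraction from the Baire approximation, then goes through and matches the paper's approach.
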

\begin{proof}
Any sequence $\{\phi_k:k\in\mathbb N\}$
 of completely determined formulas of $L_{\omega_1,\omega}$ 
can be modified at the leaves
to produce a sequence of completely determined Borel codes 
which code either $[0^k1]$ or $\emptyset$ depending 
on whether $\phi_k$ is true or false.  The union of
these remains completely determined because each $X$ 
passes through at most one of these sets.  Any Baire 
approximation to $\cup_{k : \phi_k \text{ is true }} [0^k1]$
computes $\{k : \phi_k \text{ is true}\}$.
\end{proof}

This places $\DPB$ somewhere in the general area of 
$\ATR$ and the theories of hyperarithmetic analysis. 
If $\DPB$ were equivalent to $\LwCA$, our variant would 
be subject to the same kinds of critique that we made of the 
original definition (all the strength of the theorem 
coming essentially from Proposition \ref{prop:countable_union}).
However, it turns out $\DPB$ is equivalent to none of the 
principles mentioned so far.

When considering how to show that $\DPB$ is strictly
weaker than $\ATR$, it is informative to consider the
usual proof that every Borel set has the property of
Baire.  This proof uses arithmetic transfinite recursion
on the Borel code of the given set.   It constructs not only
a Baire code for the given set, but also Baire codes for all Borel
sets used to build up the given one.  Below, we give the name
\emph{Baire decomposition} to this extended object that
$\ATR$ would have created.  Superficially, $\DPB$
would seem weaker than the statement ``every completely determined Borel set
has a Baire decomposition'', and one might wonder whether
the additional information in the Baire decomposition
carries any extra strength.  The
purpose of the rest of this section is to show
that it does not (Proposition
\ref{prop:noextra}), and to mention exactly how a Baire
approximation is constructively obtained from a Baire
decomposition (Proposition \ref{prop:decomposition_implies_approximation}).
The point is that any model separating $\DPB$ from $\ATR$ will
need another method of producing an entire Baire decomposition,
not just the Baire approximation.

\begin{definition}
Let $T$ be a completely determined Borel code. A \emph{Baire decompositon} for $T$ is a collection of open sets $U_{\sigma}$ and $V_{\sigma}$ for $\sigma \in T$ such that 
for each $\sigma \in T$ and each $p \in 2^{< \omega}$, 
\begin{enumerate}
\item $U_{\sigma} \cup V_{\sigma}$ is dense and $U_{\sigma} \cap V_{\sigma} = \emptyset$, 
\item if $\sigma$ is a leaf, then $U_{\sigma}$ is dense in the clopen set $C$ coded by $\ell(\sigma)$ and $V_{\sigma}$ is dense in $C^c$, 
\item if $\sigma$ is a union node, then $U_{\sigma}$ is dense in $\bigcup_n U_{\sigma^{\smallfrown}n}$ and $\bigcup_n U_{\sigma^{\smallfrown}n}$ is dense in $U_\sigma$,
\item if $\sigma$ is an intersection node, then $V_{\sigma}$ is dense in $\bigcup_n V_{\sigma^{\smallfrown}n}$ and $\bigcup_n V_{\sigma^{\smallfrown}n}$ is dense in $V_\sigma$. 
\end{enumerate}
\end{definition}

\begin{proposition}[$\ACA$]\label{prop:noextra}
$\DPB$ implies that every completely determined Borel set has a Baire decomposition.
\end{proposition}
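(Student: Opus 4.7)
The plan is to produce, for each $\sigma \in T$, a Baire approximation $(U_\sigma, V_\sigma, \{D_{\sigma,n}\}_n)$ for $T_\sigma$, and then to argue that the open sets $\{U_\sigma, V_\sigma\}_{\sigma \in T}$ already constitute a Baire decomposition for $T$; the auxiliary dense sets $D_{\sigma,n}$ will be used only in the verification and then discarded.

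To extract the whole family of Baire approximations from a single invocation of $\DPB$, I first observe that each subtree $T_\sigma$ is itself a completely determined Borel code, since the restriction to $T_\sigma$ of an evaluation map for $X$ in $T$ is an evaluation map for $X$ in $T_\sigma$. I then fix a family of pairwise disjoint clopen cones $\{[q_\sigma] : \sigma \in T\}$ with dense union in $2^\omega$ (e.g.\ $q_{\sigma_i} = 0^i \concat 1$ for some enumeration of $T$), together with the homeomorphisms $g_\sigma \colon [q_\sigma] \to 2^\omega$ that strip the prefix $q_\sigma$. Using that $\DPB$ implies $\LwCA$ (Proposition \ref{prop:dpb_implies_lwca}) and hence, by Proposition \ref{prop:countable_union}, that countable unions of completely determined Borel sets are completely determined, I build a single completely determined Borel code $T^*$ whose set of points is $\bigcup_{\sigma \in T}(g_\sigma^{-1}(|T_\sigma|) \cap [q_\sigma])$. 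Applying $\DPB$ to $T^*$ yields a Baire approximation $(U^*, V^*, \{D_n^*\}_n)$; a routine calculation, using that each $g_\sigma$ is a homeomorphism, then shows that $U_\sigma := g_\sigma(U^* \cap [q_\sigma])$, $V_\sigma := g_\sigma(V^* \cap [q_\sigma])$, and $D_{\sigma,n} := g_\sigma(D_n^* \cap [q_\sigma])$ form a Baire approximation for $T_\sigma$ for every $\sigma \in T$.

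The verification that these open sets already satisfy the density conditions of a Baire decomposition rests on the Baire category theorem, which is available in $\ACA$: any meager open subset of $2^\omega$ is empty. Let $M := \bigcap_{\tau \in T, n} D_{\tau,n}$, which is comeager. For every $X \in M$ and every $\tau \in T$, the Baire approximation property gives $X \in U_\tau \iff X \in |T_\tau|$, so these truth values collectively define a valid evaluation map for $X$ in $T$. At any union node $\sigma$, it then follows that $X \in U_\sigma \iff \exists n\, X \in U_{\sigma \concat n}$ for all $X \in M$, and hence the two sets $U_\sigma \setminus \bigcup_n U_{\sigma \concat n}$ and $(\bigcup_n U_{\sigma \concat n}) \setminus U_\sigma$ are both contained in the meager set $2^\omega \setminus M$. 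Each of these two sets is the intersection of an open set with a closed set, so its interior is an open subset of itself, hence a meager open set, hence empty by Baire category. This is precisely condition (3) in the definition of Baire decomposition; condition (4) follows by a symmetric argument at intersection nodes, condition (2) follows similarly once one notes that $|T_\sigma|$ is clopen at a leaf, and condition (1) is already built into the Baire approximation property of each $(U_\sigma, V_\sigma, \{D_{\sigma,n}\}_n)$.

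The main obstacle is thus just the bookkeeping in the first step: organizing things so as to extract a simultaneous family of Baire approximations from $\DPB$, which nominally produces only one. This is what forces the route through $\LwCA$ and the auxiliary code $T^*$. Once that family is in hand, the density conditions come essentially for free from Baire category, with no further need for transfinite recursion.
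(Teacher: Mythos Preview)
Your overall strategy matches the paper's: build a single auxiliary code by placing a copy of each $T_\sigma$ into its own clopen cone, apply $\DPB$ once, and pull back through the homeomorphisms to obtain the family $(U_\sigma,V_\sigma)$. Your verification of conditions (2)--(4) via the Baire category theorem (a meager set of the form open-minus-open has empty interior, hence the two open sets are mutually dense in each other) is in fact somewhat cleaner than the paper's, which constructs explicit points $Y$ meeting the relevant comeager conditions.

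There is, however, a gap in the verification. You assert that for every $X \in M := \bigcap_{\tau,n} D_{\tau,n}$ and every $\tau$, the Baire approximation gives $X \in U_\tau \iff X \in \makeset{T_\tau}$. But the definition of a Baire approximation only yields the one-sided implications $X \in U_\tau \Rightarrow X \in \makeset{T_\tau}$ and $X \in V_\tau \Rightarrow X \in \makeset{T_\tau^c}$; nothing prevents an $X \in M$ from lying outside the merely-dense set $U_\tau \cup V_\tau$, and for such $X$ the biconditional can fail. Without the biconditional, neither inclusion between $U_\sigma$ and $\bigcup_n U_{\sigma\concat n}$ holds modulo $M$, so your meagerness claim for the symmetric difference does not follow as written. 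The fix is immediate: simply add each dense open set $U_\tau \cup V_\tau$ to the intersection defining $M$, after which your argument goes through.

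As a minor side remark, the detour through $\LwCA$ and Proposition~\ref{prop:countable_union} is unnecessary. Since the cones $[q_\sigma]$ are pairwise disjoint, for $X \in [q_\sigma]$ one builds an evaluation map for $X$ in $T^\ast$ directly: restrict an evaluation map for $g_\sigma(X)$ in $T$ to $T_\sigma$, and fill every other subtree with the identically-zero function, which is a valid evaluation map there because all its leaf labels evaluate to $0$ on $X$. This is exactly how the paper establishes complete determination of its auxiliary code.
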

\begin{proof}
  Let $T$ be a completely determined Borel code.
  Informally, we partition the space into countably many disjoint clopen
  pieces (plus one limit point) and put an isomorphic copy of the
  set coded by $T_\sigma$ in the $\sigma$th piece.  Then we show that
  a Baire approximation to this disintegrated set can be translated
  back to a Baire decomposition for the original set coded by $T$.

More formally, for any $p \in 2^{<\omega}$, let
$T[p]$ denote the labeled Borel code for $\{p\concat X : X \in |T|\}$.
This is an effective operation on codes.  Recall that
each leaf codes a clopen set by a finite list $F \subseteq 2^{<\omega}$.
By replacing each such $F$ with $\{p\concat q : q \in F\}$,
we achieve the desired effect.

For any $\sigma \in \omega^{<\omega}$, 
let $\ceil \sigma$ be a natural number which codes $\sigma$ 
in a canonical way.  Define $S$ to be the labeled Borel code 
$$S = \{\lambda\} \cup \{\ceil \sigma \concat \tau : \tau \in T_\sigma[0^{\ceil \sigma}1], \sigma \in T\}$$
where $\lambda$ is a $\cup$ and all other labels are inherited from 
the $T_\sigma[0^{\ceil \sigma}1]$.
Then $S$ is completely determined: for any $X$, if $X = 0^\omega$, then the 
identically zero map is an evaluation map for $X$; if $X = 0^n1\concat Y$, 
then if $f$ is an evaluation map for $Y$ in $T$ and $n = \ceil \sigma$, 
an evaluation map $g$ for $X$ in $S$ can be defined by letting 
$g(\ceil \sigma \concat \tau) = f(\sigma\concat\tau)$ on 
$$\{\ceil \sigma \concat \tau : \tau \in T_\sigma[0^{\ceil \sigma}1]\},$$
$g(\lambda) = f(\sigma)$, and $g$ identically zero elsewhere. 
Therefore, for all $Y$ and $\sigma$,
$$0^{\ceil \sigma} 1 \concat Y \in \makeset S \iff Y \in \makeset{T_\sigma}.$$

Now suppose that $(U,V,\{D_k\}_{k\in\omega})$ is a Baire approximation 
for $S$. Then define $U_\sigma = \{q : 0^{\ceil \sigma}1\concat q \in U\}$ 
and $V_\sigma = \{q : 0^{\ceil \sigma}1\concat q \in V\}$.  
We claim that $(U_\sigma,V_\sigma)_{\sigma \in T}$ is a Baire
decomposition for $T$.
Property
(1) of a Baire decomposition is clear.  
For property (2), this follows because if $[q]$ is contained in the 
clopen set $|T_\sigma|$, suppose for contradiction that there is 
$r$ extending $q$ with $[0^{\ceil \sigma}1\concat r] \subseteq V$.  Then for all $X\in [r]$,
we have $X \in |T_\sigma|$ and thus $0^{\ceil \sigma}1\concat X \in U$, a contradiction.  Therefore 
$V_\sigma \cap [q] = \emptyset$, so $U_\sigma$ is dense in $[q]$.  A similar 
argument applies to establish that $V_\sigma$ is dense in $[q]$ 
if $[q]$ is contained $|T_\sigma^c|$.
For property (3), 
letting $\sigma$ be a union node and $p \in 2^{<\omega}$, we will show 
that $U_\sigma$ is dense in $[p]$ if and only if $\cup_n U_{\sigma\concat n}$ 
is dense in $[p]$.
Suppose that $\cup_n U_{\sigma\concat n}$ is not dense in $[p]$.  
Let $q$ extend $p$ such that for all $n$, 
$U_{\sigma\concat n} \cap [q] = \emptyset$.  Then define $Y$ 
so that $q \prec Y$ and the following collection of comeager events occur:
\begin{enumerate}[(i)]
\item For all $n$, $Y \in V_{\sigma\concat n}$
\item\label{i2} For all $n$, $0^{\ceil {\sigma\concat n}}1\concat Y \in \cap_k D_k$
\item $Y \in U_\sigma \cup V_\sigma$
\item\label{i4} $0^{\ceil \sigma}1\concat Y \in \cap_k D_k$
\end{enumerate}
The first comeager event guarantees that $Y \in V_{\sigma\concat n}$ for 
all $n$.  Together with second comeager event this implies that 
$0^{\ceil{\sigma\concat n}}1\concat Y \not\in \makeset{S}$, and 
therefore $Y \not\in \makeset{T_{\sigma\concat n}}$. Therefore,
$Y \not\in \makeset{T_\sigma}$.  In the third dense event, 
if we had $Y \in U_\sigma$, the fourth comeager event would 
imply that $Y \in \makeset{T_\sigma}$; therefore it must be 
that $Y \in V_\sigma$, and so $U_\sigma$ is not dense in $[p]$. 
On the other hand, if $U_\sigma$ is not dense in $[p]$, 
then assuming $\cup_n U_{\sigma\concat n}$ is dense in $[p]$ 
leads to a contradiction, for we may similarly define $Y$ 
to meet $V_\sigma\cap [p]$ and $\cup_n U_{\sigma\concat n}$, 
while also satisfying (\ref{i2}) and (\ref{i4}).

The proof of (4) is similar to the proof of (3).
\end{proof}

Turning a Baire decomposition into a Baire approximation involves 
extracting the comeager set on which the approximation should 
hold.  The following proposition gives a canonical sequence of 
dense open sets which suffices for this.

\begin{proposition}[$\mathsf{ACA}_0$]\label{prop:decomposition_implies_approximation}
Let $T$ be a completely determined Borel code and $(U_{\sigma},V_{\sigma})_{\sigma \in T}$ 
be a Baire decomposition for $T$. Let $\{D_n\}_{n \in \omega}$ 
consist of the following dense open sets:
\begin{enumerate}
\item $U_{\sigma} \cup V_{\sigma}$ for $\sigma \in T$, 
\item $V_{\sigma} \cup \bigcup_n U_{\sigma^{\smallfrown}n}$ for 
$\sigma \in T$ a union node, and 
\item $U_{\sigma} \cup \bigcup_n V_{\sigma^{\smallfrown}n}$ for $\sigma \in T$ an intersection node.
\end{enumerate}
 Then, $(U_{\lambda}, V_{\lambda}, \{D_n\}_{n\in\omega})$
is a Baire approximation for $T$. 
\end{proposition}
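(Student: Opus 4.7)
The plan is, given $X \in \bigcap_n D_n$, to use the Baire decomposition to define a function $g : T \to \{0,1\}$ by $g(\sigma) = 1$ if $X \in U_\sigma$ and $g(\sigma) = 0$ if $X \in V_\sigma$, then show that $g$ is an evaluation map for $X$ in $T$, and finally appeal to uniqueness of evaluation maps in $\ACA$. Well-definedness of $g$ is immediate: for each $\sigma \in T$, the dense set $U_\sigma \cup V_\sigma$ appears among the $D_n$, so $X$ lies in at least one of $U_\sigma, V_\sigma$, while condition~(1) of a Baire decomposition says they are disjoint.

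To verify that $g$ is an evaluation map, I handle the three node types. For a leaf $\sigma$ coding a clopen set $C$, if $X \in U_\sigma \cap C^c$, pick a basic open $[q]$ with $q \prec X$ and $[q] \subseteq U_\sigma \cap C^c$ (using that $C$ is clopen); property~(2) gives $C^c \subseteq \overline{V_\sigma}$, so $[q] \cap V_\sigma \neq \emptyset$, contradicting $U_\sigma \cap V_\sigma = \emptyset$. Symmetrically one rules out $V_\sigma \cap C$, so $g(\sigma) = 1$ iff $X \in C$. For a union node $\sigma$, the dense set $V_\sigma \cup \bigcup_n U_{\sigma\concat n}$ meets $X$: if $X \in U_\sigma$ then $X \notin V_\sigma$, hence $X \in U_{\sigma\concat n}$ for some $n$. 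Conversely, if $X \in U_{\sigma\concat n} \cap V_\sigma$, take $[p] \subseteq U_{\sigma\concat n} \cap V_\sigma$ with $p \prec X$; property~(3) says $\bigcup_n U_{\sigma\concat n} \subseteq \overline{U_\sigma}$, so $[p] \cap U_\sigma \neq \emptyset$, again contradicting disjointness. The intersection case is entirely dual, using property~(4) and the dense set $U_\sigma \cup \bigcup_n V_{\sigma\concat n}$.

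Having shown that $g$ is an evaluation map, uniqueness in $\ACA$ forces $g$ to coincide with the unique evaluation map $f$ for $X$ in $T$. Thus $X \in U_\lambda$ yields $f(\lambda) = 1$, i.e.\ $X \in \makeset{T}$, while $X \in V_\lambda$ yields $f(\lambda) = 0$, so $1 - f$ is an evaluation map for $X$ in $T^c$ witnessing $X \in \makeset{T^c}$. The only real subtlety is keeping straight the two flavors of density in play — the global density of $U_\sigma \cup V_\sigma$ and of each $D_n$, versus the relative statements ``$U_\sigma$ is dense in $B$'' which I interpret as $B \subseteq \overline{U_\sigma}$ — but in each case the argument collapses to trapping a basic open neighborhood of $X$ inside both a $U$-set and its disjoint $V$-set, which is the only contradiction needed.
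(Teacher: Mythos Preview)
Your argument is correct, and in one respect it is cleaner than the paper's. The paper fixes $X \in \bigcap_n D_n$ and proves by arithmetic transfinite induction on the rank of $\sigma$ that $X \in U_\sigma \Rightarrow X \in \makeset{T_\sigma}$ and $X \in V_\sigma \Rightarrow X \in \makeset{T_\sigma^c}$; the inductive step for a union node with $X \in V_\sigma$ uses property~(3) in essentially the same way you do (from $[p]\subseteq V_\sigma$ one gets $\bigcup_n U_{\sigma\concat n}\cap [p]=\emptyset$, hence each $V_{\sigma\concat n}$ is dense in $[p]$, hence $X$ meets each $V_{\sigma\concat n}$), and then invokes the induction hypothesis. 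Your route sidesteps the transfinite induction entirely: you write down $g$ explicitly and verify the purely local conditions that make it an evaluation map, which can be checked node by node with no recursion on rank. That is a genuine, if modest, simplification.

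Two small remarks. First, your appeal to uniqueness is unnecessary: once $g$ is an evaluation map and $g(\lambda)=1$, the \emph{definition} of $X\in\makeset{T}$ is already satisfied; you never need to compare $g$ with any pre-existing $f$. (The hypothesis that $T$ is completely determined is in fact not used in either proof beyond being part of the statement.) Second, neither you nor the paper spells out why the sets in items (2) and (3) are dense; this follows from property~(1) together with the ``$\bigcup_n U_{\sigma\concat n}$ is dense in $U_\sigma$'' half of property~(3) (and dually), and is worth one sentence if you want the verification that $(U_\lambda,V_\lambda,\{D_n\})$ is a Baire code to be complete.
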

\begin{proof}
The properties of a Baire decomposition suffice to ensure that 
$(U_\lambda, V_\lambda,  \{D_n\}_{n\in\omega})$ is a Baire code.  
We must show that if $X \in \cap_n D_n$, then $X \in U_\lambda \implies 
X\in \makeset{T}$ and $X \in V_\lambda \implies X \in \makeset T^c$.
Fix $X\in \cap_n D_n$.  We prove by arithmetic transfinite induction 
that for all $\sigma \in T$, if $X \in U_\sigma$ then 
$X \in \makeset{T_\sigma}$ and if $X \in V_\sigma$ then 
$X \in \makeset{T_\sigma^c}$.  This holds when $\sigma$ is a leaf.

If $\sigma$ is a union node, suppose $X \in U_\sigma$.  
Then $X \not\in V_\sigma$, but 
$X \in V_\sigma \cup \bigcup_n U_{\sigma\concat n}$, so 
$X \in U_{\sigma\concat n}$ for some $n$.  Then the induction 
hypothesis gives us $X \in |T_{\sigma \concat n}|$, so $X \in |T_\sigma|$ 
since $\sigma$ is a union node.

On the other hand, if $X \in V_\sigma$, let $p \prec X$ be such that $p \in V_\sigma$.
Then $U_\sigma \cap [p] = \emptyset$, so 
$\cup_n U_{\sigma\concat n} \cap [p] =\emptyset$.  Recall that $U_{\sigma\concat n} \cup V_{\sigma\concat n}$ is dense by definition.  So for each $n$, 
$V_{\sigma \concat n}$ is dense in $[p]$.  Therefore, $X$ meets 
each $V_{\sigma\concat n}$, so by induction 
$X \in \makeset{T_{\sigma\concat n}^c}$ holds for all $n$.  
Therefore, $X \in \makeset{T_\sigma^c}$.

The case where $\sigma$ is an intersection node is similar.
\end{proof}

 \section{$\DPB$ does not imply $\ATR$}

 Our non-$\ATR$ method of producing a Baire decomposition involves
 polling sufficiently generic $X$ to see whether they are in or
 out of a given set.  For our purposes, sufficiently generic means 
 $\Sigma^1_1$-generic.

Let $G = \bigoplus_i G_i$ be a $\Sigma^1_1$ generic.  Let
$\mathcal M = \bigcup_n \Delta^1_1(\bigoplus_{i<n} G_i)$.
This is the model which will be used to separate $\DPB$ 
and $\ATR$.  But first, some lemmas.

\begin{lemma}\label{lem:LwCA}
$\mathcal M \models \LwCA$.  Furthermore, whenever 
$F \subseteq \omega$ is finite and 
the completely determined sequence 
of formulas $\{\phi_k : k \in \mathbb N\}$ is in 
$\Delta^1_1(\bigoplus_{i \in F} G_i)$, we also have 
$$\{ k : \phi_k \text{ is true in }M\} \in 
\Delta^1_1\left(\bigoplus_{i \in F} G_i\right).$$ 
\end{lemma}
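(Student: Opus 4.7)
The plan is to derive $\mathcal M \models \LwCA$ from the furthermore clause. Any completely determined sequence $\{\phi_k\}$ lying in $\mathcal M$ belongs to $\Delta^1_1(\bigoplus_{i<n} G_i)$ for some $n$, and the furthermore clause then places $\{k : \phi_k \text{ is true in } \mathcal M\}$ in the same $\Delta^1_1$ class, hence in $\mathcal M$. So I focus on establishing the furthermore clause.

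Fix a finite $F$, set $Z = \bigoplus_{i \in F} G_i$, and suppose $\{\phi_k\}$ is a $\Delta^1_1(Z)$ sequence of $L_{\omega_1,\omega}$ formulas that is completely determined in $\mathcal M$. The central observation is that truth in $\mathcal M$ and truth externally agree on each $\phi_k$. Indeed, any evaluation map in $\mathcal M$ is an evaluation map externally, so every $\phi_k$ is externally completely determined, and since $\ACA$ holds externally the external evaluation map is unique. Thus the evaluation map that $\mathcal M$ provides is the unique external one, and its value at $\lambda$ is absolute.

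With that in place, I express the target set in two complementary ways. The set $\{k : \phi_k \text{ is externally true}\}$ is defined by the $\Sigma^1_1(Z)$ statement ``there exists an evaluation map $f$ for $\phi_k$ with $f(\lambda) = 1$'', and $\{k : \phi_k \text{ is externally false}\}$ by the analogous $\Sigma^1_1(Z)$ statement with $f(\lambda) = 0$. External complete determinacy makes these sets cover $\mathbb N$, and external uniqueness of evaluation maps makes them disjoint, so each is the complement of the other. Hence both are $\Delta^1_1(Z)$, and by the previous paragraph the first coincides with $\{k : \phi_k \text{ is true in } \mathcal M\}$.

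The delicate point I anticipate is the distinction between ``completely determined in $\mathcal M$'' (the hypothesis) and ``externally completely determined'' (what the complementation argument actually needs). No uniformly $\mathcal M$-indexed sequence of evaluation maps is assumed, but pointwise existence in $\mathcal M$ is enough to pass externally and, via uniqueness, to transfer truth values in both directions; no property of the $\Sigma^1_1$-generic $G$ beyond the fact that $\mathcal M$ is exhausted by hyperarithmetic closures of its finite joins is used.
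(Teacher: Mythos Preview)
Your argument has a genuine gap at the step ``since $\ACA$ holds externally the external evaluation map is unique.''  Uniqueness of evaluation maps is proved in $\ACA$ \emph{under the hypothesis that the tree is well-founded}; the induction that establishes uniqueness goes along the tree.  Externally, however, the trees underlying the $\phi_k$ need not be well-founded.  The hypothesis of the lemma only gives that $\mathcal M$ believes them well-founded, and $\mathcal M$ is not a $\beta$-model: the very next proposition in the paper shows that $\mathcal M$ believes pseudo-ordinals in $\kO^\ast$ are ordinals.  On a truly ill-founded tree, evaluation maps need not be unique (e.g., on a single infinite branch alternating $\cup/\cap$, both the constant-$0$ and constant-$1$ maps satisfy the local conditions), so your two $\Sigma^1_1(Z)$ sets may overlap and the $\Delta^1_1$ conclusion fails.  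Moreover, even if both sets were $\Delta^1_1(Z)$, the set $\{k:\exists f\ f(\lambda)=1\}$ could strictly contain $\{k:\phi_k\text{ is true in }M\}$.  This is not a minor oversight: the lemma is applied in the proof that $\mathcal M\models\DPB$ to formulas built from an arbitrary completely determined Borel code $T\in M$, which may well be truly ill-founded.

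Your final paragraph is the tell: you note that your argument uses nothing about the $\Sigma^1_1$-genericity of $G$.  That is exactly what is missing.  The paper's proof uses genericity essentially, via (the relativized forms of) Propositions~\ref{prop:10}, \ref{prop:9}, and~\ref{prop:11}: first to show that an evaluation map for $\phi_k$, which a priori lies only in $\Delta^1_1(G)$, can be found in $\Delta^1_1(G_j\oplus Z)$ for each $j\notin F$; then to intersect over two such $j$ and land in $\Delta^1_1(Z)$.  Uniqueness is invoked only \emph{inside $\mathcal M$} (where the trees are believed well-founded and $\ACA$ holds), to identify the evaluation maps obtained from different $G_j$.  Once every $\phi_k$ is completely determined in $HYP(Z)$, the conclusion follows because $\LwCA$ is a theory of hyperarithmetic analysis and so holds in $HYP(Z)$.
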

\begin{proof}
We begin with three facts. First, applying Proposition \ref{prop:10} to
the decomposition $G = \bigoplus_{i \in F} G_i \oplus \bigoplus_{i
  \not \in F} G_i$, we conclude that $\bigoplus_{i \not \in F} G_i$ is
$\Sigma^1_1(\bigoplus_{i \in F} G_i)$-generic.

Second, fix $j \not \in F$. Applying Proposition \ref{prop:10} to $G =
G_j \oplus \bigoplus_{i \neq j} G_i$, we have that $G_j$ is
$\Sigma^1_1(\bigoplus_{i \neq j} G_i)$-generic and hence $G_j$ is
$\Sigma^1_1(\bigoplus_{i \in F} G_i)$-generic.

Third, fix $j_0, j_1 \not \in F$ with $j_0 \neq j_1$. By the same
argument, we have that $G_{j_0}$ is $\Sigma^1_1(G_{j_1} \oplus
\bigoplus_{i \in F} G_i)$-generic and that $G_{j_1}$ is
$\Sigma^1_1(G_{j_0} \oplus \bigoplus_{i \in F} G_i)$-generic. By
Proposition \ref{prop:9} relativized to $\bigoplus_{i \in F} G_i$, it
follows that $\Delta^1_1(G_{j_0} \oplus \bigoplus_{i \in F} G_i) \cap
\Delta^1_1(G_{j_1} \oplus \bigoplus_{i \in F} G_i) =
\Delta^1_1(\bigoplus_{i \in F} G_i)$.

We now apply Proposition \ref{prop:11} relativized to $\bigoplus_{i
  \in F} G_i$. Fix $j \not \in F$ and $k \in \omega$. Since
$\bigoplus_{i \not \in F} G_i$ is $\Sigma^1_1(\bigoplus_{i \in F}
G_i)$-generic, $G_j$ is $\Sigma^1_1(\bigoplus_{i \in F} G_i)$-generic
and there is a $\Delta^1_1(G)$ evaluation map for $\phi_k$, it follows
that $\phi_k$ is completely determined in $\Delta^1_1(G_j \oplus \bigoplus_{i \in
  F} G_i)$. Because this holds for any $j \not \in F$, $\phi_k$ is
completely determined in $\Delta^1_1(\bigoplus_{i \in F} G_i)$ by the third fact
above. Since $\mathsf{L}_{\omega_1, \omega}$-$\mathsf{CA}$ is a theory
of hyperarithmetic analysis, the conclusion follows.
\end{proof}

\begin{proposition}
$\mathcal M \not\models \ATR$.
\end{proposition}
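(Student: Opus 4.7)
The plan is to produce a linear order in $\mathcal M$ which $\mathcal M$ takes to be a well-order but for which $\mathcal M$ cannot contain a jump hierarchy. Fix a pseudo-ordinal $a \in \kO^* \setminus \kO$; externally, $W_{p(a)}$ has order type $\omega_1^{ck} + \omega_1^{ck}\cdot\mathbb Q + \beta$ and admits no hyperarithmetic descending sequence. By iterating Proposition~\ref{prop:10}, each $Z_n := \bigoplus_{i<n} G_i$ is $\Sigma^1_1$-generic, so $\omega_1^{Z_n} = \omega_1^{ck}$. I will first show that the $\Sigma^1_1$ set $A := \{Z : \exists X \leq_h Z \text{ with } X \text{ a descending sequence in } W_{p(a)}\}$ is meager: if for some $e, b$ the event ``$\Phi_e(H_b^Z)$ is a descending sequence'' were non-meager, a forcing argument would force its values to be computable from $\emptyset^{(b)}$, producing a hyperarithmetic descending sequence, contradiction. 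Since $Z_n \in A$ would violate $\Sigma^1_1$-genericity, $\mathcal M$ contains no descending sequence in $W_{p(a)}$, and so $\mathcal M$ believes $W_{p(a)}$ is a well-order.

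Assume for contradiction $\mathcal M \models \ATR$, so $\mathcal M$ contains a real $H$ that is (in $\mathcal M$, and hence externally, since the condition is arithmetic) a jump hierarchy on $W_{p(a)}$ starting at $\emptyset$. By transfinite induction on rank along the genuinely well-founded initial segment of $W_{p(a)}$ (which is isomorphic to $\omega_1^{ck}$), the fibers $H^b$ for $b$ in that segment equal the true jump hierarchy values, so $H^b \equiv_T \emptyset^{(\mathrm{rank}(b))}$. Since the well-founded part exhausts the ordinals below $\omega_1^{ck}$, we conclude $H \geq_T \emptyset^{(\gamma)}$ for every $\gamma < \omega_1^{ck}$.

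To finish, $H \in \mathcal M$ gives some $n$ with $H \in \Delta^1_1(Z_n)$, so $H \leq_T Z_n^{(\beta_0)}$ for some $\beta_0 < \omega_1^{ck}$; combined with the previous step, $Z_n^{(\beta_0)} \geq_T \emptyset^{(\beta_0+1)}$. But $Z_n$ is $\Sigma^1_1$-generic, and a standard forcing argument rules this out: any reduction computing $\emptyset^{(\beta_0+1)}$ from $Z_n \oplus \emptyset^{(\beta_0)}$ would be forced on a nonempty open set to produce a value depending only on the $\emptyset^{(\beta_0)}$-oracle, giving $\emptyset^{(\beta_0+1)} \leq_T \emptyset^{(\beta_0)}$. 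The main technical obstacle is executing these two forcing-style meagerness claims cleanly, and verifying that they apply uniformly across all $b \in \kO$ indexing the hyperarithmetic closure of $Z_n$.
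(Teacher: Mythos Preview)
Your approach is sound and the two ``forcing-style meagerness claims'' can indeed be made rigorous, but a couple of details need tightening. In step 2, the claim that forcing makes the descending sequence ``computable from $\emptyset^{(b)}$'' is not quite right: different generics $Z$ give different sequences $\Phi_e(H_b^Z)$, so no single sequence is forced. The clean fix is to observe that the predicate ``$\Phi_e(H_b^Z)$ is a descending sequence in $W_{p(a)}$'' is $\Sigma^0_{b+O(1)}$, so if it holds of the generic it holds of any hyperarithmetic $(b+O(1))$-generic $Z$, yielding a hyperarithmetic descending sequence---the contradiction you want. The appeal to ``violating $\Sigma^1_1$-genericity'' is also loose, since $A$ is a $\Sigma^1_1$ set of reals rather than a $\Sigma^1_1$-coded open set; the argument really goes through $\Delta^1_1$-genericity applied to each hyperarithmetic piece $A_{e,b}$, together with $\omega_1^{Z_n}=\omega_1^{ck}$ to ensure these pieces exhaust $A$.

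For the second half, the paper takes a shorter route: it simply repeats the same transfer argument, observing that ``$H_b^X$ computes a jump hierarchy on $a^\ast$'' is itself $\Sigma^0_{b+O(1)}$ (being a jump hierarchy is $\Pi^0_2$), so if it holds of $G$ it holds of a hyperarithmetic $(b+O(1))$-generic, giving a hyperarithmetic jump hierarchy on $a^\ast$---impossible. Your route through steps 3--4 (the jump hierarchy computes every $\emptyset^{(\gamma)}$, then $Z_n^{(\beta_0)}\equiv_T Z_n\oplus\emptyset^{(\beta_0)}$ plus a cone-avoidance argument) also works and makes the degree-theoretic obstruction explicit, but it requires the extra fact about iterated jumps of generics and breaks the symmetry with the first half. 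The paper's version buys uniformity and brevity; yours trades that for a more concrete picture of why the jump hierarchy cannot live in $\mathcal M$.
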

\begin{proof}
Let $a^\ast \in \kO^\ast$.  Then $\mathcal M$ believes that 
$a^\ast$ is an ordinal.  For if there were a 
$\Delta^1_1(G)$-computable descending sequence in $a^\ast$,
then for some
$b \in \kO$ (here we use the fact that $\omega_1^{ck} = \omega_1^G$)
the statement $R(X) :$ ``$H_b^X$ computes a descending 
sequence in $a^\ast$'' is a hyperarithmetic predicate
which holds of $G$.  As $R$ 
holds of $p\concat G$ for any $p \in 2^\omega$,
the set of $X$ for which $R$ holds is comeager
(since each $p\concat G$ is $\Sigma^1_1$-generic, there 
can be no $p$ which forces $\neg R(X)$, therefore 
the set of $p$ which force $R(X)$ is dense).
Furthermore, $R(X)$ is $\Sigma^0_{b + O(1)}$ , so $R(X)$ 
holds for any $X$ which is $b+O(1)$-generic.  There is a 
hyperarithmetic such $X$.  But then $H_b^X$ is also 
hyperarithmetic, contradicting that $a^\ast$ has no 
hyperarithmetic descending sequence.  So $a^\ast$ is well-founded, 
according to $\mathcal M$.

For contradiction, suppose there were a jump hierarchy on 
$a^\ast$ in $\Delta^1_1(G)$.  Then for some $b \in \kO$, 
$R(X) :=$ ``$H_b^X$ computes a jump hierarchy on $a^\ast$'' 
is again a $\Sigma^0_{b+O(1)}$ predicate,
where $R$ holds of $G$.  (Recall that
being a jump hierarchy on $a^\ast$ is just a $\Pi^0_2$ property).
Arguing as above, hyperarithmetically in 
any $b+O(1)$-generic $X$, we would have a jump 
hierarchy on $a^\ast$, which is impossible since $a^\ast$ has 
no hyperarithmetic jump hierarchy.
\end{proof}

Below, the way that $\mathcal M$ can produce a Baire decomposition 
without resorting to arithmetic transfinite recursion is by 
polling a sufficiently generic element $G_i$ about whether 
$p\concat G_i \in \makeset{T}$ while varying $p \in 2^{<\omega}$ 
to get a complete picture of the comeager behavior of $T$.

 \begin{theorem}\label{thm:1}There is an $\omega$-model of $\DPB$ that does
  not satisfy $\ATR$.
  \end{theorem}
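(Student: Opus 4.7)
The plan is to establish $\mathcal{M} \models \DPB$; combined with the already-verified facts that $\mathcal{M}$ models $\LwCA$ but not $\ATR$, this will complete the theorem. Fix a completely determined Borel code $T$ in $\mathcal{M}$, so $T \in \Delta^1_1(Y)$ for $Y = \bigoplus_{i < N} G_i$, and pick any $j \geq N$; by Proposition \ref{prop:10}, $G_j$ is $\Sigma^1_1(Y)$-generic. For each $p \in 2^{<\omega}$, the completely determined hypothesis yields a unique evaluation map for $p \concat G_j$ in $T$, and uniformity in $p$ places the function $e_p(\sigma)$ defined as the value of that evaluation map at $\sigma$ into $\Delta^1_1(Y \oplus G_j) \subseteq \mathcal{M}$. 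My strategy is to convert the polling data $e$ into a Baire decomposition of $T$ inside $\mathcal{M}$ and then invoke Proposition \ref{prop:decomposition_implies_approximation} to produce the desired Baire approximation.

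I will call $p$ \emph{$\sigma$-stable} when $e_{p \concat r}(\sigma) = e_p(\sigma)$ for all $r$; this is $\Pi^0_1$ in $e$, so in $\mathcal{M}$, and it is upward closed in the prefix order. I then set $U_\sigma := \bigcup\{[p] : p \text{ is $\sigma$-stable with } e_p(\sigma) = 1\}$ and $V_\sigma$ symmetrically with value $0$. Disjointness $U_\sigma \cap V_\sigma = \emptyset$ is immediate: if cylinders $[p] \subseteq U_\sigma$ and $[q] \subseteq V_\sigma$ met, comparability together with stability at the shorter string would force contradictory values. For a leaf $\sigma$ labelled by clopen $C$ one sees that $p$ is $\sigma$-stable with value $1$ iff $[p] \subseteq C$, so $U_\sigma = C$ and $V_\sigma = C^c$, verifying the leaf clause. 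The remaining clauses of a Baire decomposition reduce to two facts about $e$: (a) $\sigma$-stable strings are dense in $2^{<\omega}$, and (b) for $\sigma$ a union node every cylinder in $U_\sigma$ contains a sub-cylinder of some $U_{\sigma \concat n}$, with the dual statement for intersection nodes.

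The main obstacle will be proving (a) and (b), which I plan to do by a full-universe argument using the $\Sigma^1_1$ Baire property. Define $\tilde A_\sigma := \{X \in 2^\omega : \text{some evaluation map for $X$ in $T$ assigns $1$ to $\sigma$}\}$, a uniformly $\Sigma^1_1(Y)$ set. By the classical regularity of analytic sets, $\tilde A_\sigma$ has the Baire property, so there is an open $O_\sigma$ and a meager $M_\sigma$ with $\tilde A_\sigma \triangle O_\sigma \subseteq M_\sigma$ and $M_\sigma$ coverable by $\Pi^1_1(Y)$ nowhere-dense pieces. Since each $p \concat G_j$ inherits $\Sigma^1_1(Y)$-genericity, it meets the complements of those pieces and therefore avoids $M_\sigma$, yielding the key equivalence $e_p(\sigma) = 1 \iff p \concat G_j \in O_\sigma$. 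Applying the Baire property of $\tilde A_\sigma$ inside any cylinder $[p]$ produces a sub-cylinder on which $O_\sigma$ is either everywhere present or everywhere absent, hence a $\sigma$-stable extension of $p$, establishing (a). For (b), $[p] \subseteq U_\sigma$ forces $[p]$ to be comeagerly inside $\tilde A_\sigma = \bigcup_n \tilde A_{\sigma \concat n}$, so by Baire category some $\tilde A_{\sigma \concat n}$ is nonmeager in a sub-cylinder of $[p]$, producing a $(\sigma \concat n)$-stable cylinder there; the intersection case is symmetric. Once the $(U_\sigma, V_\sigma)$'s are verified as a Baire decomposition, Proposition \ref{prop:decomposition_implies_approximation} assembles them into a Baire approximation whose pieces remain in $\Delta^1_1(Y \oplus G_j) \subseteq \mathcal{M}$, finishing the proof.
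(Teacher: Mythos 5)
Your setup (polling a single $\Sigma^1_1(Y)$-generic column $G_j$, defining $U_\sigma$ as the region where the evaluation map is stably $1$, and feeding the result into Proposition \ref{prop:decomposition_implies_approximation}) matches the paper's architecture, but the way you certify the decomposition axioms has a genuine gap. Everything in your argument for (a) and (b) funnels through the equivalence $e_p(\sigma)=1 \iff p\concat G_j \in O_\sigma$, where $O_\sigma$ is the open approximation to $\tilde A_\sigma = \{X : \text{some evaluation map for $X$ in $T$ assigns $1$ to $\sigma$}\}$. The forward direction is fine, but the backward direction requires that whenever \emph{some} real-world evaluation map for $p\concat G_j$ assigns $1$ to $\sigma$, so does the map $e_p$ living in $\mathcal M$. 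In the only interesting case $T$ is ill-founded (a truly well-founded code has a hyperarithmetic Baire code, so $\LwCA$ already suffices), and for ill-founded codes evaluation maps are \emph{not} unique in the real world: along an infinite path the values are underdetermined, so $\tilde A_\sigma$ and its ``assigns $0$'' counterpart can overlap on a non-meager set containing points $p\concat G_j$. Uniqueness is only a theorem \emph{inside} the model, proved by a transfinite induction the model wrongly believes is along a well-order. Consequently $p\concat G_j \in O_\sigma$ does not pin down $e_p(\sigma)$, and your density and refinement claims (a), (b) are unsupported. A secondary soft spot: the assertion that a $\Sigma^1_1(Y)$-generic decides membership in an arbitrary (non-open) $\Sigma^1_1(Y)$ set, with the meager error covered by definable nowhere dense sets the generic avoids, is true but needs the decomposition $\tilde A_\sigma=\bigcup_{\xi<\omega_1^{ck}}B_\xi$ on $\omega_1^{ck}$-preserving reals; as written it is asserted, not proved.

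The paper sidesteps the non-uniqueness problem by never appealing to real-world regularity of $\tilde A_\sigma$ at all. It uses \emph{two} columns $G_i, G_j$ and runs an arithmetic transfinite induction along $T$ inside $\mathcal M$: at each node it shows $U^i_\sigma = U^j_\sigma$, whence by Proposition \ref{prop:9} the partial decomposition below $\sigma$ is $\Delta^1_1$; this is exactly what guarantees that $p\concat G_i$ meets the dense sets $D_{m,\tau}$ of Proposition \ref{prop:decomposition_implies_approximation} for the already-built lower levels, which in turn converts ``$p\concat G_i \in U_\tau$'' into ``$p\concat G_i \in \makeset{T_\tau}$'' and drives the inductive clauses (2)--(4). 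If you want to salvage your single-generic, non-inductive route, you would need to replace $\tilde A_\sigma$ by a set that provably agrees with the model's evaluation maps on the generics --- and the natural way to do that is precisely the paper's two-generic induction. (One further small point: the claim that $(p,\sigma)\mapsto e_p(\sigma)$ lands uniformly in $\Delta^1_1(Y\oplus G_j)$ is not automatic from complete determinacy alone; it is the content of Lemma \ref{lem:LwCA}, which you should invoke explicitly.)
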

\begin{proof}
Let $\mathcal M$ be as above.
Let $T \in M$ be a labeled Borel code which is completely determined 
in $M$.  We consider the case where $T \in \Delta^1_1$;
the case where $T \in \Delta^1_1(\bigoplus_{i<n} G_i)$ 
follows by relativization. Since $T$ is completely determined, for each $G_i$ and 
each $p \in 2^{<\omega}$, the statements
$p\concat G_i \in \makeset{T_\sigma}$
can be understood as a completely determined formulas of $L_{\omega_1,\omega}$
(by replacing the leaves of $T_\sigma$ with 0 or 1 
according to whether $p\concat G_i$ is in those 
sets). These formulas are uniformly $\Delta^1_1(G_i)$.  Therefore,
by Lemma \ref{lem:LwCA}, we have
$$\{(\sigma,p): p\concat G_i \in \makeset{T_\sigma}\} 
\in \Delta^1_1(G_i)$$
Therefore, for each $i$, $\Delta^1_1(G_i)$ contains the sequence $(U_\sigma^i,V_\sigma^i)_{\sigma\in T}$ defined by
$$U_\sigma^i = \{ p : \forall q \succeq p, q\concat G_i \in \makeset{T_\sigma}\},
\qquad V_\sigma^i = \{p : \forall q \succeq p, q \not\in U_\sigma^i\}.$$

We claim that for each $i\neq j$ and for each
$\sigma \in T$, the collections
$$(U_{\sigma\concat\tau}^i,V_{\sigma\concat\tau}^i)_{\tau\in T_\sigma}, \qquad 
(U_{\sigma\concat\tau}^j,V_{\sigma\concat\tau}^j)_{\tau\in T_\sigma}$$
are Baire decompositions for $T_\sigma$, and are equal.  The proof 
(for fixed $i,j$) is
carried out inside of $\mathcal M$ 
by arithmetic transfinite induction on the rank of $\sigma$ in $T$.
Specifically, we claim that

\begin{enumerate}
\item[(1)] If $\sigma$ is a leaf, then $U^i_{\sigma} =$ the clopen set coded by $\sigma$ and $V_{\sigma} = ({U}^i_{\sigma})^c$.
\item[(2)] If $\sigma$ is a union node, then for all $p \in 2^{< \omega}$, $p \in U^i_{\sigma}$ if and only if $\bigcup_n U^i_{\sigma^{\smallfrown}n}$ is dense in $[p]$.
\item[(3)] If $\sigma$ is an intersection node, then for all $p \in 2^{< \omega}$, $p \in V^i_{\sigma}$ if and only if $\bigcup_n V^i_{\sigma^{\smallfrown}n}$ is dense in $[p]$.
\item[(4)] $U^j_{\sigma} = U^i_{\sigma}$ (and thus $V_\sigma^j = V_\sigma^i$).
\end{enumerate}

Note that the definition of the $V_\sigma^i$ in terms of $U_\sigma^i$ guarantees 
that $U_\sigma^i \cup V_\sigma^i$ is dense and 
$U_\sigma^i \cap V_\sigma^i = \emptyset$, and the remaining parts of the 
claim suffice to establish that we have a Baire decomposition.

When $\sigma$ is a leaf, it is clear that $U_\sigma^i$ and $U_\sigma^j$ 
consist of precisely those $p$ such that $[p]$ is contained in 
the clopen set coded by $\ell(\sigma)$.

Now fix an interior node $\sigma$. By induction, we can assume that for all $\tau \in T$ properly extending $\sigma$, condition (4) holds, so we drop 
the superscripts and denote these open sets by $U_{\tau}$ and $V_\tau$. Since 
Properties (1)-(3) hold for $\rho$ extending such $\tau$, we have that 
$(U_{\rho},V_{\rho})_{\rho\in T_\tau}$ are a Baire decomposition for $T_{\tau}$. 
We let 
$D_{m,\tau}$ denote the canonical sequence of dense open sets 
from Proposition \ref{prop:decomposition_implies_approximation}
corresponding to this Baire decomposition. 
Since $(D_{m,\tau})_m \in \Delta^1_1(G_i) 
\cap \Delta^1_1(G_j)$, so by 
Proposition \ref{prop:9},
$(D_{m,\tau})_m \in \Delta^1_1$.  Therefore, for all 
$p \in 2^{<\omega}$, we have $p\concat G_i, p\concat G_j \in \cap_m D_{m,\tau}$.
Therefore, if $p\concat G_i \in U_\tau$, then 
$p\concat G_i \in \makeset{T_\tau}$, and if 
$p\concat G_i \in V_\tau$, then 
$p\concat G_i \not\in \makeset{T_\tau}$, and the same holds for $G_j$.

Suppose that $\sigma$ is a union node.
To prove ($\Rightarrow$) in (2), fix $q \in U^i_{\sigma}$. 
We need to show that 
$\{ r \in 2^{< \omega} : q^{\smallfrown}r \in 
\bigcup_n U_{\sigma^{\smallfrown}n} \}$ is dense. 
For a contradiction, suppose $[q^{\smallfrown}r_0] \cap \bigcup_n U_{\sigma^{\smallfrown}n} = \emptyset$ for some fixed $r_0$. To obtain a contradiction, we will show 
that for all $n$, we have 
$q^{\smallfrown}r_0^{\smallfrown}G_i \not \in |T_{\sigma^{\smallfrown}n}|$.
 Since $\sigma$ is a union node, it follows that 
$q^{\smallfrown}r_0^{\smallfrown}G_i \not \in \makeset{T_{\sigma}}$ 
contradicting the fact that $q \in U^i_{\sigma}$.

Fix $n$ and let $\tau = \sigma^{\smallfrown}n$. Since $\tau$ properly
extends $\sigma$, we have that $q^{\smallfrown}r_0^{\smallfrown}G_i
\in \bigcap_m D_{m,\tau}$ by the comments two paragraphs above. 
Since $U_{\tau} \cup V_{\tau}$ is dense but 
$U_\tau \cap [q\concat r_0]=\emptyset$, it follows that
$V_{\tau}$ is dense in $[q^{\smallfrown}r_0]$ and therefore
$q^{\smallfrown}r_0^{\smallfrown}G_i \in V_{\tau}$.  From
$q^{\smallfrown}r_0^{\smallfrown}G_i \in \bigcap_m D_{m,\tau}$ and
$q^{\smallfrown}r_0^{\smallfrown}G_i \in V_{\tau}$, it follows that
$q^{\smallfrown}r_0^{\smallfrown}G_i \not \in |T_{\tau}|$ as required to
complete the contradiction.

To prove ($\Leftarrow$) in (2), assume that $\bigcup_n
U_{\sigma^{\smallfrown}n}$ is dense in $[q]$. We need to show that $q
\in U^i_{\sigma}$.  Fix $r_0 \in 2^{<\omega}$.  Since $\bigcup_n
U_{\sigma^{\smallfrown}n}$ is dense in $[q]$, it is also dense in
$[q^{\smallfrown}r_0]$. By the induction hypothesis and Proposition
\ref{prop:9}, $\bigcup_n U_{\sigma^{\smallfrown}n}$ is
$\Delta^1_1$. Let $A = \{ \tau : \exists n \, (
q^{\smallfrown}r_0^{\smallfrown}\tau \in U_{\sigma^{\smallfrown}n} )
\}$. $A$ is dense and is $\Delta^1_1$. Therefore, $G_i$ meets the set
$A$. Fix $\tau \in A$ such that $\tau \prec G_i$ and fix $n$ such
that $q^{\smallfrown}r_0^{\smallfrown}\tau \in
U_{\sigma^{\smallfrown}n}$.  Then $q^{\smallfrown}r_0^{\smallfrown}G_i
\in U_{\sigma^{\smallfrown}n}$.  So, as noted above,
$q^{\smallfrown}r_0^{\smallfrown}G_i \in \bigcap_m
D_{m,\sigma^{\smallfrown}n}$ and so
$q^{\smallfrown}r_0^{\smallfrown}G_i \in
|T_{\sigma^{\smallfrown}n}|$. As $r_0$ was arbitrary, this shows that $q
\in U_\sigma^i$.

The exact same argument shows that (2) is also satisfied when $i$ is 
replaced by $j$.  Therefore, $U_\sigma^i$ and $U_\sigma^j$ are 
described by exactly the same condition, so they are equal.

Finally, let $\sigma$ be an intersection node. 
First, consider the direction ($\Leftarrow$) of (3): 
Suppose that $q \not \in V^i_{\sigma}$ and fix $r_0$ such that 
$q^{\smallfrown}r_0 \in U^i_{\sigma}$. 
We will show that $q^{\smallfrown}r_0 \in U_{\sigma^{\smallfrown}n}$ for all $n$,
so $\bigcup_n V_{\sigma^{\smallfrown}n}$ is not dense in $[q]$ 
(it is disjoint from $[q\concat r_0]$). 

Fixing $n$, consider an arbitrary string $p$ extending $q^{\smallfrown}r_0$. 
Since $q^{\smallfrown}r_0 \in U^i_{\sigma}$, we know that $p^{\smallfrown}G_i \in \makeset{T_{\sigma}}$. Since $\sigma$ is an intersection node, it follows that 
$p^{\smallfrown}G_i \in \makeset{T_{\sigma^{\smallfrown}n}}$. Since $p$ was an arbitrary string extending $q^{\smallfrown}r_0$, this implies 
$q^{\smallfrown}r_0 \in U_{\sigma^{\smallfrown}n}$ as required to complete this direction of (3). 

To prove ($\Rightarrow$) in (3), assume $\bigcup_n V_{\sigma^{\smallfrown}n}$ 
is not dense in $[q]$. We need to show that $q \not \in V^i_{\sigma}$. 
Fix $r_0$ such that 
$[q^{\smallfrown}r_0] \cap \bigcup_n V_{\sigma^{\smallfrown}n} = \emptyset$. 
Therefore, for each $n$, $U_{\sigma^{\smallfrown}n}$ is dense in $[q^{\smallfrown}r_0]$. 

Fix an arbitrary string $p$ extending $q^{\smallfrown}r_0$. We claim that 
for all $n$, we have 
$p^{\smallfrown}G_i \in U_{\sigma^{\smallfrown}n}$. First, note that 
$U_{\sigma^{\smallfrown}n}$ is dense in $[p]$ and that by the 
induction hypothesis and Proposition \ref{prop:9}, 
$U_{\sigma^{\smallfrown}n}$ is $\Delta^1_1$. 
We shift $U_{\sigma^{\smallfrown}n}$ to a set 
$A = \{ \tau : p^{\smallfrown}\tau \in U_{\sigma^{\smallfrown}n} \}$ 
which is dense and $\Delta^1_1$, so $G_i$ meets $A$.
Let $\tau \in A$ be such that $\tau \prec G_i$. 
Then, $p^{\smallfrown}\tau \in 
U_{\sigma^{\smallfrown}n}$ and so $p^{\smallfrown}G_i \in U_{\sigma^{\smallfrown}n}$. Furthermore, as noted above, since $p^{\smallfrown}G_i \in 
\bigcap_m D_{m, \sigma^{\smallfrown}n}$, it follows that 
$p^{\smallfrown}G_i \in \makeset{T_{\sigma^{\smallfrown}n}}$. 
Since this property holds for each $n$ and since 
$\sigma$ is an intersection node, it follows that 
$p^{\smallfrown}G_i \in \makeset{T_{\sigma}}$. 
The string $p$ extending $q^{\smallfrown}r_0$ was arbitrary, 
so by the definition of 
$U^i_{\sigma}$, we have $q^{\smallfrown}r_0 \in U^i_{\sigma}$, 
and therefore $q \not \in V^i_{\sigma}$ to complete the proof of (3). 

We have actually proved a little more.  Inspecting the argument 
for $(\Rightarrow)$ in (3), we see that whenever 
$[q] \cap \bigcup_n V_{\sigma\concat n} = \emptyset$, we have $q \in U_\sigma^i$;
and inspecting the argument for $(\Leftarrow)$ in (3), we see 
that whenever $q \in U_\sigma^i$, we have 
$[q]\cap \bigcup_n V_{\sigma\concat n} = \emptyset$.  This gives a 
definition of $U_\sigma^i$ that does not depend on $i$, 
and indeed the arguments above could be repeated exactly for $U_\sigma^j$.
Therefore, $U_\sigma^i = U_\sigma^j$ in the case where $\sigma$ 
is an intersection as well.

We conclude that $(U_\sigma, V_\sigma)_{\sigma \in T}$ is a Baire 
decomposition for $T$, and so $T$ has a Baire approximation in $M$.
Therefore $\mathcal M$ satisfies $\DPB$ but not $\ATR$.
\end{proof}

\section{Decorating trees}

In order to show that $\DPB$ is strictly stronger than
$\LwCA$, we need to make some techniques for 
building non-standard 
Borel codes in a way that ensures they are completely determined.

A non-standard Borel code is a code that is not actually well-founded, 
but which the model thinks is well-founded.  These fake codes 
are essential for the strength of $\DPB$.  If a Borel code is 
truly well-founded, then it has a Baire code which is hyperarithmetic 
in itself.  Since any $\omega$-model of $\LwCA$ is 
closed under hyperarithmetic reduction, 
$\LwCA$ alone would be enough to guarantee the 
Baire code exists in the case when the Borel code is truly 
well-founded (at least in $\omega$-models). 
So now we are going to describe how to construct a
non-standard Borel code which makes every effort to be completely determined.

If we make a Borel code $T$ which is not well-founded, 
the most likely scenario is that it is also not completely determined. 
This is because, in general, it might take a jump hierarchy 
the height of the rank of $T$ in order to produce an 
evaluation map.  So in this section, we show how to 
add ``decorations'' to the tree, which shortcut the logic of 
the tree to make sure that for 
a small set of $X$, there is an evaluation map for $X$ in 
the decorated tree.  In Section \ref{sec:dpb_no_hyp}, 
``small'' is countable, and in Section \ref{sec:dpb_implies_generics}, 
``small'' is meager.  This comes at the cost of 
trashing any information about whether $X$ was in the
original set,
but if that set had a Baire approximation, 
then its decorated version should have the same Baire 
approximation, since the set of $X$ whose membership 
facts were overwritten is small.  We use this to show that
if the model satisfies 
$\DPB$, then 
the ``small'' set cannot be the entire second-order 
part of the model.

Suppose that we have a partial computable function 
$h$ which maps a number $b \in \kO^\ast$ to a pair
of $b$-ranked labeled trees $(P_b,N_b)$. 
We 
do not mind if $h$ happens to also make some 
outputs
for $b \not\in \kO^\ast$.

The intention is that when $b \in \kO$, any 
$X \in \makeset{P_b} \cup \makeset{N_b}$ will have 
an approximately 
$H_b^X$-computable evaluation map in the decorated tree, 
and $X$ will be in the decorated tree if $X \in \makeset{P_b}$ 
and out of the decorated tree if $X \in \makeset{N_b}$. 
(In practice we will always have 
$\makeset{P_b} \cap \makeset{N_b} = \emptyset$.)

The operation $\Decorate$ is defined below 
using effective transfinite recursion (with parameter 
$<_\ast$; see comment in the next paragraph), 
and therefore is well-defined 
on $a$-ranked trees $T$ for all $a \in \kO^{\ast,T}$.
This is because the effective transfinite recursion can be
carried out in $HYP(T)$ with the same result.

Note that as it is defined here, $\Decorate$ is not quite a
computable operation. That is because the relation $<_\ast$ 
is only c.e., not computable.  To make $\Decorate$ computable,
one should replace $\langle 2b+1\rangle$ below with 
$\langle 2\langle b,s\rangle +1\rangle$, where $s$ is the stage 
at which we learn that $b <_\ast \rho_T(\lambda)$.  This 
has no effect on the logic of the tree, but does result in 
excessive notational clutter.  The reader who prefers a computable
operation could replace $\langle 2b+1\rangle$ everywhere 
with the more complicated expression above.  For our purposes, 
it is perfectly fine that $\Decorate$ is computable relative to 
the parameter $<_\ast$ (a linear order which is itself 
$\emptyset'$-computable).  In any case, all results 
of this section do relativize and 
they will later be used in a relativized form.

\begin{definition} The operation $\Decorate$ is defined as 
follows.  The
inputs are an $a$-ranked labeled tree $T$
and a partial computable 
function $h$ as above.  
\begin{align*}
\Decorate(T, h) = \{\lambda\} &\cup 
\bigcup_{\langle n \rangle \in T} 
\langle 2n \rangle\concat \Decorate(T_{\langle n \rangle}, h)\\
&\cup \bigcup_{b <_\ast \rho_T(\lambda)}
\langle 2b+1\rangle\concat\Decorate(Q_b,h)
\end{align*}
where $Q_b = P_b$ if $\lambda$ is a $\cup$ in $T$, and 
$Q_b = N_b^c$ if $\lambda$ is a $\cap$ in $T$.

The rank and label of $\lambda$ in $\Decorate(T,h)$ 
are defined to coincide with the rank and label of 
$\lambda$ in $T$.  The ranks and labels of the other nodes in 
$\Decorate(T,h)$ are inherited from $\Decorate(T_{\langle n\rangle},h)$
or $\Decorate(Q_b,h)$ as appropriate.
\end{definition}

Since $P_b$ and $N_b$ are $b$-ranked, 
$\Decorate(T,h)$ satisfies the local requirements 
on a ranking.  So if $T$ is $a$-ranked, so is 
$\Decorate(T,h)$.

Similarly, if $T$ and each $P_b$ and $N_b$ are 
alternating, and  each $P_b$ and $N_b$ have
an intersection or leaf at their root, 
then $\Decorate(T,h)$ will also be 
alternating.  (Note that in this case, $N_b^c$ 
has a union at its root).

The following is the essential feature of a decorated tree.

\begin{proposition}\label{prop:essential_decoration}
If $\sigma \in \Decorate(T,h)$ has rank $b$, then 
for all $d <_\ast b$, 
$$\Decorate(T,h)_{\sigma \concat \langle 2d+1\rangle} = 
\Decorate(Q_d,h),$$
where $Q_d = P_d$ or $N_d^c$ as appropriate.
\end{proposition}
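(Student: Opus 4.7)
The plan is to establish a strengthened inductive claim: for every $\sigma \in \Decorate(T,h)$, the subtree $\Decorate(T,h)_\sigma$ is itself of the form $\Decorate(S,h)$ for some labeled tree $S$, where the rank and label of $\lambda$ in $S$ coincide with the rank and label of $\sigma$ in $\Decorate(T,h)$. Once this is proved, the proposition is immediate from inspecting the definition of $\Decorate(S,h)$ at the root: the second clause in that definition attaches $\langle 2d+1\rangle \concat \Decorate(Q_d, h)$ for every $d <_\ast \rho_S(\lambda)$, and the choice between $Q_d = P_d$ and $Q_d = N_d^c$ is governed by whether $\lambda$ is labeled $\cup$ or $\cap$ in $S$, which is precisely the label of $\sigma$ in $\Decorate(T,h)$.

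I would prove the strengthened claim by induction on $|\sigma|$. The base case $\sigma = \lambda$ is just the choice $S = T$, and both the rank and label conditions are built into the definition. For the inductive step, write $\sigma = \tau \concat \langle k \rangle$ and apply the inductive hypothesis at $\tau$ to get $\Decorate(T,h)_\tau = \Decorate(S',h)$ with matching rank and label at the root. Now unfold the definition of $\Decorate(S',h)$ at $\lambda$: the immediate successors at $\lambda$ are either $\langle 2n\rangle$ for some $\langle n\rangle \in S'$, in which case we set $S = S'_{\langle n\rangle}$ and obtain $\Decorate(T,h)_\sigma = \Decorate(S',h)_{\langle 2n\rangle} = \Decorate(S,h)$; or they are $\langle 2c+1\rangle$ for some $c <_\ast \rho_{S'}(\lambda)$, in which case we set $S = Q_c$ and get $\Decorate(T,h)_\sigma = \Decorate(Q_c,h) = \Decorate(S,h)$. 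In both cases, the rank and label of $\sigma$ in $\Decorate(T,h)$ were defined so as to be inherited from $\lambda$ in $S$, so the strengthened claim propagates.

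There is no real obstacle here beyond careful bookkeeping. The content of the proposition is entirely definitional: at every stage in the recursive construction, the decoration clause "for each $d <_\ast$ (root rank), attach $\langle 2d+1\rangle \concat \Decorate(Q_d, h)$" is re-applied, so any subtree rooted at a node of rank $b$ automatically contains, as its $\langle 2d+1\rangle$-successor tree, exactly $\Decorate(Q_d,h)$ for every $d <_\ast b$. The only step where one must be mildly attentive is noting that the induction is well-defined even when $T$ or the $Q_b$'s are ill-founded in truth but well-founded in the model (or more generally, when $\rho_T(\lambda) \in \kO^{\ast,T}$): the same recursion goes through inside $HYP(T)$, which is the context in which $\Decorate$ was defined to be meaningful.
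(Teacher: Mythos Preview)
Your proposal is correct and follows essentially the same approach as the paper: both argue by induction on $|\sigma|$ that $\Decorate(T,h)_\sigma = \Decorate(S,h)$ for some $S$ with matching root rank (and label), then read off the conclusion from the definition of $\Decorate(S,h)$. Your version is simply more explicit about the two cases in the inductive step and about why the label also propagates.
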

\begin{proof} By induction on the length of $\sigma$,
if $\sigma \in \Decorate(T,h)$, then there is some tree 
$S$ such that $\Decorate(T,h)_\sigma = \Decorate(S,h)$.
The rank of $\sigma$ in $\Decorate(T,h)$ 
coincides with the rank of $\lambda$ in $S$,
and this rank is $b$.  
Therefore, by the definition of $\Decorate(S,h)$, we have 
$\Decorate(S,h)_{\langle 2d+1\rangle} = \Decorate(Q_d,h)$.
\end{proof}

\begin{definition}
A \emph{nice decoration generator} is 
a partial computable function which 
maps any $b\in \kO^\ast$ to 
alternating, $b$-ranked 
trees $(P_b,N_b)$, where each 
$P_b$ and $N_b$ have
an intersection or a leaf at their root.  
\end{definition}

\begin{lemma}\label{lem:kO_induction}
Let $h$ be a nice decoration generator. 
Suppose $b \in \kO$, and suppose that 
$X \not\in \makeset{P_d} \cup \makeset{N_d}$
for any $d <_\ast b$.  Then for any $b$-ranked tree $T$,
$X \in \makeset{\Decorate(T,h)}$ if and only if $X \in 
\makeset{T}$.
\end{lemma}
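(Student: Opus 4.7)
The plan is effective transfinite induction on $b \in \kO$. Since $b \in \kO$, the ranking function on $T$ takes all its values in $\kO$, so $T$ is genuinely well-founded, as are each $P_d$ and $N_d$ for $d <_\ast b$, and hence $\Decorate(T,h)$ itself. Thus evaluation maps exist and are unique, and the task is to verify that the value at the root matches. The base case $b = 1$ is trivial: $\lambda$ is a leaf, no $d <_\ast 1$ exists in $\kO$, so $\Decorate(T,h) = T$ and the evaluation at $\lambda$ is the same clopen test.

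For the inductive step, assume the claim holds for every $c$-ranked tree with $c <_\ast b$, and suppose the root $\lambda$ of $T$ is labeled $\cup$. By Proposition~\ref{prop:essential_decoration} and the definition of $\Decorate$, the immediate children of the root of $\Decorate(T,h)$ split into two families: the original subtrees $\Decorate(T_{\langle n \rangle}, h)$, indexed by $\langle 2n \rangle$, and the decoration subtrees $\Decorate(P_d, h)$, indexed by $\langle 2d+1 \rangle$ for $d <_\ast b$. Each $T_{\langle n \rangle}$ has rank strictly below $b$ in $<_\ast$, so the induction hypothesis gives $X \in \makeset{\Decorate(T_{\langle n \rangle}, h)} \iff X \in \makeset{T_{\langle n \rangle}}$. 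For the decorations, $P_d$ is $d$-ranked with $d <_\ast b$, and the hypothesis that $X \notin \makeset{P_e} \cup \makeset{N_e}$ for all $e <_\ast b$ subsumes, by transitivity of $<_\ast$, the analogous hypothesis for all $e <_\ast d$; thus the induction hypothesis applies and yields $X \in \makeset{\Decorate(P_d, h)} \iff X \in \makeset{P_d}$, which is false. Taking the union at the root then gives $X \in \makeset{\Decorate(T,h)} \iff X \in \makeset{T}$.

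The $\cap$ case is dual. The decoration children are $\Decorate(N_d^c, h)$, for which induction gives $X \in \makeset{\Decorate(N_d^c, h)} \iff X \in \makeset{N_d^c}$; this is \emph{true}, since $X \notin \makeset{N_d}$ by hypothesis. So every decoration branch evaluates to $1$ and contributes nothing to the intersection, while the original children $\Decorate(T_{\langle n \rangle}, h)$ retain the same truth values as $T_{\langle n \rangle}$ by the induction hypothesis. I do not expect a serious obstacle here; the proof is essentially a bookkeeping induction. The only point requiring care is propagating the hypothesis on $X$ to the decoration subtrees of smaller rank, which is immediate from transitivity of $<_\ast$.
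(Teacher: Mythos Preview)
Your proposal is correct and follows essentially the same approach as the paper: transfinite induction on $b$, with the key observation that the decoration children $\Decorate(P_d,h)$ (resp.\ $\Decorate(N_d^c,h)$) evaluate to $0$ (resp.\ $1$) by the induction hypothesis and the assumption on $X$, so they contribute nothing to the union (resp.\ intersection) at the root. The paper presents this by explicitly constructing the evaluation map $g$ on $\Decorate(T,h)$ from the evaluation map $f$ on $T$ and the maps on the decoration subtrees, then verifying that $g$ satisfies the tree logic; you argue the same content directly at the level of membership $X \in \makeset{\cdot}$, which is legitimate since everything is truly well-founded when $b \in \kO$.
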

\begin{proof}
By induction on $b$.  Since $b\in\kO$, $T$ 
is truly well-founded, so there is a unique evaluation map 
$f$ for $X$ in $T$.  
Further, for each $d<_\ast b$, there are unique evaluation 
maps $g_{P,d},g_{N,d}$ for 
$X$ in $\Decorate(P_d,h)$ and $\Decorate(N_d^c,h)$. 
Consider the function $g:\Decorate(T,h) \rightarrow \{0,1\}$ 
defined by 
$$g(\sigma)
= \begin{cases} 
f(\frac{\sigma}{2}) & \text{ if each component of $\sigma$ is even}\\
g_{Q,d}(\sigma_1) & \text{ if $\sigma = \sigma_0\concat \langle 2d+1\rangle\concat \sigma_1$ 
and each component of $\sigma_0$ is even,}\end{cases}$$
where the division $\sigma/2$ is taken componentwise, 
and where $Q$ is either $P$ or $N$ depending on whether 
$\sigma_0$ is a union or intersection in $\Decorate(T,h)$.

Since 
$g(\lambda) = f(\lambda)$,
it is enough to show 
that $g$ is an evaluation map for $X$ in
$\Decorate(T,h)$.   Clearly $g$ satisfies the 
logic of the tree at leaves and at nodes 
which have an odd component. 
Consider $\sigma \in \Decorate(T,h)$ where $\sigma$ is a $\cup$
and all components of $\sigma$ are even.
By induction, since $P_d$ is a $d$-ranked tree, 
$X \in \makeset{\Decorate(P_d,h)}$ if and only if $X \in 
\makeset{P_d}$.  By hypothesis, $X \not\in \makeset{P_d}$, 
so $g_{P,d}(\lambda) = 0$, so by Proposition \ref{prop:essential_decoration},
$g(\sigma\concat\langle 2d+1\rangle) = 0$. 
Therefore, the nodes of this form can be ignored: we have 
$$\exists m (g(\sigma\concat m) = 1) \iff 
\exists n (g(\sigma \concat \langle 2n \rangle) = 1) \iff f(\sigma/2) = 1$$
so $g(\sigma)$ takes the correct value.
The argument if 
$\sigma$ is a $\cap$ is similar, except that as 
$X \not\in \makeset{N_b}$, we have $X \in \makeset{N_b^c}$, 
and therefore $g_{N,d}(\lambda) = 1$, 
meaning that nodes of the form 
$\sigma\concat\langle 2d+1\rangle$ 
can be safely ignored when taking an intersection.
\end{proof}

\begin{lemma}\label{lem:2} Let $a \in \kO^\ast$ and $b \in \kO$ 
with $b<_\ast a$.
Let $T$ be an alternating, $a$-ranked tree and 
let $h$ be a nice decoration generator.
Suppose $X \in \makeset{P_b} \cup \makeset{N_b}$.  Then
\begin{enumerate}
\item $X$ has a unique evaluation
map in $\Decorate(T,h)$.
\item This evaluation map is $H_{b + O(1)}^{X\oplus T}$-computable.
\item If $b$ is $<_\ast$-minimal such that 
$X \in\makeset{P_b} \cup \makeset{N_b}$, and  
$b <_\ast \rho_T(\langle n \rangle)$ for all 
$\langle n \rangle \in T$, and $g$ is the unique 
evaluation map for $X$ in $\Decorate(T,h)$, then
\begin{enumerate}
\item $X \in \makeset{P_b} \setminus \makeset{N_b} \implies
g(\lambda) = 1$
\item $X \in \makeset{N_b} \setminus \makeset{P_b} \implies
g(\lambda) = 0$.
\end{enumerate}
\end{enumerate}
\end{lemma}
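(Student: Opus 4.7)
The plan is to proceed by effective transfinite recursion on $a = \rho_T(\lambda) \in \kO^\ast$, proving parts (1) and (2) simultaneously. Without loss of generality, assume $b$ is $<_\ast$-minimal with $X \in \makeset{P_b} \cup \makeset{N_b}$; replacing $b$ with this minimum only tightens the computability bound in (2), and (3) already assumes minimality. Under this assumption, Lemma \ref{lem:kO_induction} can be applied to any $c$-ranked tree with $c \leq_\ast b$ to produce the evaluation map for $X$ there, uniquely equal to the undecorated evaluation.

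The recursion splits on whether $a \leq_\ast b$ or $b <_\ast a$. In the first case, $a \in \kO$, so $T$ is truly well-founded and Lemma \ref{lem:kO_induction} applies directly, yielding a unique evaluation map that is $H_{a+O(1)}^{X \oplus T}$-computable, which suffices. In the second case, the IH gives unique evaluation maps on each proper subtree $\Decorate(T_{\langle n \rangle}, h)$ (of rank $<_\ast a$) and on each decoration subtree $\Decorate(Q_d, h)$ for $d <_\ast a$ (using the IH for $d >_\ast b$ and Lemma \ref{lem:kO_induction} for $d \leq_\ast b$). These values are combined at $\lambda$ via the appropriate union or intersection to produce $g(\lambda)$; tracking ranks carefully, the overall computability bound stays at $H_{b+O(1)}^{X \oplus T}$ because at any node the value can be expressed as an AND/OR over $T$-children of small rank (whose values come from Lemma \ref{lem:kO_induction}), adding at most one jump.

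For part (3), under the hypothesis $b <_\ast \rho_T(\langle n \rangle)$ for all $\langle n \rangle \in T$, I analyze $g(\lambda)$ directly. For (3)(a) with $X \in \makeset{P_b} \setminus \makeset{N_b}$: if $\lambda$ is a $\cup$, the shortcut child $\langle 2b+1 \rangle = \Decorate(P_b, h)$ has root value $\chi_{P_b}(X) = 1$ by Lemma \ref{lem:kO_induction}, forcing $g(\lambda) = 1$. If $\lambda$ is a $\cap$, I verify every child evaluates to $1$: each $T$-child is a $\cup$ node of rank $>_\ast b$ whose own shortcut $\Decorate(P_b, h)$ forces its value to $1$; each decoration child $\Decorate(N_d^c, h)$ with $d \leq_\ast b$ evaluates to $\chi_{N_d^c}(X) = 1$ by Lemma \ref{lem:kO_induction}, and each with $d >_\ast b$ has a $\cup$-root whose shortcut $\Decorate(P_b, h)$ (valid since $b <_\ast d$) again forces value $1$. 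Thus $g(\lambda) = 1$. Part (3)(b) is dual.

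The main obstacle I anticipate is establishing uniqueness when $a$ is a pseudo-ordinal, since evaluation maps on ill-founded trees need not in general be unique. I would prove uniqueness by transfinite induction on $\rho(\sigma) \in \kO^\ast$: at nodes with $\rho(\sigma) \leq_\ast b$, the subtree is truly well-founded and Lemma \ref{lem:kO_induction} yields a unique evaluation; at higher-rank nodes, the shortcut child $\sigma \concat \langle 2b+1 \rangle$ has a uniquely determined value via Lemma \ref{lem:kO_induction}, which together with the inductively-unique values of the remaining children pins down $g(\sigma)$. Since uniqueness is an arithmetic property of $g$ and $g'$, induction along $\kO^\ast$ is legitimate via the arithmetic transfinite induction principle valid in $HYP$ and discussed in the preliminaries.
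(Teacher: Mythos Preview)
Your proposal is correct in substance and arrives at the same case analysis as the paper, but the packaging as ``effective transfinite recursion on $a = \rho_T(\lambda) \in \kO^\ast$'' is unnecessary and creates avoidable complications. The paper does not recurse on $a$ at all; instead it fixes the minimal $b$ and argues by direct case analysis on each node $\sigma \in \Decorate(T,h)$ according to whether $\rho(\sigma) \leq_\ast b$ (well-founded subtree, unique evaluation) or $\rho(\sigma) >_\ast b$ (shortcut child $\sigma\concat\langle 2b+1\rangle$ forces the value, using alternation to handle the one intermediate layer). This gives existence, uniqueness, and the $H_{b+O(1)}$ bound in one stroke, with no appeal to induction along a pseudo-well-order.

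Your own argument actually collapses to this direct analysis: when you write that ``at any node the value can be expressed as an AND/OR over $T$-children of small rank \ldots\ adding at most one jump,'' and when you observe for uniqueness that the shortcut at rank $b$ pins down high-rank values, you are using the direct case analysis rather than the inductive hypothesis on ranks between $b$ and $a$. The IH on high-rank subtrees is never genuinely invoked, because a high-rank $\cup$ node gets value $1$ from its shortcut alone, and a high-rank $\cap$ node has only $\cup$-children (pinned by their shortcuts) and low-rank children (pinned by well-foundedness). So the recursion on $a$ is superfluous scaffolding. Two minor corrections: the computability bound does \emph{not} follow from the recursion (each recursive step would add a jump, and along a pseudo-ordinal this is fatal), which is why you need the direct argument there; and your claim that ``uniqueness is an arithmetic property of $g$ and $g'$'' is not quite right as stated, since the statement $\forall g, g'(g=g')$ is $\Pi^1_1$ --- though this is harmless once you fix $g,g'$ as parameters, and in any case the direct case analysis sidesteps the issue. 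Your treatment of part (3) matches the paper's.
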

\begin{proof}
It suffices to show all three parts in the case when $b$ is
$<_*$-minimal such that $X \in |P_b| \cup |N_b|$.

We prove (1) and (2) by showing that for each $\sigma \in
\text{Decorate}(T,h)$, there is only one possible value for
$g(\sigma)$ for any evaluation map $g$ for $X$ in
$\text{Decorate}(T,h)$ and that $H_{b+O(1)}^{X \oplus T}$ suffices to
compute this value. Since these unique values satisfy the internal
logic of the tree (which the reader can verify from the description
below), they constitute an evaluation function for $X$ in
$\text{Decorate}(T,h)$, proving (1) and (2).

To show that there is only one possible value for $g(\sigma)$, we
break into cases depending on the rank and label of $\sigma$ in
$\text{Decorate}(T,h)$ and on whether $X \in |P_b|$ or $X \in
|N_b|$. Note that $H_b^{X \oplus T}$ can uniformly determine the
appropriate case for each $\sigma$.

Case 1. Suppose $\rho(\sigma) \leq_{*} b$. Since $b \in \mathcal{O}$,
$\text{Decorate}(T,h)_{\sigma}$ is truly well-founded. Therefore,
there is a unique evaluation map $f$ for $X$ in
$\text{Decorate}(T,h)_{\sigma}$ and we have $g(\sigma) =
f(\lambda)$. The map $f$ is uniformly $H_{b+O(1)}^{X \oplus T}$-computable.

Case 2. Suppose $b <_{*} \rho(\sigma)$, $\sigma$ is a union node in
$\text{Decorate}(T,h)$ and $X \in |P_b|$. In this case, we claim that
$g(\sigma)=1$.  By Proposition 4.2, all nodes extending
$\sigma^{\smallfrown}\langle 2b+1 \rangle$ have rank $b$ or
less. Therefore, there is a unique evaluation map $f$ on
$\text{Decorate}(T,h)_{\sigma^{\smallfrown}\langle 2b+1 \rangle}$ and
so $g(\sigma^{\smallfrown}\langle 2b+1 \rangle)=f(\lambda)$. By Lemma
4.4, $X \in |P_b|$ implies $f(\lambda) = 1$. Therefore,
$g(\sigma^{\smallfrown}\langle 2b+1 \rangle)=1$ and because $\sigma$
is a union node, $g(\sigma)=1$.

Case 3. Suppose $b <_{*} \rho(\sigma)$, $\sigma$ is an intersection
node in $\text{Decorate}(T,h)$ and $X \in |P_b|$. Since
$\text{Decorate}(T,h)$ is alternating, each node
$\sigma^{\smallfrown}m$ is either a union node or a leaf. If
$\rho(\sigma^{\smallfrown}m) \leq_{*} b$, then the value of
$g(\sigma^{\smallfrown}m)$ is fixed as in Case 1. If $b <_{*}
\rho(\sigma^{\smallfrown}m)$, then $g(\sigma^{\smallfrown}m) = 1$ as
in Case 2. Together, these values determine $g(\sigma)$ uniquely.
$H_{b}^{X \oplus T}$ suffices to compute the values of
$g(\sigma^{\smallfrown}m)$ and it takes one extra jump to determine if
$g(\sigma^{\smallfrown}m) = 1$ for all $m$, and hence determine
$g(\sigma)$.

Case 4. Suppose $b <_{*} \rho(\sigma)$, $\sigma$ is an intersection
node in $\text{Decorate}(T,h)$ and $X \in |N_b|$. An analogous
argument to Case 2 shows that $g(\sigma) = 0$.

Case 5. Suppose $b <_{*} \rho(\sigma)$, $\sigma$ is a union node in
$\text{Decorate}(T,h)$ and $X \in |N_b|$. This case is analogous to
Case 3 and the unique value of $g(\sigma)$ can be determined with one
extra jump.

These cases are exhaustive, but if $\makeset{P_b} \cap \makeset{N_b} \neq \emptyset$, then
more than one case can apply. However, if $X \in \makeset{P_b} \cap \makeset{N_b}$, the
cases are compatible. In this
degenerate situation, we have that for any $\sigma$ such that $b <_*
\rho(\sigma)$, $g(\sigma) = 1$ if $\sigma$ is a union node and
$g(\sigma) = 0$ if $\sigma$ is an intersection node. This completes
the proof of (1) and (2).

For (3), if $X \in \makeset{P_b} \setminus \makeset{N_b}$, 
and if $\lambda$ is $\cup$, then $g(\lambda)=1$ 
just as above.  But if $\lambda$ is $\cap$, then we claim that 
for each $m$, $g(\langle m \rangle) = 1$.  
(Note that neither $\lambda$ nor $\langle m \rangle$ can be 
a leaf in $T$ because $b<_\ast a$ and the hypothesis on part (3) specifies that 
$b <_\ast \rho_T(\langle m\rangle)$ for each $m$).
If $m =2n$ 
for some $\langle n \rangle \in T$, or if $m = 2d+1$ 
for some $d>_\ast b$, then because
$b<_\ast \rho_T(\langle n \rangle)$ for all $n$,
and each $\langle m \rangle$ is a union, again we have
$g(\langle m \rangle) = 1$ for such $m$.
In the remaining case, when $m = 2d+1$ with $d \leq_\ast b$, 
then since $b$ is minimal such that 
$X \in \makeset{P_b} \cup \makeset{N_b}$, and 
$X \not\in \makeset{N_b}$, we have $X \in \makeset{N_d^c}$. 
So by Lemma \ref{lem:kO_induction}, $X \in \makeset{\Decorate(N_d^c, h)}$, 
so $g(\langle 2d+1\rangle) = 1$.
Since $g(\langle m \rangle) = 1$ for all $m$,
we have $g(\lambda) =1$ as well.
A complementary argument establishes (3b).\end{proof}

\section{$\DPB$ does not hold in $HYP$}\label{sec:dpb_no_hyp}

We now show that $\DPB$ is not a theory of hyperarithmetic analysis
by showing that $\DPB$ fails in the $\omega$-model $HYP$.
In brief, we let $E_a$ code a canonical universal $\Sigma^0_a$ set.
Applying this definition also to pseudo-ordinals $a^\ast$, 
we make a computable code for the set $$\bigcup_{b<_\ast a^\ast} \makeset{E_b} \cap \{X : b \text{ is least s.t. } X \leq_T H_b\}.$$
We decorate the code to give each $H_b$-computable
    set an $H_b$-computable evaluation map.  Then we argue that 
the result is a code which $HYP$ thinks is well-founded and 
completely determined, but which can have no $HYP$ Baire approximation.

 \begin{theorem}\label{thm:dpb_fails_in_hyp}
$\DPB$ does not hold in $HYP$.
\end{theorem}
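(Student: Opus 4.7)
The plan is to realize the sketch immediately preceding the theorem: construct a labeled Borel code that $HYP$ sees as well-founded and completely determined, but whose coded set encodes too much of the hyperarithmetic hierarchy to admit a Baire approximation in $HYP$. Fix a pseudo-ordinal $a^\ast \in \kO^\ast \setminus \kO$. For each $b <_\ast a^\ast$, by effective transfinite recursion (which extends to all of $\kO^\ast$), build a $b$-ranked labeled code $F_b$ whose intended set is $\makeset{E_b} \cap \{X : b \text{ is } <_\ast\text{-least with } X \leq_T H_b\}$, where $E_b$ is a canonical universal $\Sigma^0_b$ code, and the level-minimality clause is expressed using Proposition \ref{prop:Hformula}. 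Let $T$ be the tree whose root is a union and whose $b$-th immediate subtree is $F_b$; this is $a^\ast$-ranked and alternating. Then set $S = \Decorate(T, h)$ for a nice decoration generator with $(P_b, N_b)$ chosen so that $\{X : X \leq_T H_b\} \subseteq \makeset{P_b} \cup \makeset{N_b}$, with $\makeset{P_b}$ being exactly the intended target at level $b$ relative to $\{X : X \leq_T H_b\}$.

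Next I verify that $HYP$ regards $S$ as a completely determined Borel code. Well-foundedness holds in $HYP$ because the rank of $S$ sits in $\kO^\ast$ and $HYP$ does not distinguish $\kO^\ast$ from $\kO$. For complete determination, any $X \in HYP$ satisfies $X \leq_T H_b$ for some $b \in \kO$; taking $b$ to be $<_\ast$-least with this property, Lemma \ref{lem:2}(1)--(2) supplies a unique evaluation map for $X$ in $S$ that is $H_{b+O(1)}^X$-computable, hence in $HYP$. Part (3) of Lemma \ref{lem:2} further pins down that this evaluation assigns $g(\lambda) = 1$ precisely when $X$ lies in the target set attached to its minimal level, so the meaning $HYP$ assigns to $\makeset{S}$ is the intended diagonal union.

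The crux is to show $\makeset{S}$ has no Baire approximation inside $HYP$. Suppose for contradiction that $(U, V, \{D_n\})$ is such an approximation, with $U, V, \{D_n\} \leq_T H_c$ for some $c \in \kO$. Choose $b \in \kO$ with $b <_\ast a^\ast$ and $c <_\ast b$; this is possible because genuine computable ordinals are cofinal below every pseudo-ordinal. The comeager $\Pi^0_2(H_c)$ set $\bigcap_n D_n$ must meet every dense $H_c$-computable collection, and on it membership in $U$ (a $\Sigma^0_1(H_c)$ open set) coincides with membership in $\makeset{S}$. Using Lemma \ref{lem:2}(3) to locate the evaluation at the minimal level, and exploiting the universality of $E_b$, the plan is to produce reals of minimal level exactly $b$ inside $\bigcap_n D_n$ that encode a $\Sigma^0_b$-complete question. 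The Baire approximation then reduces the answer to an $H_c$-computable fact about whether such a real lies in $U$, and running this uniformly over a complete sequence of queries would $H_c$-compute $H_b$, contradicting $c <_\ast b$ in $\kO$.

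The main obstacle is this last step: constructing reals of prescribed minimal hyperarithmetic level $b$ that remain sufficiently generic relative to $H_c$ to fall inside $\bigcap_n D_n$, and simultaneously carry genuine $\Sigma^0_b$ information that the Baire approximation must resolve. The delicate points are that the level-minimality clause $L_b$ in the construction must not trivialize $\makeset{E_b} \cap \makeset{L_b}$ on a comeager set, and that the decorations must not swallow the intended $b$-level content before the Baire approximation can read it off; checking both will require combining Lemma \ref{lem:2}(3) with a careful density argument producing witnesses at each prescribed level.
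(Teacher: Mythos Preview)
Your construction of the decorated code and the verification that $HYP$ regards it as a completely determined Borel code are essentially what the paper does (the paper decorates $E_{a^\ast}$ itself rather than a fresh union of $F_b$'s, but that is a cosmetic difference, since the decorations already carry the slice information $\makeset{S_b}$).

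The genuine gap is in the contradiction step. Your plan fixes $c$ with $(U,V,\{D_n\}) \leq_T H_c$, picks $b >_\ast c$, and hopes to manufacture reals $X$ of minimal level exactly $b$ inside $\bigcap_n D_n$ whose $\Sigma^0_b$ content is resolved by the $H_c$-computable test ``$X \in U$?''. But that test is only $H_c$-computable \emph{given $X$}, and $X$ itself sits at level $b$; running it ``uniformly over a complete sequence of queries'' therefore shows at best that $H_b$ computes $H_b$, not that $H_c$ does. There is no way to strip the level-$b$ real out of the computation, so the reduction you sketch does not close. This is exactly the obstacle you flag in your last paragraph, and it is not a matter of checking density carefully: a different idea is needed.

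The paper's fix is to take $b$ equal to the level of the Baire approximation (not strictly above it) and then invoke the \emph{recursion theorem}: choose $e$ with $W_e^{H_b} = \{p : 0^e1\concat p \in V\}$. One then builds a single $X \in \bigcap_n D_n$ with $0^e1\concat p \prec X$ (for some $p$ with $0^e1\concat p \in U \cup V$) and $X \in \makeset{S_b}$; all the requirements are $H_b$-computable, so such $X$ exists in $HYP$. Lemma~\ref{lem:2} gives $X \in \makeset{\Decorate(E_{a^\ast},h)} \Leftrightarrow X \in \makeset{E_b}$, the Baire approximation gives $X \in \makeset{\Decorate(E_{a^\ast},h)} \Leftrightarrow X \in U$, and the self-referential choice of $e$ gives $X \in \makeset{E_b} \Leftrightarrow X \in V$. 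This yields the direct contradiction $X \in U \Leftrightarrow X \in V$ for a single real, with no uniformity needed. The recursion-theorem self-reference is the missing ingredient in your argument.
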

\begin{proof} Using Proposition \ref{prop:Hformula},
there is a computable procedure which, on inputs $a \in \kO$,
$e\in \mathbb N$, $p \in 2^{<\omega}$,
outputs an index for a $2^a$-ranked
computable $L_{\omega_1,\omega}$ 
formula $F_{a,e,p}$, which holds true if and only if $p \in W^{H_a}_e$. 
Transform each formula $F_{a,e,p}$ into a Borel code by 
swapping {\tt false} 
for $\emptyset$, and {\tt true} for $[0^e1\concat p]$.  
Then take the 
union of all of these, obtaining a code $E_a$ of rank $a + O(1)$
such that for all $a \in \kO$,
$$\makeset {E_a}  = \bigcup_{e,p \ :\ p \in W_e^{H_a}} [0^e1\concat p].$$
For any pseudo-ordinal $a^\ast$,
$W_{p(a^\ast)}$ is not well-founded, but it
has no hyperarithemtic descending sequence, so
$HYP$ believes $W_{p(a^\ast)}$ is well-founded. Then 
$HYP$ also believes that $E_{b}$ is well-founded 
for any $b <_\ast a^\ast$, because $E_b$ is $(b+O(1))$-ranked,
so any path through $E_b$ would reveal a 
descending sequence in $W_{p(a^\ast)}$.  
We may assume 
that $E_{b}$ are alternating 
and $(b+O(1))$-ranked for all $b \leq_\ast a^\ast$.  For the 
sake of a later application of Lemma \ref{lem:2}, 
note that we can also assume that the rank of 
$E_{a^\ast}$ is a
successor, so of the form $2^x$ for some $x$, 
and that for each $\langle n \rangle \in E_{a^\ast}$, 
the rank of $\langle n \rangle$ in $E_{a^\ast}$ is $x$.

Similarly, there is a computable procedure which, for each 
$b \in \kO$, outputs a $(b+O(1))$-ranked 
Borel code $S_b$ such that
$$\makeset{S_b} = \{ X \in 2^\omega : X \leq_T H_b 
\text{ and for all } c<_\ast b, X \not\leq_T H_c\}.$$
We think of $S_b$ as coding a slice of $HYP$.
Just as for $E_b$, we have that 
for any $b<_\ast a^\ast$, $HYP$ thinks that $S_b$ is 
well-founded.

For each $b<_\ast a^\ast$, define $P^b$ and $N^b$ so that
they are alternating, and
$$\makeset{P^b} = \makeset{S_b} \cap \makeset{E_b}, \qquad
\makeset{N^b} = \makeset{S_b} \cap \makeset{E_b^c}.$$

Observe that $P^b$ and $N^b$ can be both $(b+ k)$-ranked,  
where $k$ is some fixed finite ordinal.  Let 
$h$ be the function which, on input $b$, outputs 
$P_b = P^{b-k}$ and $N_b = N^{b-k}$ if the operation 
$b-k$ can be performed, and outputs a degenerate $b$-ranked 
tree coding the empty set, if $b$ is 
less than $k$ successors from a limit ordinal.

We claim that $\Decorate(E_{a^\ast},h)$ is completely determined in $HYP$.
Observe that $h$ is a nice decoration generator.  
Let $X \in HYP$.  Then there is some $b \in \kO$
with $b<_\ast a$ 
such that $X \leq_T H_b$.  Since $a^\ast$ is a pseudo-ordinal, 
$b+ O(1) <_\ast a^\ast$ 
is satisfied.  By the choice of $b$ we have 
$X \in \makeset{S_b} = \makeset{P_{b+k}}\cup\makeset{N_{b+k}}$. 
Therefore, by Lemma \ref{lem:2}, $X$ has a $HYP$ evaluation 
map.  Therefore, $\Decorate(E_{a^\ast}, h)$ is 
completely determined in $HYP$.

Suppose for contradiction that $\Decorate(E_{a^\ast}, h)$ has a 
$HYP$ Baire approximation. Let $b\in \kO$ with $b<_\ast a^\ast$
and with the Baire approximation $(U,V,\{D_n\}_{n\in\omega})\leq_T H_b$.
By the recursion theorem, there is an index $e$ 
such that 
$$W_e^{H_b} = \{p : 0^e1\concat p \in V\}$$
where $H_b$ is used to compute $V$.  
Choose $p$ with 
$0^e1\concat p \in U \cup V$, 
this is possible as $U \cup V$ is dense. 
Let $X \in HYP$ 
be such that 
\begin{enumerate}
\item $0^e1\concat p \prec X$ 
\item $X \leq_T H_b$ but $X\not\leq_T H_c$ for any $c<_\ast b$, 
\item $X \in D_n$ for all $n$.
\end{enumerate}
This is possible because the $D_n$, and the dense sets which 
need to be met to avoid being computed by $H_c$ for 
$c<_\ast b$, are uniformly $H_b$-computable.

Now $b + k$ 
is least such that $X \in \makeset{P_{b+k}}\cup \makeset{N_{b+k}} 
=\makeset{S_b}$.  By Lemma \ref{lem:2}, 
$X \in \makeset{\Decorate(E_{a^\ast},h)}$ if and only if 
$X \in \makeset{E_b}$.  Because $X$ meets each $D_n$ and 
$U \cup V$, by the definition of a Baire code, 
we have $X \in \makeset{\Decorate(E_{a^\ast},h)}$ if and only if 
$X \in U$.  To establish the contradiction, it suffices to 
show that $X \in \makeset{E_b}$ if and only if $X \in V$.

Observe $X \in \makeset{E_b}$, if and only if,
for some $q$ extending $p$, we have 
$0^e1\concat q \prec X$ and
$q \in W_e^{H_b}$.  But this happens 
if and only if for some such $q$, we have
$0^e1\concat q \in V$.
\end{proof}

  \section{$\DPB$ implies $HYP$ generics exist in $\omega$-models}\label{sec:dpb_implies_generics}

The next theorem shows that $\DPB$ implies the existence of hyperarithmetic
  generics in $\omega$-models.  
In short, if $\mathcal M$ has $Z$ but no $\Delta^1_1(Z)$-generics,
    there is a pseudo-ordinal $a^\ast$ which $\mathcal M$ thinks is well-founded.
This pseudo-ordinal can be used to construct a code for the 
following subset of $M$, where $E_b$ denotes a code for a universal $\Sigma^Z_b$ set:
    $$\bigcup_{b<_\ast a^\ast} \makeset{E_b} \cap \{X : b \text{ is least s.t. $X$ is not generic relative to } H_b^Z\}$$
After decorating this code, it becomes completely determined for every
non-$\Delta^1_1(Z)$-generic.  If this code has a Baire decomposition, 
meeting the associated dense sets creates a $\Delta^1_1(Z)$-generic.

\begin{theorem}\label{thm:dpb_implies_generics}
If $\mathcal M$ is an $\omega$-model which satisfies
  $\DPB$, then for every $Z\in \mathcal M$, there is a $G \in \mathcal M$
  such that $G$ is $\Delta^1_1$-generic relative to $Z$.
  \end{theorem}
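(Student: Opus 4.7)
The plan is to argue by contradiction, adapting the method of Theorem \ref{thm:dpb_fails_in_hyp} to the relativized setting. Assume $\mathcal M \models \DPB$ contains $Z$ but no $\Delta^1_1(Z)$-generic. By Proposition \ref{prop:dpb_implies_lwca}, $\mathcal M \models \LwCA$, so $\mathcal M$ is closed under hyperarithmetic reduction and $HYP(Z) \subseteq \mathcal M$.

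The first step, which I expect to be the main obstacle, is to locate a $Z$-pseudo-ordinal $a^\ast \in \kO^{\ast,Z} \setminus \kO^Z$ such that $\mathcal M$ contains no descending sequence in $W_{p(a^\ast)}^Z$, i.e., an $a^\ast$ that $\mathcal M$ believes to be well-founded. Since $\kO^{\ast,Z} \supsetneq \kO^Z$, candidates live in $HYP(Z) \subseteq \mathcal M$; the difficulty is establishing that at least one of them cannot be seen as ill-founded by $\mathcal M$. I would argue that if $\mathcal M$ contained descending sequences for every candidate pseudo-ordinal, these sequences could be assembled using the hyperarithmetic closure of $\mathcal M$ into a real meeting each $\Delta^1_1(Z)$ dense open, producing a $\Delta^1_1(Z)$-generic in $\mathcal M$ and contradicting the hypothesis.

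Next, I would adapt the Borel code construction of Theorem \ref{thm:dpb_fails_in_hyp} to the relativized setting, with ``$X$ first fails $b$-genericity relative to $Z$'' taking the role of ``$X \leq_T H_b$ but $X \not\leq_T H_c$ for $c <_\ast b$''. Concretely, let $E_b$ be an alternating, $(b + O(1))$-ranked Borel code for a universal $\Sigma^Z_b$ open set, and let $S_b$ code $\{X : b$ is $<_\ast$-least such that $X$ is not $b$-generic relative to $Z\}$; define $\makeset{P_b} = \makeset{S_b} \cap \makeset{E_b}$ and $\makeset{N_b} = \makeset{S_b} \cap \makeset{E_b^c}$, assembling these into a nice decoration generator $h$. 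Form $T = \Decorate(E_{a^\ast}, h)$, which $\mathcal M$ sees as well-founded by the choice of $a^\ast$. Because every $X \in \mathcal M$ is by assumption not $\Delta^1_1(Z)$-generic, $X$ belongs to $\makeset{S_b}$ for some least $b <_\ast a^\ast$, placing $X \in \makeset{P_b} \cup \makeset{N_b}$; Lemma \ref{lem:2} then supplies an evaluation map, so $T$ is completely determined in $\mathcal M$.

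Applying $\DPB$ yields a Baire approximation $(U,V,\{D_n\}) \in \mathcal M$ for $T$, of complexity bounded by $H_b^Z$ for some fixed $b \in \kO^Z$. I would then replay the recursion-theoretic endgame of Theorem \ref{thm:dpb_fails_in_hyp} verbatim: apply the recursion theorem to find $e$ with $W_e^{H_b^Z} = \{p : 0^e 1 \concat p \in V\}$, and pick $G \in HYP(Z) \subseteq \mathcal M$ extending some $0^e 1 \concat p$ with $p \in U \cup V$, satisfying $G \in \makeset{S_b}$, and meeting each $D_n$. The Baire approximation forces $G \in \makeset{T} \iff G \in U$, while Lemma \ref{lem:2} combined with the fixed-point choice of $e$ gives $G \in \makeset{T} \iff G \in V$; since $U \cap V = \emptyset$, this is the desired contradiction.
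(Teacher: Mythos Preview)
Your construction of the decorated code $T=\Decorate(E_{a^\ast},h)$ and your argument that $T$ is completely determined under the contradiction hypothesis are essentially the paper's. The first step (locating $a^\ast$) is handled in the paper by a case split: either $\mathcal M$ is a $\beta$-model, in which case the $\Sigma^1_1(Z)$ tree whose paths compute $\Delta^1_1(Z)$-generics already has a path in $M$, or else some ill-founded tree $S\in M$ is believed well-founded, and Proposition~\ref{prop:LObound} applied to $S$ (with $Z$ replaced by $Z\oplus S$) gives the required $a^\ast$. Your proposed argument for this step is vaguer than that, but it is salvageable along these lines.

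The real gap is in your endgame. You assert that the Baire approximation $(U,V,\{D_n\})$ produced by $\DPB$ has ``complexity bounded by $H_b^Z$ for some fixed $b\in\kO^Z$'', and then construct $G\in HYP(Z)$ meeting each $D_n$ with $G\in\makeset{S_b}$. But $\mathcal M$ is an arbitrary $\omega$-model of $\DPB$, not $HYP(Z)$: the Baire approximation lies in $M$, and there is no reason whatsoever for it to be hyperarithmetic in $Z$. Without that bound you cannot choose a single level $b$ at which to run the recursion-theorem trick, nor can you build a $G\in HYP(Z)$ meeting the $D_n$. This is exactly why the paper does \emph{not} replay the endgame of Theorem~\ref{thm:dpb_fails_in_hyp}. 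Instead it passes (via Proposition~\ref{prop:noextra}) to a Baire \emph{decomposition} $(U_\sigma,V_\sigma)_{\sigma\in T}$, forms the associated dense sets $D_i$ and their shifts $D_{i,p}=\{q:p\concat q\in D_i\}$, and proves directly that every $\Delta^1_1(Z)$ dense open $B$ contains some $D_{\ell,p}$. Hence any $G\in M$ meeting all the $D_{i,p}$ (and such $G$ exists in $M$ by a routine construction inside $\ACA$) is $\Delta^1_1(Z)$-generic. This argument never needs the Baire data to be hyperarithmetic in $Z$; it only needs it to be in $M$. Your recursion-theorem approach cannot be made to work without an additional idea to control the complexity of the Baire approximation, and no such control is available here.
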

\begin{proof}
Let $M$ be the second-order part of an $\omega$-model which satisfies 
$\DPB$.  Then by Proposition \ref{prop:dpb_implies_lwca}, whenever $Z \in M$, 
we also have that $H_b^Z \in M$ for every $b \in \mathcal O^Z$.

Case 1: Suppose $\mathcal M$ is a $\beta$-model (that is, for every 
tree $T \in M$, if $\mathcal M \models ``T \text{ is well-founded}$'', 
then $T$ is truly well-founded.)  Let $Z \in M$.
Because $\{G : G \text{ is $\Delta^1_1(Z)$-generic}\}$ 
is a $\Sigma^1_1(Z)$ set, the $Z$-computable tree corresponding to the 
$\Sigma^1_1(Z)$ statement ``there is a $\Delta^1_1(Z)$-generic'' 
has a path in $M$, and that path computes a $\Delta^1_1(Z)$-generic 
$G$. 
Therefore, the theorem holds when $\mathcal M$ is a $\beta$-model.

Case 2: Suppose that there is some tree $S \in M$ which $\mathcal M$ 
believes is well-founded, but in reality is ill-founded.  
Let $Z \in M$, and without loss of generality assume that 
$Z \geq_T S$ (without this assumption we find a 
$\Delta^1_1(Z\oplus S)$-generic $G$, but such $G$ is also
$\Delta^1_1(Z)$-generic.)
By Proposition \ref{prop:LObound}, there is a $Z$-computable function 
which, given the index of a truly 
well-founded $Z$-computable linear order, outputs an 
element of $\kO^Z$ which bounds its order type.
Applying that function to the Kleene-Brouwer ordering on $S$ produces a 
pseudo-ordinal $a^\ast \in \kO^{\ast,Z}$ such that $W^Z_{p(a^\ast)}$ is 
not truly well-founded, but it has no descending sequence in $M$.

Relativize the definitions of $<_\ast$, ranked trees, 
$\Decorate$, and Lemmas \ref{lem:kO_induction} and \ref{lem:2} to $Z$.
Note that because $M$ is hyperarithmetically closed, all the evaluation
maps provided by relativized versions of Lemmas \ref{lem:kO_induction}
and \ref{lem:2} are in $M$.

As in the previous theorem, 
there is a $Z$-computable procedure which maps any $b \in \kO^Z$ 
to an alternating code $E_b$ of rank $b + O(1)$ such that 
$$\makeset{E_b} = \bigcup_{e,r\ :\ r \in W_e^{H_b^Z}} [0^e1\concat r].$$
Further, using Proposition \ref{prop:Hformula},
there are $Z$-computable procedures which map 
each $b \in \kO^Z$ to a code $S_b$ of rank $b + O(1)$ such that
\begin{multline*}
\makeset{S_b} = \{X \in 2^\omega : \text{$X$ is not 1-generic relative 
to $H_b^Z$,}\\ \text{ but for all $c<_\ast^Z b$, $X$ is 1-generic relative to $H_c^Z$}\},\end{multline*}
and alternating codes $P_b$ and $N_b$ of rank $b$ such that 
$$\makeset{P_{b}} = \makeset{S_{b-O(1)}} \cap \makeset{E_{b-O(1)}}, \qquad
\makeset{N_{b}} = \makeset{S_{b-O(1)}} \cap \makeset{E_{b-O(1)}^c},$$
(and for $b$ that are within $O(1)$ of a limit ordinal, $P_b$ and 
$N_b$ are degenerate 
$b$-ranked trees coding the empty set as before).  

Let us be 
a little more specific and say that the code for $P_b$ is
made exactly as one would expect: it is $\operatorname{Alternate}(P_b')$,
where 
$$P_b' = \{\lambda\} \cup \langle 0 \rangle \concat S_{b-O(1)} \cup \langle 1 \rangle \concat E_{b-O(1)},$$
the root $\lambda$ is a $\cap$ of rank $b$ in $P_b'$, and all other ranks and labels
are inherited from their respective subtrees.  We remark that because 
the root of $P_b'$ is a $\cap$, the root of $E_{b+O(1)}$ is a $\cup$,
and $E_{b+O(1)}$ is already alternating, we have $(P_b)_{\langle 1\rangle} = E_{b-O(1)}$.

Because the outputs of Proposition \ref{prop:Hformula} are well-defined for
all $b \in \kO^Z$, so also are the codes $P_b$ and $N_b$.  Also, for
any $b<_\ast^Z a^\ast$, $M$ believes these codes to be well-founded
because they are $b$-ranked.

Let $h$ be the name of the nice decorating 
function mapping $b$ to 
$(P_b,N_b)$, and consider the code $T := \Decorate^Z(E_{a^\ast},h)$.
Observe that since $\lambda$ in $E_{a^\ast}$ is a $\cup$, 
we know that $\lambda$ in $T$ is a $\cup$.

If $T$ is not completely determined, let $G \in M$ 
be such that $G$ does not have an evaluation map in $T$.  We claim that $G$ is 
$\Delta^1_1(Z)$-generic.
If $G$ is not $\Delta^1_1(Z)$-generic, then there is some 
least $b \in \kO^Z$ with $b<_\ast^Z a^\ast$ 
such that $G$ is not 1-generic relative 
to $H_b^Z$.  Then we would have $G \in \makeset{S_b}$, 
and therefore by Lemma \ref{lem:2}, $G$ would have an
evaluation map in $T$.

If $T$ is completely determined, then since $\mathcal M$ 
models $\DPB$,
let $(U_\sigma,V_\sigma)_{\sigma \in T} \in M$ 
be a Baire decomposition for $T$.  Let $\{D_i\}_{i<\omega} \in M$ 
be the associated sequence of dense sets as in Proposition 
\ref{prop:decomposition_implies_approximation}.  
For any $p \in 2^{<\omega}$, define 
$D_{i,p} = \{q : p\concat q \in D_i\}$.
We claim that any $G \in \cap_{i,p} D_{i,p}$ 
is $\Delta^1_1(Z)$-generic.  For this we argue that
every dense open $B \in \Delta^1_1(Z)$ actually contains
$D_{i,p}$ for some $i,p$.
Let $b \in \kO^Z$ and $e$ be such that 
$B = W_e^{H_b^Z}$.  Then
$T_{\langle 2(b+O(1))+1\rangle} = \Decorate(P_{b+O(1)},h)$,
where $\makeset{P_{b+O(1)}} = |S_b| \cap |E_b|$.  Therefore, 
there is some $\sigma \in T$ such that 
$T_\sigma = \Decorate(E_b,h)$.  Since 
$E_b$ has a union at the root, this $\sigma$ is a union.
Let $p = 0^e1$.
We claim that $D_{\ell,p} \subseteq B$, where 
$D_\ell = \cup_m U_{\sigma\concat m}\cup V_{\sigma}$.
Let $q$ be such that $p\concat q \in D_\ell$.
To finish the proof, we need to show that $[q] \subseteq B$.

For the remainder of this proof, any $X$ which meets 
the following conditions
will be called \emph{sufficiently generic}:
\begin{itemize}
\item $X \in \cap_i D_i$, and
\item $X$ is 1-generic relative to $H_{b+O(1)}^Z$
\end{itemize}
Observe that for every $r \in 2^{<\omega}$, there is a 
sufficiently generic $X \in M$ with $r \prec X$.
Also, observe that for all such $X$ and all 
codes $R$ which are $c$-ranked for some $c\leq_\ast b +O(1)$,
the second condition implies that $c, X$ and $R$ 
satisfy the conditions of Lemma \ref{lem:kO_induction},
and so $X \in \makeset{\Decorate(R,h)}$ 
if and only if $X \in \makeset{R}$. Finally, 
by Proposition \ref{prop:decomposition_implies_approximation},
for all sufficiently generic $X$ and all $\tau \in T$,
we have $X \in \makeset{T_\tau}$ if and only if 
$X \in U_\tau$.

If $X$ is sufficiently generic and $p\concat q \prec X$, then 
$X \in p\concat B$, and so 
$X \in \makeset{E_b}$, and so by Lemma \ref{lem:kO_induction},
$X\in \makeset{\Decorate(E_b,h)} = \makeset{T_\sigma}$.
Therefore, it is impossible 
that $X \in V_\sigma$, so we conclude
$p\concat q \in U_{\sigma\concat m}$ for some $m$.  
Therefore, for sufficiently generic $X$ with 
$p\concat q \prec X$, we have 
$X \in \makeset{T_{\sigma\concat m}}$.

If $m = 2c + 1$ for some $c\leq_\ast b + O(1)$, then 
$T_{\sigma\concat m} = \Decorate(P_c,h)$.  But
for any sufficiently generic $X$, we have 
$X \not\in \makeset{P_c}$, so this case is impossible. 
Therefore, $m = 2n$ for some 
$\langle n \rangle \in E_b$.  It follows from the 
definition of $\Decorate$ that 
$T_{\sigma\concat m} = \Decorate((E_b)_{\langle n\rangle},h)$.
So for sufficiently generic $X$ with $p\concat q \prec X$, 
we have $X \in \makeset{(E_b)_{\langle n \rangle}}$.

Now we will use a property of the codes $E_b$ which 
follows from how they are defined at the beginning of the 
proof of Theorem \ref{thm:dpb_fails_in_hyp}.  
The code $E_b$ was obtained as the union of 
many codes $F_{b,e,r}$, at whose leaves the only 
options are $[0^e1\concat r]$ or $\emptyset$.  
The code $E_b$ was also post-processed so that it would 
be alternating, but while this process can 
break up the first-level subtrees $F_{b,e,r}$, it can never combine 
them together.  (See the discussion at the end of 
Section \ref{sec:breakapart} for details.)
Therefore, for every 
$\langle n \rangle \in E_b$, there is an $r$ such 
that whenever $\langle n \rangle\concat \tau \in E_b$ 
is a leaf, its attached clopen set is either 
$[0^e1\concat r]$ or $\emptyset$.  Fixing $r$ 
associated to $n = m/2$ for the $m$ found above, 
we observe that an evaluation map on $(E_b)_{\langle n\rangle}$ 
that works for 
one $Y \in [0^e1\concat r]$ works for all such $Y$, 
and we conclude that 
$\makeset{(E_b)_{\langle n\rangle}}$ is equal to either 
$\emptyset$ or $[0^e1\concat r]$.  
It must be the latter because
$X \in \makeset{(E_b)_{\langle n\rangle}}$ for all sufficiently 
generic $X$ with $p\concat q \prec X$.  It follows 
that $[r]\subseteq B$.  Furthermore, any sufficiently 
generic $X$ that does not extend $p\concat r$ must 
be out of $\makeset{(E_b)_{\langle n \rangle}}$, so it 
must be that $[q]\subseteq [r]$.
Therefore, $[q] \subseteq B$, as desired.
\end{proof}

\section{Application to the Borel dual Ramsey theorem}\label{sec:bdrt}

As an application of Theorem \ref{thm:1}, we identify a natural 
formulation of the Borel dual Ramsey 
theorem for 3 partitions and $\ell$ colors ($\BorelDRT^3_\ell$) 
as a principle which lies strictly below $\ATR$, but all of 
whose $\omega$-models are closed under hyperarithmetic reduction.

\begin{theorem}[Borel dual Ramsey theorem, \cite{carlsonsimpson1984}]
For every Borel $\ell$-coloring of the set of partitions of $\omega$ 
into exactly $k$ pieces, there is an infinite partition $p$ of $\omega$ 
and a color $i<\ell$ such that every way of coarsening $p$ 
down to exactly $k$ pieces is given color $i$.
\end{theorem}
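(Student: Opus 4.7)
The plan is to proceed by induction on the Borel rank of the coloring, using the property of Baire in a suitable topology on the space of infinite partitions as the engine of the inductive step, with a finitary dual Ramsey theorem handling the base case. This mirrors the standard strategy used to prove Galvin--Prikry, adapted from subsets of $\omega$ to partitions of $\omega$.

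First I would equip the space $(\omega)^\omega$ of partitions of $\omega$ into infinitely many blocks with an Ellentuck-style topology, in which a basic open neighborhood consists of all infinite partitions extending a fixed finite ``stem'' in a prescribed way. A Borel $\ell$-coloring $c$ of partitions into exactly $k$ blocks, together with the map sending an infinite partition $p$ to its set $[p]_k$ of $k$-coarsenings, translates the desired monochromatic conclusion into a statement about Borel subsets of this partition space. The base case is clopen colorings: here each coarsening is colored according to a finite initial segment of itself, and so the statement reduces to the finitary dual Ramsey theorem of Graham--Rothschild, applied along a fusion of refinements in order to produce an infinite partition all of whose $k$-coarsenings receive the same color.

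For the inductive step, suppose the theorem holds for all Borel colorings of rank less than $\alpha$, and let $c$ have rank $\alpha$. The color classes $A_i = c^{-1}(i)$ are Borel, and by the property of Baire each $A_i$ differs from an open set in the partition topology by a meager set. I would then perform a fusion argument producing an infinite partition $q$ whose cone of $k$-coarsenings avoids the relevant meager exceptional sets, so that on $[q]_k$ membership in each $A_i$ is determined by open conditions; applying the clopen base case to the resulting essentially clopen coloring on coarsenings of $q$ yields a further refinement $q'$ on which $c$ is constant.

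The main obstacle is carrying out the fusion step: ensuring that a single infinite partition $q$ simultaneously meets countably many dense open sets (to secure the Baire-category reduction) while still supporting the combinatorial dual Ramsey argument on its coarsenings. This requires a careful interleaving of ``generic'' moves (extending the stem so as to enter each prescribed dense open set) with ``combinatorial'' moves (extending the stem by variable-word-style additions in the sense of Carlson--Simpson), in such a way that the final partition is at once topologically generic and combinatorially controlled. The delicate coordination of these two kinds of extensions --- rather than any single step in isolation --- is what makes the Borel dual Ramsey theorem substantially more demanding than either its finitary or its clopen counterpart.
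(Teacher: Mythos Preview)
The paper does not contain a proof of this theorem: it is quoted as a result of Carlson and Simpson (the citation \texttt{carlsonsimpson1984}) and is used only as background for the reverse-mathematical analysis that follows. So there is no ``paper's own proof'' to compare your proposal against.

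That said, a brief remark on your sketch. The standard Carlson--Simpson argument does set up an Ellentuck-style topology on $(\omega)^\omega$ and shows that every subset with the property of Baire in that topology is completely Ramsey; since Borel sets have the Baire property there, the theorem follows. Your outline mixes this with an induction on Borel rank, but the inductive framing is superfluous and slightly misleading: once you invoke the property of Baire for the color classes, the rank of the coloring plays no further role, so the ``inductive step'' never actually uses the inductive hypothesis. The real content is (i) showing open sets in the partition topology are Ramsey (this is where the combinatorial core---variable words and a Hales--Jewett/Carlson-type lemma---enters, not Graham--Rothschild alone), and (ii) showing nowhere dense sets are Ramsey-null via a fusion. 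Your identification of the fusion as the delicate point is correct, but the structure should be ``Baire implies Ramsey in this topology'' rather than an induction on rank.
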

Since the set of partitions of $\omega$ into exactly $k$ pieces 
can be coded naturally as a Borel subset of $k^\omega$, a natural 
way to formulate the hypotheses of the above theorem is roughly
``Whenever there are Borel codes $T_1,\dots T_\ell$ such that 
for every $X \in k^\omega$, we have $X \in \makeset{\cup_{i<\ell} T_i}$, 
...''
(See below for a precise formalization).

Therefore, the Borel dual Ramsey theorem has a natural formulation 
in terms of completely determined Borel sets.  In
\cite{ProemelVoigt1985, DFSW}, it was shown that a solution to 
$\BorelDRT^k_\ell$ can in general be obtained by a 
two-step process:
\begin{enumerate}
\item Use the fact that every Borel set has the property of Baire 
to come up with a Baire approximation to each color in the given coloring.
\item Apply a purely combinatorial principle $\mathsf{CDRT}^k_\ell$ 
to a coloring of $(k-1)^{<\omega}$ obtainable from 
the Baire approximation from (1).
\end{enumerate}
If we represent the coloring in the natural way described below, then 
$\DPB$ can be used to carry out (1).  It was known to Simpson 
(see \cite{DFSW}) that 
$\mathsf{CDRT}^3_\ell$ follows from Hindman's Theorem ($\mathsf{HT}$), 
which follows from $\ACA^+$ by \cite{BlassHirstSimpson1985}.  
Therefore, the following natural formalization of
$\BorelDRT^3_\ell$ follows from $\DPB + \ACA^+$. We first 
give the formalization of the space of $k$-partitions of 
$\omega$, and then the formalization of $\BorelDRT^3_\ell$.

\begin{definition}[Partitions of $\omega$, \cite{DFSW}]
In $\RCA$, a partition of $\omega$ into exactly $k$ pieces 
is a function $p \in k^\omega$ such that $p$ is surjective, 
and for each $i<k-1$, 
$$\min \{ n : p(n) = i\} < \min \{n : p(n) = i+1\}.$$
A partition of $\omega$ into infinitely many pieces 
is a surjective function $p \in \omega^\omega$ which 
satisfies the above condition for each $i \in \omega$.
\end{definition}

The set of partitions described above is an open subset of $k^\omega$ 
representable in $\RCA$ by a completely determined Borel code, as the 
reader can verify.  (For the case $k=3$, 
the set in question is the union of the 
sets $O_{a,b}$ introduced at the start of the proof of Theorem
\ref{thm:hypclosed}.)  Let $P_3$ denote this completely determined Borel code
in the case $k=3$.

\begin{definition}[Formal Borel dual Ramsey theorem for $3$ partitions 
and $\ell$ colors]
In $\RCA$, $\BorelDRT^3_\ell$ is the principle which states: 
Whenever $T_0,\dots T_{\ell-1}$ are Borel codes such that for all 
$X \in \makeset{P_3}$, we have $X \in \makeset{ \bigcup_{i<\ell} T_i}$, 
then there is an infinite partition $p$ of $\omega$ 
and a color $i<\ell$ such that whenever $X \in \makeset{P_3}$, 
$X \circ p \in \makeset{T_i}$.
\end{definition}

We would like to say the hypotheses of the theorem imply that the
$\{T_i\}_{i<\ell}$ are all completely determined.  This is not quite true 
(perhaps $X \in \makeset {T_i}$ is not completely determined for 
some $X \not\in \makeset{P_3}$).  However, a small modification
of the existing codes makes them completely determined.

\begin{lemma}
$(\ACA)$
Suppose that $S$ is a completely determined Borel code
and $T$ is a Borel code.  Suppose that for all $X \in \makeset{S}$,
there is an evaluation map for $X$ in $T$.  Then
there is a completely determined Borel code $R$ such that 
for all $X \in \makeset{S}$, we have
$$X \in \makeset{T} \iff X \in \makeset{R}.$$
\end{lemma}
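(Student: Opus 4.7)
The plan is to build $R$ by ``guarding'' each interior node of $T$ with a copy of an appropriate completely determined code. Specifically, construct $R$ from $T$ by appending, at every intersection node $\sigma$ of $T$, an additional child whose subtree is a copy of $S$, and at every union node $\sigma$ of $T$, an additional child whose subtree is a copy of $S^c$. This is an effective operation on codes, and $R$ is well-founded because $T$ and $S$ are; it is patently a labeled Borel code.

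I would then verify that $R$ is completely determined by taking an arbitrary $X \in 2^\omega$. Since $S$ is completely determined, there are unique (under $\ACA$) evaluation maps for $X$ in $S$ and in $S^c$. In the case $X \in \makeset{S}$, the appended $S$-subtrees all evaluate to $1$ at their roots and the appended $S^c$-subtrees to $0$, so the hypothesized evaluation map $g$ for $X$ in $T$ extends to one for $X$ in $R$: at each intersection node in the $T$-part the guard's value $1$ is absorbed, and at each union node the guard's value $0$ is absorbed. The root value is preserved, yielding $X \in \makeset{R} \iff X \in \makeset{T}$.

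In the case $X \notin \makeset{S}$, the appended $S$-subtrees evaluate to $0$ and the appended $S^c$-subtrees to $1$. I would then define the evaluation map on the $T$-part by decree: every intersection node gets value $0$, every union node gets value $1$, and each leaf its clopen value. This is internally consistent because each intersection in $R$ now has a child (its $S$-guard) witnessing $0$, and each union has a child (its $S^c$-guard) witnessing $1$; the remaining children may take arbitrary values without disturbing the decreed value at the parent.

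$\ACA$ is used to assemble these piecewise maps into a single evaluation function via arithmetic comprehension. The main obstacle will be verifying consistency in the $X \notin \makeset{S}$ case: the ``decreed'' values at the $T$-part nodes must be compatible with the otherwise unconstrained values propagating up from the leaves. This works precisely because of the deliberate asymmetry in the choice of guards (intersections receive a guard that can force $0$, unions one that can force $1$); without this asymmetry $R$ would fail to be completely determined, and this is exactly where one might naively get stuck trying the simpler construction $R = S \cap T$.
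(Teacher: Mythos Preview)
Your construction is correct and complete. The paper takes a slightly different route: rather than inserting a guard at every interior node, it modifies only the leaves of $T$, replacing each leaf coding a clopen set $C$ with an intersection node whose two children are a copy of $S$ and the original clopen leaf $C$. When $X \notin \makeset{S}$, each such replaced leaf evaluates to $0$ (its $S$-child kills the intersection), and then the constant-$0$ assignment on all original $T$-nodes is a valid evaluation map, since a union of all-$0$ values and an intersection containing a $0$ are both $0$. When $X \in \makeset{S}$, the replaced leaves recover their original clopen values and the hypothesized evaluation map on $T$ lifts directly.

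The paper's construction is leaner: it touches only the leaves, and in the $X \notin \makeset{S}$ case the verification is uniform (everything is $0$) rather than requiring your alternating $0$/$1$ assignment on intersection/union nodes. Your construction, on the other hand, makes the ``guard'' mechanism explicit and locally visible at every interior node, and it treats unions and intersections in a pleasingly dual way. Both approaches correctly sidestep the naive $R = S \cap T$ pitfall that you (and the paper, in a parenthetical remark) identify.
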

\begin{proof}
Let $R$ be obtained from $T$ by replacing each leaf $\sigma$ in $T$ 
with the intersection of $S$ and the clopen set coded by $\ell(\sigma)$ in $T$.
If $X \in \makeset S$, then an evaluation map for $X$ in $R$ is 
obtained by starting with an evaluation map for $X$ in $T$ and then
filling in the evaluation map for $X \in \makeset{S}$ at all the places where 
$S$ appears in $R$.  If $X \in \makeset{S^c}$, an evaluation map for 
$X$ in $R$ is obtained by filling in all the original nodes of $T$ with 0, 
filling in the evaluation map for $X$ in $S$ at all the places where 
$S$ appears in $R$, and filling in the correct values on the remaining leaves
which were copied from $T$.

(Note: it does not work to let $R$ be simply the intersection of $S$ and $T$,
because the definition of completely determined requires that 
the entire evaluation map be filled out, even if most of it is not used.)
\end{proof}

It follows that if $(T_i)_{i<k}$ satisfy the hypotheses of the formal
Borel dual Ramsey theorem above, they can be taken to be completely 
determined without loss of generality.  Therefore, the discussion 
preceding the formal definitions proves that 
$\DPB + \ACA^+ \vdash \BorelDRT^3_\ell$ over $\RCA$.

The $\omega$-model which 
was constructed to prove Theorem \ref{thm:1} is closed 
under hyperarithmetic reduction, and therefore satisfies $\ACA^+$
as well as $\DPB$.  Therefore, $\BorelDRT^3_\ell$ 
holds in this model, while $\ATR$ does not.  This shows that 
the formulation of $\BorelDRT^3_\ell$ discussed 
here is strictly weaker than $\ATR$.

On the other hand, we have the following, which essentially follows 
from a more detailed version of the analysis in Section 4 of \cite{DFSW}.
\begin{theorem}\label{thm:hypclosed}  Let $\ell \in \omega$ with $\ell \geq 2$.
Every $\omega$-model of $\BorelDRT^3_\ell$ is closed 
under hyperarithmetic reduction.
\end{theorem}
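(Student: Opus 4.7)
The strategy, following the analysis in Section~4 of \cite{DFSW}, is to show that for every $Z \in M$ and every $a \in \kO^Z$, the jump hierarchy $H_a^Z$ lies in $M$. Because every $X \leq_h Z$ is $T$-reducible to some $H_a^Z$, this gives closure of $M$ under hyperarithmetic reduction. I would carry this out by induction on $a \in \kO^Z$, with the induction carried out externally (outside $M$), using the internal truth of $\BorelDRT^3_\ell$ at each stage to supply the monochromatic partitions needed.

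As the base case, I would first show that $\BorelDRT^3_\ell$ implies $\ACA$ in $M$, by constructing, for any $Z \in M$, two $Z$-computable finite-rank Borel codes $T_0, T_1$ covering $\makeset{P_3}$ whose color on a 3-partition $q$ is determined by a clopen condition reading whether certain indices have halted in $Z'$; the codes are trivially completely determined in $M$ because of their finite rank, and any infinite partition $p$ monochromatic for the coloring lets $Z \oplus p$ compute $Z'$. For the inductive step, assuming $H_b^Z \in M$ for all $b <_\ast a$, I would construct a completely determined Borel code $T_0, T_1 \in M$, of rank $a + O(1)$, covering $\makeset{P_3}$, such that any monochromatic infinite partition $p$ satisfies $H_a^Z \leq_T Z \oplus p$. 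The color of a coarsening $X \circ p$ is determined by reading the first few blocks of $X \circ p$ as a code for a query of the form ``does $n$ lie in the $b$th iterate of the jump of $Z$?'' and evaluating this query via an $H_b^Z$-computable procedure for the appropriate $b <_\ast a$. The hypotheses of the earlier lemma on modifying codes, applied with $S = P_3$, let me replace the resulting partial code by one which is completely determined outside $\makeset{P_3}$ as well; since the $H_b^Z$ for $b <_\ast a$ are all in $M$ by the induction hypothesis, the code itself is in $M$ and is completely determined there.

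With the coloring in hand, invoking $\BorelDRT^3_\ell$ inside $M$ yields a monochromatic infinite partition $p \in M$, and because monochromaticity forces the color to match the same bit of $H_a^Z$ along every coarsening, the structure of $p$ together with $Z$ computes $H_a^Z$. Hence $H_a^Z \in M$, closing the induction. The union over $a \in \kO^Z$ and over $Z \in M$ delivers closure of $M$ under hyperarithmetic reduction.

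The main obstacle is the joint requirement that the coloring be a completely determined code \emph{in $M$} (so that $\BorelDRT^3_\ell$ genuinely applies) while also being combinatorially rigid enough that any monochromatic partition exposes $H_a^Z$. This is exactly the refinement over the bare analysis in \cite{DFSW}: in \cite{DFSW} the colorings need only be Borel codes, whereas here they must be completely determined, which forces the inductive use of the hypothesis $H_b^Z \in M$ at lower levels in order to have the requisite evaluation maps in $M$. The combinatorics of encoding one bit of $H_a^Z$ into a coarsening, and the verification that monochromaticity across all coarsenings forces agreement with the chosen bit, proceed exactly as in the $\ATR$-style construction of Section~4 of \cite{DFSW} and are routine once the ranking is set up correctly.
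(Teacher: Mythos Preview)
Your overall scaffolding matches the paper: reduce to $\ell=2$, do an external induction along $\kO^Z$, handle the base case by showing $\mathcal M\models\ACA$ via a finite-rank coloring whose monochromatic partitions compute $Z'$, and at each later stage use the inductive hypothesis $H_b^Z\in M$ for $b<_\ast a$ to certify that the relevant Borel code is completely determined in $M$. That part is fine and is exactly what the paper does.

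The gap is at the limit stage. Your description of the coloring---``read the first few blocks of $X\circ p$ as a code for a query `does $n$ lie in the $b$th iterate of the jump of $Z$?' and evaluate it''---does not yield a mechanism by which a single monochromatic color (one of two values) lets $Z\oplus p$ recover infinitely many bits of $H_a^Z$. Monochromaticity only tells you that every coarsening of $p$ gets the \emph{same} color; it does not let you vary the query and read off different answers. The paper's argument does not attempt to read bits directly. Instead it invokes Jockusch's theorem that the hyperarithmetic sets are exactly those computable from any function dominating a suitable $\Delta^1_1$ modulus: uniformly in $d\in\kO^Z$ there is an increasing function $f_d\leq_T H_d^Z$ such that any $g$ dominating $f_d$ satisfies $H_d^Z\leq_T Z\oplus g$. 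Writing the limit notation as $3\cdot 5^e$ with $d_i=\Phi_e(i)$, the coloring assigns to a $3$-partition with block minima $a<b$ the color $1$ iff $b\geq f_{d_i}(a)$ for all $i\leq a$; this is a $(d_a+O(1))$-ranked code, completely determined in $M$ by the inductive hypothesis. Monochromaticity then forces the principal function $i\mapsto p_i$ to dominate every $f_{d_i}$, so $Z\oplus p$ uniformly computes each $H_{d_i}^Z$ and hence $H_{3\cdot 5^e}^Z$. This domination idea is the missing ingredient; once you insert it, your outline becomes the paper's proof.
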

\begin{proof} It suffices to consider the case $\ell = 2$.  We will 
first define some important subsets of $3^\omega$.
For each $a,b$ with $0<a<b$, let $O_{a,b}$ be the 
clopen set given by the finite collection of strings 
$$O_{a,b} = \{ \sigma \in 3^{b+1} : a = \min\{ n : \sigma(n) = 1\}
\text{ and } b = \min\{ n : \sigma(n) = 2\} \}$$
Then the set of partitions of $\omega$ into exactly 3 pieces is 
given by $P_3 = \bigcup_{0<a<b} O_{a,b}$.  

Let $M$ be the second-order part of an 
$\omega$-model $\mathcal M$ of $\BorelDRT^3_2$.  We first show 
that $\mathcal M$ satisfies $\ACA$.  Let $A \in M$.  Let $R$ 
be the following labeled Borel code.\footnote{We use 
standard computability-theoretic notation: for any 
$s \in \mathbb N$, let $A'_s$ denote 
$\{x < s : \Phi_{x,s}^A(x) \downarrow\}$, and for 
any $X$ let 
$X\uhr s$ denote the string $\sigma$ of length $s$ 
describing the characteristic function of $X$ on $\{0,\dots, s-1\}$.}
$$R = \bigcup_{0<a<b} \ \bigcap_{s>b} C_{a,b,s} \text{  where  }
C_{a,b,s} = \begin{cases} O_{a,b} & \text{ if } A'_b\uhr a = A'_s\uhr a\\
\emptyset & \text{otherwise.}\end{cases}$$
Then $R$ is completely determined.  For any $X \in 3^\omega$, there is 
at most one pair $a,b$ such that $X \in O_{a,b}$, so 
an evaluation map for $X$ in $R$ may safely put zeros 
at every node of $R$ except for the root and the nodes 
of the distinguished subtree $\cap_{s>b} C_{a,b,s}$.  The leaves of 
that subtree can be $X\oplus A$-computably filled out. 
Then the root of $R$ and the root of the subtree 
$\cap_{s>b} C_{a,b,s}$ may 
be non-uniformly supplied with their unique correct values.

Exactly as in the proof of \cite[Theorem 4.5]{DFSW}, we now
show that for any 
infinite partition $p$ of $\omega$ which is homogeneous 
for the coloring defined by $\makeset{R},\makeset{R^c}$, 
the principal function of $p$ dominates the least modulus 
function for $A'$.  For each $i$, let $p_i = \min \{n : p(n) = i\}$
(these are the minimum elements of the blocks of $p$).
First we claim that $p$ is homogeneous for color $R$.  Let 
$s$ be large enough that $A_s'\uhr p_1 = A' \uhr p_1$. 
Let $j$ be large enough that $p_j >s$.  Then the coarsening of 
$p$ which keeps blocks 1 and $j$, while collapsing all other blocks 
in with the zero block, is an element of $R$.  By similar reasoning,
but now looking at the 3-partition of $\omega$ obtained from
$p$ by keeping the only the $i$ and $(i+1)$ blocks
separate from the 0 block, we have $A_{p_{i+1}}'\uhr p_i = A' \uhr p_i$.
Thus $p \geq_T A'$.  
Therefore, $\mathcal M \models \ACA$.

Now suppose that $A \in M$ and $3\cdot 5^e \in \kO^A$.  Suppose that 
for all $d \leq_\kO 3\cdot 5^e$, we have $H_d^A \in M$.  Then 
we claim that $H_{3\cdot 5^e}^A \in M$.  By a result of Jockusch 
\cite{Jockusch1968} discussed in more detail below, 
the hyperarithmetic sets are exactly 
those that can be computed from sufficiently fast-growing 
functions.  As in \cite[Theorem 4.7]{DFSW}, we construct
a Borel coloring which forces any solution to $\BorelDRT^3_2$ 
to compute a sufficiently fast-growing function.  To prove the
associated Borel code is completely determined, we need a more detailed 
analysis than what was given in \cite{DFSW}.

More specifically, Jockusch's result has plenty of uniformity:
there are computable 
functions $h$ and $k$ such that for all $d \in \kO^A$, 
whenever $g:\omega\rightarrow \omega$ dominates the 
increasing function
$$f_d(n) := \Phi_{h(d)}^{H^A_d}(n),$$ we have 
$$\Phi_{k(d)}(A\oplus g) = H_d^A.$$
(To get this from the proof of
\cite[Theorem 6.8]{Jockusch1968}, apply 
\cite[Exercise 16-98]{Rogers_book} to conclude that 
the sets $H_d^A$ are in fact uniformly Turing equivalent to 
implicitly $\Pi^0_1(A)$-definable functions $f_d$.)

Uniformly in $d \in \kO^A$ and $a,b, \in \omega$, and $A$,
 there are Borel codes $C_{a,b,d}$ of well-founded rank 
$d + O(1)$ such that 
$$C_{a,b,d} = \begin{cases} O_{a,b} & \text{ if } b \geq f_d(a) \\
\emptyset & \text{ otherwise.}\end{cases}$$
The uniformity follows from the existence of $h$ above 
and the $A$-uniformity of producing a formula of $L_{\omega_1,\omega}$ 
to assess facts about $H_d^A$ (Proposition \ref{prop:Hformula}).

For each $n<\omega$, let $d_n = \Phi_e(n)$. 
Now let $R$ be the labeled Borel code 
$$R = \bigcup_{0<a<b} \ \bigcap_{i \leq a} C_{a,b,d_i}.$$
For any $X \in 3^\omega$, there is at most one 
pair of $a,b$ such that $X \in O_{a,b}$, so as above, 
any evaluation map for $X$ in $R$ can safely fill in zeros 
everywhere except for the root of $R$ and the distinguished 
subtree rooted at $\cap_{i\leq a}C_{a,b,d_i}$.  This subtree has well-founded 
rank $d_a + O(1)$, so the unique evaluation map on it is 
$H_{d_a + O(1)}^A$-computable.  Because $H_d^A \in M$ for all 
$d\leq_\kO 3\cdot 5^e$, this evaluation map exists in $M$.  
Therefore, $R$ is completely determined in $M$.

Now let $p\in M$ be any infinite partition of $\omega$ 
which is a solution to $\BorelDRT^3_2$ for 
the coloring $\makeset{R}, \makeset{R^c}$.  
Define, for each $i$,
$$p_i = \min\{n : p(n) = i\}.$$
Continuing to copy the proof of \cite[Theorem 4.5]{DFSW}, 
for every $0<s<t$, consider the coarsening $X_{s,t}$ of $p$
obtained by keeping the $s$ and $t$ blocks of $p$
and collapsing all other blocks to 0.  Since $t$ can be chosen 
arbitrarily large, for every $s$ there is a $t$ such that 
$$X_{s,t} \in \bigcap_{i\leq p_s} C_{p_s,p_t,d_i}$$
and therefore $P_3 \circ p$ is monochromatic for color $R$, 
and $s < t$ implies that for all $i\leq p_s$, we have 
$p_t \geq f_{d_i}(p_s)$.  Therefore, $p$ computes a sequence 
of functions $\{g_i : i \in \omega\}$ such that 
for all $i$ and $n$, $g_i(n) \geq f_{d_i}(n)$.  
(Given $i$ and $n$, let $s$ be large enough that $i,n\leq p_s$, 
and output $p_{s+1}$.)  Therefore, $A \oplus p$ computes 
$$\bigoplus_i \Phi_{k(d_i)}(A \oplus g_i) = 
\bigoplus_i H_{d_i}^A = H_{3\cdot 5^e}^A,$$
as was needed.
\end{proof}

We end this section with a question about robustness.  
The formalization of $\BorelDRT^3_2$ given 
above is one we find quite natural.  However, another 
possible way to state the hypothesis of this theorem would 
be 
``Whenever there are Borel codes $T_1,\dots T_\ell$ such that 
for every $X \in k^\omega$, there is an $i$ such that
$X \in \makeset{T_i}$, 
...''

The subtle difference lies in the fact that if 
$X \in \makeset{\cup_{i<\ell} T_i}$, the evaluation map 
for $X$ in that code must also prove that 
$X \in \makeset{T_i}$ or $X \in \makeset{T_i^c}$ for 
each $i < \ell$.  In the slight variant just mentioned, 
it is enough to know that for some $i$, $X \in T_i$ 
(and possibly have no information about $X$ in 
the codes $T_j$ for $j \neq i$.)  This variant 
does not, at least on its face, lead to any conclusion 
about whether, or in what sense, any of 
the $T_i$ must be completely determined.

\begin{question}
How robust is the given formalization of $\BorelDRT^3_2$? 
In particular, is it equivalent to the variant described above?
\end{question}

  \section{Questions}

Several directions of further questions immediately suggest themselves.
Most results here concern $\omega$-models.  It is not immediately 
clear how to formalize the statement ``for every $Z$, there is a 
$\Delta^1_1(Z)$-generic'' in reverse mathematics.  
Once a reasonable reverse mathematics way of 
formalizing these principles is established, it would be natural to ask 
how these principles are related to principles about (completely determined) 
Borel sets.

In the context of $\omega$-models, there are some gaps remaining. 
For example, we have seen that every $\omega$-model of $\DPB$ 
models $\LwCA$ and the existence of $\Delta^1_1$ generics.

\begin{question}
Suppose $M \subseteq 2^\omega$ is closed under join, 
satisfies $\LwCA$, and for every $Z \in M$, there is a 
$G \in M$ that is $\Delta^1_1(Z)$-generic.  Does it follow 
that $\mathcal M \models \DPB$?
\end{question}

One way that the above question could have a negative answer 
would be if $\DPB$ implied some theory of hyperarithmetic 
analysis strictly stronger than $\LwCA$.  

\begin{question}
Which theorems of hyperarithmetic analysis are implied by 
$\DPB$, and which are incomparable with it?
\end{question}

We built an $\omega$-model of $\DPB$ by adjoining 
many mutually $\Sigma^1_1$-generics.

\begin{question}
Does every $\omega$-model of $\DPB$ contain a $\Sigma^1_1$-generic?
\end{question}

Whether in $\omega$-models or full reverse mathematics, many other theorems 
involving Borel sets may now have interesting reverse mathematics content 
when considering their completely determined versions.  We leave
the similar analysis of
``Every completely determined Borel set 
is measurable'' to future work.
We mention that the 
statement ``Every completely determined Borel set has the perfect set property''
is equivalent to $\ATR$, because
``Every closed set has the perfect set property'' already 
implies $\ATR$ by \cite[V.5.5]{sosa}, 
so here the way of defining a Borel set does 
not add additional strength.

Turning now to $\BorelDRT^3_\ell$, we have seen that any $\omega$-model 
of it is closed under hyperarithmetic reduction.  
\begin{question}
Is $\BorelDRT^3_\ell$ a theory of hyperarithmetic analysis?
\end{question}
For any instance of $\BorelDRT^3_\ell$
that is truly well-founded, there is a solution hyperarithmetic in the instance.
However, we do not know anything about the complexity of solutions to 
non-standard instances of $\BorelDRT^3_\ell$.
In particular, we do not know if $\BorelDRT^3_\ell$ holds in $HYP$.

Finally, there is the issue of robustness.  There are some 
possible variations on what could be 
considered as an evaluation map.  For example, a weaker version 
of an evaluation map would be a partial function 
$f:\subseteq T\rightarrow \{0,1\}$ such that $f(\lambda)$ is 
defined; and whenever $\sigma \in T$ is a $\cup$, and $f(\sigma)=1$,
there is an $n$ such that $f(\sigma\concat n) = 1$;
and whenever $\sigma \in T$ is a $\cap$ and $f(\sigma) = 1$, 
for all $n$, $\sigma\concat n \in T$ implies $f(\sigma\concat n) = 1$;
 and similarly for when $f(\sigma) = 0$.  
 Such a partial function has a natural interpretation as a 
 winning strategy in the game in which one player tries to 
 prove that a real is in the given Borel set while 
 another player tries to prove that it is out.  We have 
used the longer name ``completely determined Borel set'' for 
 our notion in order to reserve the term ``determined Borel set'' for 
 this variant.  We did not investigate,
but it would be interesting to know, the extent to which the results 
of this paper are robust under this and other variations on when we 
consider Borel set to be well-defined in reverse mathematics.

\bibliographystyle{alpha}
\bibliography{bib_DPB}

\end{document}